\documentclass[11pt]{article}

\usepackage[a4paper,margin=1in]{geometry}
\usepackage{amsmath,amssymb,amsthm,mathtools}
\usepackage{enumitem}
\usepackage{microtype}
\usepackage{graphicx}
\usepackage{tikz}
\usetikzlibrary{positioning,calc}
\usepackage[hidelinks]{hyperref}
\usepackage[nameinlink,capitalize,noabbrev]{cleveref}

\newtheorem{theorem}{Theorem}[section]
\newtheorem{proposition}[theorem]{Proposition}
\newtheorem{lemma}[theorem]{Lemma}
\newtheorem{corollary}[theorem]{Corollary}
\theoremstyle{definition}
\newtheorem{definition}[theorem]{Definition}
\theoremstyle{remark}
\newtheorem{remark}[theorem]{Remark}

\title{Uniform High-Frequency Localization on Quantum Graphs}
\author{Binh T. Nguyen\\
Faculty of Mathematics and Computer Science, University of Science,\\
Vietnam National University Ho Chi Minh City, Ho Chi Minh City, Vietnam\\
\texttt{ngtbinh@hcmus.edu.vn}}
\date{}

\begin{document}
\maketitle

\begin{abstract}
We study uniform high-frequency localization for the Laplacian on compact
metric graphs through the least \(L^2\)-mass that eigenfunctions must place in
a prescribed measurable observation set. We first identify this asymptotic
localization constant with the minimum of a linear functional over the
attainable edge-intensity set; in the generic standard-Kirchhoff setting,
this set is governed by the regular Gauss image of the secular manifold.
Primitive cycles and exterior-to-exterior paths consequently determine the
positivity threshold, but not, in general, the positive numerical value. We
introduce a boundary-aware singular-completion cone and prove that it contains
all regular secular edge-energy vectors for trees, unicyclic graphs, and
closed graphs of cycle rank two. We then construct a cycle-rank-two graph
with two Dirichlet leaves for which this completion principle fails. An exact
rational separator, combined with a validated Krawczyk enclosure, yields a
nonsingular scalar secular state lying outside every boundary-compatible
singular sector. A positive radial derivative identity and recurrence in the
compact orbit closure convert this local separation into an exact
high-frequency eigensequence for a single fixed metric. For a suitable
measurable observation set, the true high-frequency localization constant
\(C_\infty(\omega;\ell)\) and its singular-completion counterpart
\(C_{\mathrm{sing}}(\omega;\ell)\) satisfy
$$
C_\infty(\omega;\ell)<\frac12<C_{\mathrm{sing}}(\omega;\ell).
$$
Thus, singular completion captures the quantitative localization geometry in
several low-complexity classes but does not, in general, determine the
high-frequency variational problem on a fixed quantum graph.
\end{abstract}

\paragraph{Keywords.}
quantum graphs; spectral geometry; high-frequency eigenfunctions;
semiclassical measures; secular manifold; eigenfunction localization.

\paragraph{Mathematics Subject Classification (2020).}
Primary: 81Q35, 34B45; Secondary: 35P20, 35P25.

\section{Introduction}
\label{sec:introduction}

Let \(G\) be a finite compact metric graph with edge-length vector
\(\ell=(\ell_e)_{e\in E}\), standard Kirchhoff conditions at internal
vertices, and prescribed Dirichlet or Neumann conditions at exterior
endpoints.  For a measurable observation set \(\omega\subset G\), define
\begin{equation}
C_\infty(\omega;\ell)
:=
\lim_{\Lambda\to\infty}
\inf_{\substack{
-\Delta_{G,\ell}u=\lambda u\\
\lambda\ge\Lambda,\ \|u\|_{L^2(G)}=1}}
\int_\omega |u|^2.
\label{eq:intro-Cinf}
\end{equation}
This is the least asymptotic fraction of \(L^2\)-mass that high-frequency
eigenfunctions are forced to place in \(\omega\).  In the language of
mathematical physics, it is a fixed-system high-energy localization
observable: the metric graph is held fixed while the spectral parameter tends
to infinity.

There are two different questions hidden in
\eqref{eq:intro-Cinf}.  The first is qualitative:
when is \(C_\infty(\omega;\ell)>0\)?  The second is quantitative:
when the constant is positive, what spectral geometry determines its value?
The distinction is essential on quantum graphs.  Minimal-support scars can
detect whether the constant vanishes, while the minimizing positive
high-frequency intensity can have larger support and need not arise from a
single primitive scar.  The purpose of this paper is to identify the quantitative spectral geometry
behind this distinction, determine low-complexity regimes in which singular
states already control the variational problem, and exhibit a fixed quantum
graph on which interior regular secular geometry produces strictly stronger
high-energy localization than every boundary-compatible singular
completion.

\subsection{High-frequency intensities and the quantitative problem}

On each edge \(e\), an eigenfunction with frequency \(k\) can be written as
\[
u_e(x)=A_e\cos(kx)+B_e\sin(kx).
\]
We associate with \(u_e\) the homogenized edge intensity
\begin{equation}
q_e:=\frac{|A_e|^2+|B_e|^2}{2}.
\label{eq:intro-q}
\end{equation}
For a measurable observation set \(\omega\subset G\), let
\[
\alpha_e:=|\omega\cap e|.
\]
Oscillatory averaging shows that, in the high-frequency limit, the
\(L^2\)-mass observed on \(\omega\) is represented by the linear functional
\(q\mapsto\alpha\cdot q\). This motivates the attainable intensity set
\[
I(\ell)\subset
P_\ell:=
\{q\in\mathbb{R}_{\geq 0}^{E}:\ell\cdot q=1\}.
\]
Writing \(C_\infty(\omega;\ell)\) for the uniform high-frequency localization
constant, \cref{thm:exact-variational-principle} yields the exact
finite-dimensional variational formula
\begin{equation}
C_\infty(\omega;\ell)
=
\min_{q\in I(\ell)}\alpha\cdot q.
\label{eq:intro-variational}
\end{equation}
Thus, the high-frequency localization problem is reduced to the geometry of
the attainable intensity set \(I(\ell)\).

In the generic standard-Kirchhoff setting, the description of semiclassical
measures due to Colin de Verdi\`ere \cite{CdV2015} identifies \(I(\ell)\) with
the closure of the normalized regular Gauss image of the secular determinant
manifold; see \cref{cor:generic-gauss-characterization}. Its minimal-support
directions are represented by simple cycles and exterior-to-exterior paths,
which we refer to as primitive supports. These supports determine precisely
whether the localization constant vanishes: in the same generic setting,
\cref{cor:primitive-positivity} gives
\begin{equation}
C_\infty(\omega;\ell)>0
\quad\Longleftrightarrow\quad
|\omega\cap P|>0
\quad\text{for every primitive support }P.
\label{eq:intro-primitive-positivity}
\end{equation}
They do not, however, determine the positive value of
\(C_\infty(\omega;\ell)\) in general. Indeed, the parallel-edge examples of
\cref{sec:primitive-supports} exhibit regular full-support intensities whose
observation ratios are strictly smaller than the corresponding ratios on
every primitive support.

\subsection{Boundary-aware singular completion}

The preceding examples show that primitive supports alone do not determine
the positive value of the high-frequency localization constant. This
motivates a larger comparison class arising from singular points of the
secular problem. At such points, the admissible edge phases depend on the
exterior boundary conditions: internal--internal and Dirichlet-terminal
nodal modes occur at integer multiples of \(\pi\), whereas Neumann-terminal
nodal modes occur at half-integer multiples of \(\pi\). For each
boundary-compatible signed Kirchhoff sector \(\sigma\), the corresponding
singular amplitude vector \(A\) satisfies
\[
B_{\sigma,\mathrm{bc}}A=0.
\]
We therefore define the boundary-aware singular-completion cone by
\begin{equation}
\mathcal M_G^{\mathrm{bc}}
:=
\operatorname{cone}
\bigcup_{\sigma\in\Sigma_{\mathrm{bc}}}
\left\{
A^{\circ 2}:A\in\ker B_{\sigma,\mathrm{bc}}
\right\},
\label{eq:intro-singular-cone}
\end{equation}
where \(\Sigma_{\mathrm{bc}}\) denotes the finite family of
boundary-compatible singular sectors and \(A^{\circ 2}\) denotes the
componentwise squared amplitude vector. In particular, the energy vectors
associated with primitive cycle and exterior-to-exterior scars belong to
\(\mathcal M_G^{\mathrm{bc}}\).

For a measurable observation set \(\omega\subset G\), with
\(\alpha_e=|\omega\cap e|\), we associate with
\(\mathcal M_G^{\mathrm{bc}}\) the singular-completion variational constant
\begin{equation}
C_{\mathrm{sing}}(\omega;\ell)
:=
\min_{\substack{
s\in\mathcal M_G^{\mathrm{bc}}\\
\ell\cdot s=1}}
\alpha\cdot s.
\label{eq:intro-singular-value}
\end{equation}
The structural question underlying this relaxation is whether every regular
scalar secular edge-energy vector admits such a singular completion. Writing
\begin{equation}
\mathcal R_G^{\mathrm{reg}}
:=
\operatorname{cone}
\left\{
\bigl(|A_e|^2+|B_e|^2\bigr)_{e\in E}:
(A,B)\ \text{arises from a regular scalar secular state}
\right\},
\label{eq:intro-regular-energy-set}
\end{equation}
this amounts to asking whether
\begin{equation}
\mathcal R_G^{\mathrm{reg}}
\stackrel{?}{\subseteq}
\mathcal M_G^{\mathrm{bc}}.
\label{eq:intro-central-inclusion}
\end{equation}
For an individual regular state, its edge-energy vector
\(r=(r_e)_{e\in E}\) satisfies
\[
r_e=|A_e|^2+|B_e|^2=2q_e,
\]
where \(q\) is the corresponding homogenized intensity.

If \eqref{eq:intro-central-inclusion} holds, every linear observation
functional evaluated on a regular secular energy vector is bounded below by
the corresponding singular-completion variational problem. The inclusion
itself, however, is a statement about secular geometry and does not by itself
identify the singular-completion value with the fixed-metric high-frequency
constant \(C_\infty(\omega;\ell)\); the latter additionally requires an
accessibility argument for the relevant regular secular states.

\subsection{Low-complexity sufficiency and the six-terminal obstruction}

The first part of the structural theory is positive.
Section~\ref{sec:low-complexity-sufficiency} proves
\eqref{eq:intro-central-inclusion} for three classes:
trees, unicyclic graphs, and closed cycle-rank-two graphs.  The common mechanism is a
positive-semidefinite covariance decomposition.  Local Kirchhoff data can be
realized by zero-sum vector flows on a cut tree, and sufficiently low
cycle--boundary complexity forces those vector flows to decompose into
boundary-compatible parity sectors.

To test the limit of this mechanism in the presence of exterior flux, we
cut the two cycles of a particular cycle-rank-two graph and obtain a crossed
six-terminal tree.  At the covariance
level, an exact rational separator distinguishes a regular positive
semidefinite covariance from every parity-supported covariance.  Since a
covariance witness need not be generated by one scalar quantum-graph state,
this separation alone is not enough.  We therefore reconstruct the scalar
graph, solve its nonlinear endpoint matching equations, and validate the
solution by a Krawczyk enclosure.  The resulting phase point is nonsingular,
its secular kernel is one dimensional, and its physical seven-edge energy
\(r^\star\) satisfies
\begin{equation}
\widehat d\cdot r^\star<0,
\label{eq:intro-regular-negative}
\end{equation}
where the exact physical separator is
\begin{equation}
\widehat d
=
\left(
\frac{117}{25},
\frac{106}{25},
\frac35,
\frac{109}{50},
\frac{33}{20},
-\frac{149}{100},
-\frac{13}{100}
\right).
\label{eq:intro-dhat}
\end{equation}
By contrast,
\begin{equation}
\widehat d\cdot s>0
\qquad
\forall\,0\ne s\in\mathcal M_G^{\mathrm{bc}}.
\label{eq:intro-singular-positive}
\end{equation}
Consequently,
\[
\mathcal R_G^{\rm reg}
\not\subseteq
\mathcal M_G^{\mathrm{bc}}
\]
for this graph.  This is an actual scalar secular obstruction, not merely a
failure of a covariance relaxation.

\subsection{Main theorem: fixed-metric breakdown}

A separated secular point is still not a high-frequency counterexample:
\(C_\infty\) is defined using exact eigenfunctions of one fixed metric at
frequencies tending to infinity.  Section~\ref{sec:fixed-metric-counterexample}
closes this final gap.
Choose a positive \(2\pi\)-lattice lift
$
\ell=\vartheta+2\pi N
$
of a separated regular phase point \(\vartheta\).  Then, \(k=1\) is already an
exact regular eigenfrequency for the fixed metric \(\ell\).  The crucial
radial identity is
\begin{equation}
\phi^*
\frac{d}{dk}M(k\ell)\bigg|_{k=1}\phi
=
\sum_{e\in E}\ell_e r_e
>0.
\label{eq:intro-radial-identity}
\end{equation}
Hence, the simple secular zero is transverse to the fixed radial phase flow.
Recurrence in the compact orbit closure of
\(k\mapsto k\ell\bmod2\pi\), followed by a one-dimensional transverse
correction, produces exact eigenfrequencies \(k_n\to\infty\) with
$
k_n\ell\bmod2\pi\longrightarrow\vartheta.
$
The corresponding normalized intensities converge to the separated regular
intensity.
This yields the main negative theorem of the paper.
\begin{theorem}[Fixed-metric breakdown of singular completion]
\label{thm:intro-main-breakdown}
There exist a compact cycle-rank-two metric graph \(G\), with standard
Kirchhoff conditions at its internal vertices and Dirichlet conditions at two
exterior endpoints, a fixed metric
\(\ell\in(0,\infty)^7\), and a measurable observation set
\(\omega\subset G\) such that
\begin{equation}
ed{
C_\infty(\omega;\ell)
<
C_{\rm sing}(\omega;\ell).
}
\label{eq:intro-main-gap}
\end{equation}
More precisely, one may choose the observation lengths in the form
\begin{equation}
\alpha
=
\frac12\ell+\varepsilon\widehat d,
\qquad \varepsilon>0,
\label{eq:intro-alpha}
\end{equation}
with \(0<\alpha_e<\ell_e\) for every edge, so that
\begin{equation}
C_\infty(\omega;\ell)
<
\frac12
<
C_{\rm sing}(\omega;\ell).
\label{eq:intro-half-gap}
\end{equation}
Moreover, the upper bound on \(C_\infty\) is realized asymptotically by an
exact eigensequence \(k_n\to\infty\) whose regular intensities converge to the
validated separated secular intensity.
\end{theorem}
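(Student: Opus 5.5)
The plan is to combine three ingredients the paper has already set up: the exact variational principle (\cref{thm:exact-variational-principle}), the validated separated secular state \(r^\star\) with \(\widehat d\cdot r^\star<0\), and the strict positivity \(\widehat d\cdot s>0\) on \(\mathcal M_G^{\mathrm{bc}}\setminus\{0\}\). The graph \(G\) is the seven-edge cycle-rank-two graph with two Dirichlet leaves from the six-terminal construction. The validated phase point \(\vartheta\) is nonsingular, and its secular kernel is spanned by a vector \(\phi\) whose edge energies are \(r^\star\).

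First I fix the metric. I choose \(N\in\mathbb Z_{\ge0}^7\) with \(\ell=\vartheta+2\pi N\) and all \(\ell_e>0\), taking \(N\) large enough to leave room for the observation set. Then \(k=1\) is a regular eigenfrequency of \(-\Delta_{G,\ell}\).

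Next comes accessibility. The radial identity \eqref{eq:intro-radial-identity} gives \(\phi^*\partial_kM(k\ell)|_{k=1}\phi=\sum_e\ell_er_e^\star>0\). The zero is simple, so by a one-dimensional Lyapunov--Schmidt reduction (or the implicit function theorem applied to the simple eigenvalue branch of \(M\)), the secular zero set near \(\vartheta\) is a smooth hypersurface transverse to the direction \(\ell\). The orbit \(k\mapsto k\ell \bmod 2\pi\) returns arbitrarily close to \(\vartheta\) at arbitrarily large times \(k=1+2\pi m\) with \(m\ell\) near \(0\bmod 2\pi\). This holds by recurrence of the linear flow on the compact orbit closure, or simply because integer multiples of \(2\pi\) return \(\ell\) exactly modulo \(2\pi\), since \(\ell\equiv\vartheta\). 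Indeed, \(k_n=1+n\) with \(n\in\mathbb Z\) gives \(k_n\ell=\vartheta+n\vartheta+\dots\), which is not exact in general, so I do need recurrence of \(n\ell\bmod2\pi\) near \(0\). At each such near-return, the transverse correction produces an exact zero \(k_n\to\infty\) with \(k_n\ell\bmod2\pi\to\vartheta\). The kernel vectors converge to \(\phi\), so the normalized intensities converge to \(q^\star=r^\star/(2\,\ell\cdot q\text{-normalization})\), i.e.\ \(q^\star=r^\star/(\ell\cdot r^\star)\), which lies in \(I(\ell)\) by closedness. The step I expect to be the main obstacle is making the transverse correction uniform along the sequence. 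I would handle it by applying the implicit function theorem once, on a fixed neighbourhood of \(\vartheta\) in the torus, with parameter the phase vector, so that all quantitative bounds depend only on the validated enclosure.

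Now I choose \(\omega\). I set \(\alpha=\tfrac12\ell+\varepsilon\widehat d\) with \(\varepsilon>0\) small enough that \(0<\alpha_e<\ell_e\), and let \(\omega\) contain an interval of length \(\alpha_e\) in each edge \(e\). The variational formula then gives
\[
C_\infty(\omega;\ell)\le\alpha\cdot q^\star=\tfrac12+\varepsilon\,\frac{\widehat d\cdot r^\star}{\ell\cdot r^\star}<\tfrac12 .
\]
For the other side, take any \(s\in\mathcal M_G^{\mathrm{bc}}\) with \(\ell\cdot s=1\). Then \(\alpha\cdot s=\tfrac12+\varepsilon\,\widehat d\cdot s>\tfrac12\). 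The set \(\{s\in\mathcal M_G^{\mathrm{bc}}:\ell\cdot s=1\}\) is compact, since the cone is closed (a finite union of images of quadratic maps on kernels, which I will verify) and pointed in the nonnegative orthant. So the minimum is attained and \(C_{\mathrm{sing}}>\tfrac12\), which gives \eqref{eq:intro-half-gap}. Finally, the eigensequence \(k_n\) from the accessibility step realizes the upper bound asymptotically.
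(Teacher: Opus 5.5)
Your proposal is correct and follows essentially the same route as the paper. The steps match: a positive $2\pi$-lattice lift $\ell=\vartheta+2\pi N$ of the validated point, radial transversality from $\phi^*\partial_kM\phi=\sum_e\ell_er_e>0$, recurrence of the phase flow to $\vartheta$ on the compact orbit closure, then a uniform one-dimensional transverse correction. On the singular side, your bound via attainment of the minimum on the compact normalized slice is equivalent to the paper's constant $\gamma>0$, and \cref{prop:Csing-cone} already gives attainment, so your closedness check is unnecessary. The one blemish is the recurrence paragraph: the ``exact return'' shortcut and the $k=1+2\pi m$ phrasing are wrong as written, but the version you settle on (recurrence of $n\ell$, or $t\ell$, modulo $2\pi$ near $0$) is exactly what is needed and holds for every $\ell$.
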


\begin{proof}
This is \cref{thm:fixed-metric-breakdown}; the construction and recurrence
argument are given in Section~\ref{sec:fixed-metric-counterexample}.
\end{proof}

The theorem gives a precise limit to the singular-completion principle:
singular completion is a powerful quantitative enlargement of primitive
scars and is sufficient throughout the low-complexity regimes proved here,
but it is not a universal variational description of uniform localization on
compact quantum graphs.

\subsection{Contributions}

The contributions of this paper can be listed as follows. 

\paragraph{a. Exact variational reduction.}
For every fixed metric and measurable observation set, the high-frequency
constant is the minimum of the edgewise observation functional over the
attainable intensity set; see
\cref{thm:exact-variational-principle}.  In particular, the observation set
enters the asymptotic problem only through its edgewise measures.

\paragraph{b. Positivity is simpler than the positive value.}
Primitive cycles and exterior-to-exterior paths determine the positivity
threshold in the generic secular setting, but explicit parallel-edge graphs
show that the positive numerical value can be strictly smaller than every
primitive-support benchmark.

\paragraph{c. Boundary-aware singular completion.}
The singular cone
\(\mathcal M_G^{\mathrm{bc}}\) incorporates the correct Dirichlet/Neumann
phase classes, is invariant under switching, contains all
boundary-compatible primitive scars, and admits generalized-eigenvalue and
dual-semidefinite formulations.

\paragraph{d. Low-complexity sufficiency.}
By \cref{cor:low-complexity-singular-sufficiency}, every regular scalar
secular edge-energy vector belongs to the singular-completion cone for trees, unicyclic graphs, and closed cycle-rank-two graphs.  This is a geometric inclusion theorem.  An
identity with \(C_\infty\) additionally requires the relevant singular
minimizer to be accessible along the fixed metric flow.

\paragraph{e. A physically realizable six-terminal escape.}
The six-terminal covariance obstruction is upgraded in Section~7 to a
validated scalar regular secular point.  The nine-coordinate cut-tree
separator is pushed down exactly to the seven physical edges as
\(\widehat d\), eliminating any distinction between the finite-dimensional
certificate and the physical observation functional.

\paragraph{f. Fixed-metric high-frequency realization.}
The radial identity \eqref{eq:intro-radial-identity} and compact
orbit-closure recurrence turn the separated regular point into an exact
high-frequency eigensequence for one fixed metric, yielding
\eqref{eq:intro-main-gap}.

\subsection{Related work}

The problem considered here belongs to the spectral and semiclassical analysis
of compact quantum graphs.  For background on self-adjoint realizations,
vertex conditions, and the basic spectral formalism, we refer to
\cite{KostrykinSchrader1999,Kuchment2004,BerkolaikoKuchment2013}; the
periodic-orbit and scattering formulations that underlie much of the quantum-
graph literature were developed in particular in
\cite{KottosSmilansky1999,GnutzmannSmilansky2006}.  Our asymptotic parameter,
however, is the eigenfrequency of one fixed compact metric graph, rather than
the number of edges or a varying family of graphs.  This fixed-system
viewpoint is what makes the recurrence argument in Section~\ref{sec:fixed-metric-counterexample}
essential: a separated regular phase must be approached by exact
eigenfrequencies of the same metric.

The geometric starting point is the description of semiclassical measures by
Colin de Verdi\`ere \cite{CdV2015}, where the regular part of the determinant
manifold and its Gau\ss{} map encode the attainable edge intensities.  The
regular secular geometry is closely related to generic simplicity and
non-vanishing questions for quantum-graph eigenfunctions; see
\cite{BerkolaikoLiu2017,PlumerTaufer2021}.  These works help distinguish the
regular secular states used here from eigenfunctions tied to singular or
special phase configurations.  Our use of the determinant-manifold viewpoint
is quantitative: we minimize an arbitrary edgewise observation functional
over the full attainable intensity set.  Primitive cycles and
exterior-to-exterior paths therefore determine the zero-versus-positive
threshold, but need not determine the value of the constant once it is
positive.

Localization and scarring on quantum graphs have been studied from several
other perspectives.  Berkolaiko and Winn \cite{BerkolaikoWinn2018} construct
maximally scarred eigenfunctions for particular non-Kirchhoff scattering
matrices on star graphs.  Harrell and Maltsev
\cite{HarrellMaltsev2018,HarrellMaltsev2020} develop Agmon and landscape
methods for localization of Schr\"odinger eigenfunctions on quantum graphs,
while Kravitz et al. \cite{KravitzBrioCaputo2023} analyze exact
resonant localized eigenvectors on tuned metric graphs.  Harrell and Maltsev
\cite{HarrellMaltsev2024} further relate topological bound states to
singularities of secular formulations.  The present problem is different in
that the observation set is fixed and measurable, and the object of interest
is the least asymptotic mass carried by exact high-frequency eigenfunctions
of one fixed graph.

A complementary literature concerns delocalization and eigenfunction
statistics for large graphs.  Quantum-ergodic behavior and eigenfunction
statistics were investigated in \cite{GnutzmannKeatingPiotet2008,GnutzmannKeatingPiotet2010},
and rigorous quantum-ergodicity results for large equilateral and expanding
quantum graphs were established in
\cite{AnantharamanIngremeauSabriWinn2021,IngremeauSabriWinn2020}.  Those
results concern graph-sequence limits and spatial equidistribution under
suitable hypotheses.  They are therefore complementary to the fixed-graph
optimization problem considered here, where nontrivial limiting intensity
geometry can persist at arbitrarily high frequency.

There is also a natural connection with observability and control.  Egidi et al. \cite{EgidiMugnoloSeelmann2024} obtain
Logvinenko--Sereda-type estimates and applications to quantum graphs with
measurable distributed control sets, while Duca \cite{Duca2020} studies exact
controllability for bilinear Schr\"odinger dynamics on compact graphs.  More
recently, Ammari et al. 
\cite{AmmariDucaJolyLeBalch2025} introduced the graph geometric control
condition (GGCC): for the wave equation with internal controls and Dirichlet
exterior conditions it gives a necessary and sufficient criterion, whereas
for the Schr\"odinger equation and mixed boundary conditions it is sufficient
but not necessary.  The relation to the present work is primarily
qualitative.  Both settings reveal cycle and path mechanisms that permit
high-frequency mass to avoid an observation region, but here the objective is
the exact asymptotic observation constant and the geometry of the intensity
set that determines it.



\section{High-frequency intensity geometry and the variational principle}
\label{sec:variational}

\subsection{Global conventions}
\label{subsec:global-conventions}

We use three related but distinct edgewise objects throughout the paper.
For an edge expansion
\[
u_e(x)=A_e\cos(kx)+B_e\sin(kx),
\]
the \emph{homogenized intensity} is
\[
q_e:=\frac{|A_e|^2+|B_e|^2}{2},
\]
whereas the \emph{unnormalized edge energy} is
\[
r_e:=|A_e|^2+|B_e|^2=2q_e.
\]
The high-frequency variational problem is stated in \(q\)-coordinates and
uses the normalization \(\ell\cdot q=1\).  The structural secular and
singular-cone arguments are stated in homogeneous \(r\)-coordinates.
Because all cone comparisons and Rayleigh quotients are homogeneous, the
factor \(2\) never changes a separator sign or a normalized variational
value.

For an observation set \(\omega\), we write
\[
\alpha_e:=|\omega\cap e|,
\qquad
D_\ell:=\operatorname{diag}(\ell_e),
\qquad
W_\alpha:=\operatorname{diag}(\alpha_e).
\]
The notation \(I(\ell)\) is reserved for attainable normalized
high-frequency intensities \(q\); \(\mathcal R_G^{\rm reg}\) is reserved for
unnormalized regular scalar secular edge energies \(r\);
\(\mathcal M_G^{\rm bc}\) denotes the boundary-aware singular-completion
cone in the same \(r\)-coordinates.  Covariance matrices \(Q\) and their
tree measurements \(H_T(Q)\) are used only in the cut-tree relaxation of
Sections~6--7.

Throughout this section, \(G=(V,E)\) is a finite compact metric graph with edge
lengths \(\ell=(\ell_e)_{e\in E}\), and \(-\Delta_{G,\ell}\) is a self-adjoint
metric-graph Laplacian with standard Kirchhoff conditions at interior vertices
and fixed self-adjoint separated conditions at exterior vertices. The
variational statements below use only the one-dimensional form of an
eigenfunction on each edge and therefore apply to this general boundary
setting. The generic Gauss-map corollary at the end of the section is stated
separately under the standard Kirchhoff hypotheses of Colin de Verdi\`ere \cite{CdV2015}.

\subsection{Edge intensities and oscillatory averaging}

Let \(u\) be an eigenfunction with eigenvalue \(k^2>0\). After orienting each
edge and identifying it with \([0,\ell_e]\), we write
\begin{equation}
u_e(x)=A_e\cos(kx)+B_e\sin(kx),
\qquad 0\le x\le\ell_e,
\label{eq:edge-expansion}
\end{equation}
with \(A_e,B_e\in\mathbb C\). The quantity in the following definition is
invariant under a change of the origin on the edge, since such a change rotates
the coefficient pair \((A_e,B_e)\) by a real orthogonal matrix.

\begin{definition}[Homogenized edge intensity]
For an eigenfunction \(u\) of frequency \(k>0\), define
\begin{equation}
q_e(u):=\frac{|A_e|^2+|B_e|^2}{2},
\qquad
q(u):=(q_e(u))_{e\in E}\in[0,\infty)^E.
\label{eq:edge-intensity}
\end{equation}
\end{definition}
The next elementary estimate is the normalization input needed later. It is
important that its conclusion is uniform over all normalized eigenfunctions at
a fixed large frequency.

\begin{lemma}[Uniform coefficient bounds]
\label{lem:uniform-coefficient-bounds}
Let \(u\) satisfy
\(-\Delta_{G,\ell}u=k^2u\) and \(\|u\|_{L^2(G)}=1\).
Then, for every edge \(e\),
\begin{equation}
\int_0^{\ell_e}|u_e(x)|^2\,dx
=
\ell_e q_e(u)+R_e(u,k),
\qquad
|R_e(u,k)|\le \frac{2}{k}q_e(u).
\label{eq:edge-mass-estimate}
\end{equation}
Consequently, if \(k\ge4/\ell_{\min}\), where
\(\ell_{\min}:=\min_{e\in E}\ell_e\), then
\begin{equation}
0\le q_e(u)\le \frac{2}{\ell_e}
\qquad(e\in E),
\label{eq:qe-bound}
\end{equation}
and
\begin{equation}
\sum_{e\in E}\ell_e q_e(u)=1+O(k^{-1}),
\label{eq:normalization-asymptotic}
\end{equation}
where the implicit constant depends only on \(G\) and \(\ell\).
\end{lemma}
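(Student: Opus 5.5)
The plan is to compute the edge mass explicitly and then sum over edges. Expanding the square and using \(\cos^2 = \tfrac12(1+\cos 2kx)\), \(\sin^2=\tfrac12(1-\cos 2kx)\) and \(\cos\sin=\tfrac12\sin 2kx\), we get
\[
|u_e(x)|^2=\frac{|A_e|^2+|B_e|^2}{2}+\frac{|A_e|^2-|B_e|^2}{2}\cos(2kx)+\operatorname{Re}(A_e\overline{B_e})\sin(2kx).
\]
Integrating over \([0,\ell_e]\), the first term gives \(\ell_e q_e(u)\). The remainder is
\[
R_e=\frac{|A_e|^2-|B_e|^2}{4k}\sin(2k\ell_e)+\frac{\operatorname{Re}(A_e\overline{B_e})}{2k}\bigl(1-\cos(2k\ell_e)\bigr).
\]
We bound its two pieces separately. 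The first satisfies \(\bigl||A_e|^2-|B_e|^2\bigr|\le 2q_e\), so it is at most \(q_e/(2k)\). The second satisfies \(|\operatorname{Re}(A_e\overline{B_e})|\le q_e\) and \(|1-\cos|\le 2\), so it is at most \(q_e/k\). Together these give \(|R_e|\le \tfrac{3}{2k}q_e\le \tfrac{2}{k}q_e\), which is \eqref{eq:edge-mass-estimate}. This step uses nothing about the vertex conditions, consistent with the paper's remark that only the one-dimensional edge form is used. Complex coefficients cause no difficulty, because only moduli and \(\operatorname{Re}(A\bar B)\) appear.

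For \eqref{eq:qe-bound}, we combine the edge estimate with the normalization. Since \(\|u\|=1\), each edge mass is at most \(1\), so \(\ell_e q_e - \tfrac{2}{k}q_e\le 1\). When \(k\ge 4/\ell_{\min}\) we have \(\tfrac2k\le \tfrac{\ell_{\min}}{2}\le \tfrac{\ell_e}{2}\). Hence \(\tfrac{\ell_e}{2}q_e\le 1\), that is, \(q_e\le 2/\ell_e\). Nonnegativity of \(q_e\) is immediate.

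For \eqref{eq:normalization-asymptotic}, we sum \eqref{eq:edge-mass-estimate} over \(e\in E\) and obtain \(1=\sum_e\ell_e q_e+\sum_e R_e\). We then bound the error using the previous step: \(|\sum_e R_e|\le \tfrac{2}{k}\sum_e q_e\le \tfrac{4}{k}\sum_e \ell_e^{-1}\). The constant \(4\sum_e\ell_e^{-1}\) depends only on \(G\) and \(\ell\), which gives the \(O(k^{-1})\) claim with uniformity over all normalized eigenfunctions.

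None of these steps is a real obstacle. The only point needing care is the order of the argument: the a priori bound \eqref{eq:qe-bound} must be derived from the single-edge estimate before summing. Otherwise the remainder could not be controlled uniformly, since \(\sum q_e\) is not normalized in advance.
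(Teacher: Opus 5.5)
Your proof is correct and follows essentially the same route as the paper. Like the paper, you expand $|u_e|^2$ into the mean $q_e$ plus the $\cos(2kx)$ and $\sin(2kx)$ oscillations, bound the two oscillatory integrals by $1/(2k)$ and $1/k$ with coefficients at most $q_e$ (so $|R_e|\le 3q_e/(2k)\le 2q_e/k$), use $\ell_e-2/k\ge\ell_e/2$ for the a priori bound on $q_e$, and then sum over edges, with your explicit constant $4\sum_e\ell_e^{-1}$ being what that summation step yields.
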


\begin{proof}
From \eqref{eq:edge-expansion},
\[
|u_e(x)|^2
=
q_e(u)
+
\frac{|A_e|^2-|B_e|^2}{2}\cos(2kx)
+
\operatorname{Re}(A_e\overline{B_e})\sin(2kx).
\]
The two oscillatory integrals over \([0,\ell_e]\) have absolute values at most
\(1/(2k)\) and \(1/k\), respectively. Moreover,
\[
\frac{\bigl||A_e|^2-|B_e|^2\bigr|}{2}
\le q_e(u),
\qquad
|\operatorname{Re}(A_e\overline{B_e})|
\le q_e(u).
\]
This gives \eqref{eq:edge-mass-estimate} with the stated slightly non-optimal
constant. For \(k\ge4/\ell_{\min}\),
\[
\int_0^{\ell_e}|u_e|^2
\ge
\left(\ell_e-\frac{2}{k}\right)q_e(u)
\ge
\frac{\ell_e}{2}q_e(u).
\]
Since the left-hand side is at most \(1\), we obtain
\eqref{eq:qe-bound}. Summing \eqref{eq:edge-mass-estimate} over the finite edge
set and using \eqref{eq:qe-bound} gives
\eqref{eq:normalization-asymptotic}.
\end{proof}

\begin{lemma}[Averaging on arbitrary measurable edge sets]
\label{lem:measurable-averaging}
Let \(W_e\subset[0,\ell_e]\) be measurable and
\(\alpha_e:=|W_e|\). If \(u_n\) is any sequence of normalized eigenfunctions
with frequencies \(k_n\to\infty\), then
\begin{equation}
\int_{W_e}|u_{n,e}(x)|^2\,dx
-
\alpha_e q_e(u_n)
\longrightarrow 0
\qquad(e\in E).
\label{eq:measurable-averaging}
\end{equation}
The convergence is uniform with respect to the choice of normalized
eigenfunction at frequency \(k_n\).
\end{lemma}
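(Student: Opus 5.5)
The plan is to expand \(|u_{n,e}|^2\) exactly as in the proof of \cref{lem:uniform-coefficient-bounds} and reduce the claim to the vanishing of two oscillatory integrals over \(W_e\) that do not depend on the eigenfunction. On edge \(e\), for frequency \(k=k_n\),
\[
|u_{n,e}(x)|^2
=
q_e(u_n)
+\frac{|A_e|^2-|B_e|^2}{2}\cos(2k_nx)
+\operatorname{Re}(A_e\overline{B_e})\sin(2k_nx).
\]
Integrating over \(W_e\) then gives
\[
\int_{W_e}|u_{n,e}|^2-\alpha_e q_e(u_n)
=
\frac{|A_e|^2-|B_e|^2}{2}\,c_n
+\operatorname{Re}(A_e\overline{B_e})\,s_n,
\]
where \(c_n:=\int_{W_e}\cos(2k_nx)\,dx\) and \(s_n:=\int_{W_e}\sin(2k_nx)\,dx\). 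The coefficients in front of \(c_n\) and \(s_n\) are bounded in absolute value by \(q_e(u_n)\). Once \(k_n\ge 4/\ell_{\min}\), \eqref{eq:qe-bound} gives \(q_e(u_n)\le 2/\ell_e\) for every normalized eigenfunction at that frequency. Hence the error is at most \((2/\ell_e)(|c_n|+|s_n|)\).

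The remaining step is to show \(c_n,s_n\to0\) as \(k_n\to\infty\). This is the Riemann--Lebesgue lemma applied to \(\mathbf 1_{W_e}\in L^1(\mathbb R)\), using the frequencies \(2k_n\to\infty\). The uniformity claim is then immediate: the bound \((2/\ell_e)(|c_n|+|s_n|)\) depends only on \(W_e\), \(\ell_e\) and \(k_n\), and not on the particular normalized eigenfunction at frequency \(k_n\). This uniformity is what later permits taking the infimum over eigenfunctions.

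The only point needing care is Riemann--Lebesgue for an arbitrary measurable set, as opposed to an interval. If a self-contained argument is wanted, approximate \(\mathbf 1_{W_e}\) in \(L^1\) by a finite union of intervals \(J\) with \(|W_e\triangle J|<\delta\). On each interval the oscillatory integral is \(O(1/k_n)\) by explicit integration. The approximation error contributes at most \(\delta\) to \(|c_n|+|s_n|\). Letting \(n\to\infty\) and then \(\delta\to0\) finishes the proof. No rate is claimed, which is consistent with the statement: for measurable sets the convergence can be arbitrarily slow.
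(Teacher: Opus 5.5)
Your proposal is correct and follows the paper's own proof. Like the paper, you integrate the oscillatory expansion of $|u_{n,e}|^2$ over $W_e$, bound both coefficients by $q_e(u_n)\le 2/\ell_e$ using \cref{lem:uniform-coefficient-bounds}, and send the two eigenfunction-independent integrals $\int_{W_e}\cos(2k_nx)\,dx$ and $\int_{W_e}\sin(2k_nx)\,dx$ to zero by Riemann--Lebesgue. Your explicit uniform bound $(2/\ell_e)(|c_n|+|s_n|)$ and the interval-approximation proof only make explicit what the paper leaves implicit; the approximation error is at most $\delta$ for each integral, so $2\delta$ for the sum, which is harmless.
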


\begin{proof}
Integrating the oscillatory expansion from the proof of
\cref{lem:uniform-coefficient-bounds} over \(W_e\) gives
\[
\int_{W_e}|u_{n,e}|^2
=
\alpha_eq_e(u_n)
+
\frac{|A_{n,e}|^2-|B_{n,e}|^2}{2}
\int_{W_e}\cos(2k_nx)\,dx
+
\operatorname{Re}(A_{n,e}\overline{B_{n,e}})
\int_{W_e}\sin(2k_nx)\,dx.
\]
Because \(\mathbf1_{W_e}\in L^1(0,\ell_e)\), the Riemann--Lebesgue lemma
implies that both Fourier integrals tend to zero. By
\cref{lem:uniform-coefficient-bounds}, for sufficiently large \(n\) the
coefficients in front of those integrals are bounded uniformly over the
normalized eigenspace. This proves \eqref{eq:measurable-averaging}.
\end{proof}
For an observation set \(\omega\subset G\), write
\begin{equation}
\alpha_e:=|\omega\cap e|,
\qquad
\alpha:=(\alpha_e)_{e\in E}.
\label{eq:alpha-vector}
\end{equation}
Summing \cref{lem:measurable-averaging} over the finite edge set yields
\begin{equation}
\int_\omega |u_n|^2
=
\alpha\cdot q(u_n)+o(1)
\qquad(k_n\to\infty).
\label{eq:global-averaging}
\end{equation}
Notice that no openness or regularity of \(\omega\) is used here;
measurability is enough.

\subsection{The attainable intensity set}

Define the weighted probability simplex
\begin{equation}
P_\ell
:=
\left\{
q\in[0,\infty)^E:
\sum_{e\in E}\ell_eq_e=1
\right\}.
\label{eq:weighted-simplex}
\end{equation}

\begin{definition}[Attainable high-frequency intensity]
A vector \(q\in[0,\infty)^E\) is attainable for the metric \(\ell\) if there
exist normalized exact eigenfunctions \(u_n\) and frequencies \(k_n\to\infty\)
such that
\[
-\Delta_{G,\ell}u_n=k_n^2u_n,
\qquad
\|u_n\|_{L^2(G)}=1,
\qquad
q(u_n)\longrightarrow q.
\]
The set of all attainable vectors is denoted by
$
I(\ell).
$
\end{definition}

\begin{lemma}[Compactness of the attainable set]
\label{lem:compact-attainable}
The set \(I(\ell)\) is a nonempty compact subset of \(P_\ell\).
\end{lemma}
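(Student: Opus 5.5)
We prove the three properties separately: containment in \(P_\ell\), nonemptiness, and closedness. Compactness then follows because \(P_\ell\) is compact: it is closed, and bounded by \(0\le q_e\le 1/\ell_e\).

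\emph{Containment in \(P_\ell\).} Let \(q\in I(\ell)\) with witnessing eigenfunctions \(u_n\) and frequencies \(k_n\to\infty\). Each \(q(u_n)\) lies in \([0,\infty)^E\), and this orthant is closed, so \(q\ge0\). For \(n\) large we have \(k_n\ge 4/\ell_{\min}\), and \cref{lem:uniform-coefficient-bounds} gives \(\sum_e\ell_e q_e(u_n)=1+O(k_n^{-1})\). Letting \(n\to\infty\) yields \(\ell\cdot q=1\), so \(q\in P_\ell\).

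\emph{Nonemptiness.} The operator \(-\Delta_{G,\ell}\) is self-adjoint on a compact graph and has compact resolvent. Hence its spectrum is an unbounded discrete sequence \(\lambda_n\to\infty\), and we may take positive frequencies \(k_n=\sqrt{\lambda_n}\to\infty\) with normalized eigenfunctions \(u_n\). By \eqref{eq:qe-bound}, the vectors \(q(u_n)\) lie in the bounded box \(\prod_e[0,2/\ell_e]\) once \(k_n\ge4/\ell_{\min}\). Bolzano--Weierstrass then gives a convergent subsequence, whose limit is attainable by definition. The one point to check is that the spectrum is indeed infinite. This is standard: the form domain is infinite-dimensional and the embedding \(H^1\hookrightarrow L^2\) on each edge is compact.

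\emph{Closedness.} This is a diagonal argument. Let \(q^{(j)}\in I(\ell)\) with \(q^{(j)}\to q\). For each \(j\), choose an index in the witnessing sequence for \(q^{(j)}\) giving an eigenfunction \(v_j\) with frequency \(\kappa_j\ge j\) and \(|q(v_j)-q^{(j)}|<1/j\). Then \(\kappa_j\to\infty\), and
\[
|q(v_j)-q|\le |q(v_j)-q^{(j)}|+|q^{(j)}-q|\to0,
\]
so \(q\in I(\ell)\).

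No step is a real obstacle. The only points needing care are the appeal to discreteness and unboundedness of the spectrum in the nonemptiness step, and ensuring \(k_n\ge4/\ell_{\min}\) before applying \cref{lem:uniform-coefficient-bounds}, which is automatic since \(k_n\to\infty\).
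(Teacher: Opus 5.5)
Your proof is correct and follows essentially the same route as the paper. Nonemptiness comes from the discrete unbounded spectrum together with the uniform bound \eqref{eq:qe-bound} and Bolzano--Weierstrass, containment in \(P_\ell\) from \eqref{eq:normalization-asymptotic}, closedness from the same diagonal choice of eigenfunctions with frequency at least \(j\) and intensity within \(1/j\), and compactness from closedness inside the compact simplex \(P_\ell\).
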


\begin{proof}
The spectrum of a compact metric graph is discrete and unbounded, so there
exists a sequence of normalized eigenfunctions with frequencies tending to
infinity. By \cref{lem:uniform-coefficient-bounds}, the corresponding
intensity vectors are eventually contained in a fixed compact rectangle in
\(\mathbb R^E\); hence at least one convergent subsequence exists and
\(I(\ell)\ne\varnothing\).
If \(q\in I(\ell)\) is realized by \(u_n\) with \(k_n\to\infty\), then
\eqref{eq:normalization-asymptotic} and \(q(u_n)\to q\) imply
$
\sum_e\ell_eq_e=1.
$
Thus, \(I(\ell)\subset P_\ell\).

For closedness, let \(q^{(m)}\in I(\ell)\) and suppose \(q^{(m)}\to q\).
For each \(m\), choose a normalized exact eigenfunction \(u_m\) with frequency
\(k_m\ge m\) such that
\[
\|q(u_m)-q^{(m)}\|\le \frac1m.
\]
Then, \(k_m\to\infty\) and \(q(u_m)\to q\), so \(q\in I(\ell)\).
Hence, \(I(\ell)\) is closed, and therefore, compact as a closed subset of the
compact simplex \(P_\ell\).
\end{proof}

\subsection{Exact variational principle}

For \(\Lambda>0\), set
\begin{equation}
m_\omega(\Lambda)
:=
\inf_{\substack{
-\Delta_{G,\ell}u=\lambda u\\
\lambda\ge\Lambda,\ \|u\|_{L^2(G)}=1}}
\int_\omega |u|^2.
\label{eq:momega}
\end{equation}
The function \(m_\omega\) is nondecreasing and bounded between \(0\) and \(1\);
therefore,
\[
C_\infty(\omega;\ell)
=
\lim_{\Lambda\to\infty}m_\omega(\Lambda)
\]
always exists.

\begin{theorem}[Exact high-frequency variational principle]
\label{thm:exact-variational-principle}
Let \(\omega\subset G\) be measurable and let \(\alpha\) be given by
\eqref{eq:alpha-vector}. Then,
\begin{equation}
C_\infty(\omega;\ell)
=
\min_{q\in I(\ell)}\alpha\cdot q.
\label{eq:exact-variational-principle}
\end{equation}
In particular, the high-frequency localization constant depends on
\(\omega\) only through the edgewise measures
\((|\omega\cap e|)_{e\in E}\).
\end{theorem}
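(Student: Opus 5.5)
The plan is to prove the two inequalities \(C_\infty(\omega;\ell)\ge\min_{I(\ell)}\alpha\cdot q\) and \(C_\infty(\omega;\ell)\le\min_{I(\ell)}\alpha\cdot q\) separately. The common tool is the uniform averaging identity \eqref{eq:global-averaging}. First observe that the minimum exists: \(I(\ell)\) is nonempty and compact by \cref{lem:compact-attainable}, and \(q\mapsto\alpha\cdot q\) is continuous. Call its value \(m^\ast\). The final clause of the theorem, that only \((|\omega\cap e|)_e\) matters, is then immediate, because the right-hand side of \eqref{eq:exact-variational-principle} involves \(\omega\) only through \(\alpha\).

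\emph{Upper bound.} Choose \(q^\ast\in I(\ell)\) with \(\alpha\cdot q^\ast=m^\ast\). By the definition of attainability there are normalized exact eigenfunctions \(u_n\) with frequencies \(k_n\to\infty\) and \(q(u_n)\to q^\ast\). Fix \(\Lambda\). For \(n\) large we have \(k_n^2\ge\Lambda\), so \(m_\omega(\Lambda)\le\int_\omega|u_n|^2=\alpha\cdot q(u_n)+o(1)\) by \eqref{eq:global-averaging}. Letting \(n\to\infty\) gives \(m_\omega(\Lambda)\le m^\ast\) for every \(\Lambda\), and hence \(C_\infty\le m^\ast\).

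\emph{Lower bound.} Argue by contradiction, and this is where uniformity is needed. Suppose \(C_\infty<m^\ast-\delta\) for some \(\delta>0\). Since \(m_\omega\) is nondecreasing, \(m_\omega(\Lambda)<m^\ast-\delta\) for all \(\Lambda\). So for each \(n\) there is a normalized eigenfunction \(u_n\) with eigenvalue \(\lambda_n\ge n\) and \(\int_\omega|u_n|^2<m^\ast-\delta\). Because the spectrum is discrete, \(\lambda_n\to\infty\), so \(k_n\to\infty\). By \eqref{eq:qe-bound} the vectors \(q(u_n)\) eventually lie in a fixed compact box, so a subsequence converges to some \(q\). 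By definition this limit \(q\) lies in \(I(\ell)\). Apply \cref{lem:measurable-averaging} and \eqref{eq:global-averaging} along the subsequence; the uniformity over the eigenspace at frequency \(k_n\) is exactly what lets us use an arbitrary chosen \(u_n\). This gives \(\alpha\cdot q=\lim\int_\omega|u_n|^2\le m^\ast-\delta\), which contradicts the definition of \(m^\ast\).

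The main delicate point is making sure the \(o(1)\) in \eqref{eq:global-averaging} holds for arbitrary sequences of eigenfunctions with \(k_n\to\infty\), not for a fixed one. This is stated in \cref{lem:measurable-averaging}, which combines uniform coefficient bounds with Riemann--Lebesgue applied only to the fixed function \(\mathbf 1_{W_e}\). One minor point to record: eigenvalues \(\lambda\le0\) could occur for some separated boundary conditions, but only finitely many eigenvalues lie below any fixed \(\Lambda\). They are therefore excluded once \(\Lambda\) is large, and throughout we may take \(\lambda=k^2\) with \(k>0\) large.
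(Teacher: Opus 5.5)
Your proof is correct and follows the paper's argument: the upper bound uses an exact eigensequence realizing a point of \(I(\ell)\) together with \eqref{eq:global-averaging}, and the lower bound extracts a convergent subsequence of intensities from low-observation eigenfunctions whose frequencies tend to infinity. The only difference is presentational: you phrase the lower bound by contradiction with a fixed gap \(\delta\), whereas the paper uses an almost-minimizing sequence with \(\int_\omega|u_n|^2\le m_\omega(\Lambda_n)+1/n\).
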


\begin{proof}
Because \(I(\ell)\) is compact by \cref{lem:compact-attainable}, the minimum
on the right-hand side is attained.
For the upper bound, fix \(q\in I(\ell)\) and let \(u_n\) be an exact
normalized eigensequence realizing it, with frequencies \(k_n\to\infty\).
By \eqref{eq:global-averaging},
\[
\int_\omega |u_n|^2\longrightarrow \alpha\cdot q.
\]
For every fixed \(\Lambda\), all sufficiently large \(n\) satisfy
\(k_n^2\ge\Lambda\), whence
\(m_\omega(\Lambda)\le\int_\omega|u_n|^2\).
Passing first to \(n\to\infty\) and then to \(\Lambda\to\infty\) gives
\[
C_\infty(\omega;\ell)\le\alpha\cdot q.
\]
Since \(q\) was arbitrary,
\[
C_\infty(\omega;\ell)
\le
\min_{q\in I(\ell)}\alpha\cdot q.
\]
For the reverse bound, choose \(\Lambda_n\to\infty\) and normalized exact
eigenfunctions \(u_n\), with eigenvalues \(k_n^2\ge\Lambda_n\), such that
\[
\int_\omega|u_n|^2
\le
m_\omega(\Lambda_n)+\frac1n.
\]
Then, \(k_n\to\infty\). By
\cref{lem:uniform-coefficient-bounds}, after extraction we may assume
\(q(u_n)\to q\) for some \(q\in I(\ell)\). Applying
\eqref{eq:global-averaging} along this subsequence gives
\[
\int_\omega|u_n|^2\longrightarrow\alpha\cdot q.
\]
On the other hand, \(m_\omega(\Lambda_n)\to C_\infty(\omega;\ell)\), and the
defining lower bound together with the almost-minimizing inequality implies
\[
\int_\omega|u_n|^2\longrightarrow C_\infty(\omega;\ell).
\]
Hence,
\[
C_\infty(\omega;\ell)
=
\alpha\cdot q
\ge
\min_{p\in I(\ell)}\alpha\cdot p.
\]
This proves \eqref{eq:exact-variational-principle}.
\end{proof}

\begin{corollary}[Structure of the observation functional]
\label{cor:observation-functional}
Define, for \(\alpha\in[0,\infty)^E\),
\[
F_\ell(\alpha):=
\min_{q\in I(\ell)}\alpha\cdot q.
\]
Then, \(F_\ell\) is positively homogeneous, concave, coordinatewise
nondecreasing, and globally Lipschitz on \([0,\infty)^E\). More precisely,
\begin{equation}
|F_\ell(\alpha)-F_\ell(\beta)|
\le
\frac1{\ell_{\min}}\|\alpha-\beta\|_\infty.
\label{eq:Fl-lipschitz}
\end{equation}
For every measurable observation set \(\omega\) with edgewise measure vector
\(\alpha\), \(F_\ell(\alpha)=C_\infty(\omega;\ell)\).
\end{corollary}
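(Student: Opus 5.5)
The plan is to prove each property of \(F_\ell\) straight from its definition as a pointwise minimum of linear functionals over the compact set \(I(\ell)\subset P_\ell\). By \cref{lem:compact-attainable}, \(I(\ell)\) is nonempty and compact, so \(F_\ell(\alpha)=\min_{q\in I(\ell)}\alpha\cdot q\) is a well-defined real number for every \(\alpha\in[0,\infty)^E\); indeed it is defined for all \(\alpha\in\mathbb R^E\). The final assertion is then exactly \cref{thm:exact-variational-principle}: for a measurable \(\omega\) with edgewise measure vector \(\alpha\), that theorem gives \(C_\infty(\omega;\ell)=\min_{q\in I(\ell)}\alpha\cdot q=F_\ell(\alpha)\).

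Next come the algebraic properties. \emph{Positive homogeneity:} for \(t\ge0\), \(\min_q (t\alpha)\cdot q=t\min_q\alpha\cdot q\). \emph{Concavity:} for \(\lambda\in[0,1]\) and each \(q\in I(\ell)\),
\[
(\lambda\alpha+(1-\lambda)\beta)\cdot q\ge\lambda F_\ell(\alpha)+(1-\lambda)F_\ell(\beta),
\]
and taking the minimum over \(q\) gives the claim. This is just the fact that a pointwise infimum of affine functions is concave. \emph{Monotonicity:} every \(q\in I(\ell)\) has nonnegative entries, so \(\alpha\le\beta\) coordinatewise gives \(\alpha\cdot q\le\beta\cdot q\) for each \(q\), and hence \(F_\ell(\alpha)\le F_\ell(\beta)\).

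The Lipschitz estimate is the only step that needs a quantitative input, namely a uniform \(\ell^1\) bound on \(I(\ell)\). Since \(q\in P_\ell\) has \(q_e\ge0\) and \(\sum_e\ell_eq_e=1\),
\[
\sum_e q_e\le\frac1{\ell_{\min}}\sum_e\ell_eq_e=\frac1{\ell_{\min}}.
\]
Hence for each \(q\in I(\ell)\),
\[
\alpha\cdot q\le\beta\cdot q+\|\alpha-\beta\|_\infty\|q\|_1\le\beta\cdot q+\frac{\|\alpha-\beta\|_\infty}{\ell_{\min}}.
\]
Minimizing over \(q\) gives \(F_\ell(\alpha)\le F_\ell(\beta)+\ell_{\min}^{-1}\|\alpha-\beta\|_\infty\), and exchanging \(\alpha\) and \(\beta\) yields \eqref{eq:Fl-lipschitz}.

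There is no real obstacle here. The one point to get right is that the \(\ell^1\) bound must come from the exact normalization \(I(\ell)\subset P_\ell\) proved in \cref{lem:compact-attainable}. Using only the approximate bound \eqref{eq:qe-bound} would give the weaker constant \(2|E|/\ell_{\min}\) rather than \(1/\ell_{\min}\).
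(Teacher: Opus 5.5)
Your proof is correct and follows essentially the same route as the paper. Concavity and monotonicity come from $F_\ell$ being a minimum of linear forms in $\alpha$ with nonnegative $q$, and the Lipschitz constant comes from the bound $\sum_e q_e\le 1/\ell_{\min}$ on $P_\ell$. Your explicit appeal to \cref{thm:exact-variational-principle} for the identity $F_\ell(\alpha)=C_\infty(\omega;\ell)$ supplies a step the paper leaves implicit.
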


\begin{proof}
Positive homogeneity is immediate. Since \(F_\ell\) is the pointwise minimum
of linear functions of \(\alpha\), it is concave. Since every
\(q\in I(\ell)\) has nonnegative coordinates, it is coordinatewise
nondecreasing. Finally, every \(q\in P_\ell\) satisfies
\[
\sum_eq_e
\le
\frac1{\ell_{\min}}
\sum_e\ell_eq_e
=
\frac1{\ell_{\min}}.
\]
Therefore,
\[
|(\alpha-\beta)\cdot q|
\le
\|\alpha-\beta\|_\infty\sum_eq_e
\le
\frac1{\ell_{\min}}\|\alpha-\beta\|_\infty.
\]
Taking minima gives \eqref{eq:Fl-lipschitz}.
\end{proof}

\begin{remark}[Convexification]
\label{rem:convexification}
Only linear functionals of \(I(\ell)\) occur in
\eqref{eq:exact-variational-principle}. Hence, if
$
K(\ell):=\operatorname{conv}I(\ell),
$
then
$
C_\infty(\omega;\ell)
=
\min_{q\in K(\ell)}\alpha\cdot q.
$
Thus, the exact attainable set is the spectral object, while its convex hull is
the natural optimization object. This distinction becomes essential in the
singular-completion theory.
\end{remark}

\subsection{Generic regular Gauss image in the standard Kirchhoff setting}

We next relate the attainable intensity set to the determinant-manifold
description of semiclassical measures. The statement below is a direct
consequence of the theorem of Colin de Verdi`ere \cite{CdV2015}, specialized
to the present normalization.

Throughout this subsection, assume that the standard Kirchhoff condition is
imposed at every vertex, including vertices of degree one. Let
\(Z_G\subset\mathbb T^E\) denote the determinant manifold and
\(Z_G^{\mathrm{reg}}\) its regular part. The Gauss map associates with each
\(z\in Z_G^{\mathrm{reg}}\) a positive ray,
$$
\Gamma:
Z_G^{\mathrm{reg}}
\longrightarrow
\bigl([0,\infty)^E\setminus\{0\}\bigr)/\mathbb R_+.
$$
For \(z\in Z_G^{\mathrm{reg}}\), choose any representative
\(g(z)\in[0,\infty)^E\setminus\{0\}\) of the ray \(\Gamma(z)\). For a fixed
metric \(\ell\), define its normalized Gauss image by
\begin{equation}
\widehat\Gamma_\ell(z)
:=
\frac{g(z)}{\ell\cdot g(z)}
\in P_\ell.
\label{eq:normalized-gauss}
\end{equation}
This definition is independent of the choice of positive representative
\(g(z)\), since the normalization is invariant under multiplication by a
positive scalar.

Following Colin de Verdi\`ere \cite{CdV2015}, let \(\mathcal G_G\) denote the generic class of
metrics whose components are rationally independent and whose phase line does
not meet the singular part of the determinant manifold; equivalently, in that
setting the Laplace spectrum is simple.

\begin{corollary}[Generic Gauss-map characterization]
\label{cor:generic-gauss-characterization}
Assume the standard Kirchhoff setting of the preceding paragraph and let
\(\ell\in\mathcal G_G\). Then,
\begin{equation}
I(\ell)
=
\overline{\widehat\Gamma_\ell(Z_G^{\mathrm{reg}})}.
\label{eq:generic-gauss-characterization}
\end{equation}
Consequently, for every measurable \(\omega\subset G\),
\begin{equation}
C_\infty(\omega;\ell)
=
\min_{q\in
\overline{\widehat\Gamma_\ell(Z_G^{\mathrm{reg}})}}
\alpha\cdot q.
\label{eq:gauss-variational}
\end{equation}
\end{corollary}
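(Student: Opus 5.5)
The argument reduces to matching two descriptions of the same limiting objects: the attainable set \(I(\ell)\) defined through exact eigensequences, and the set of semiclassical measures described by Colin de Verdi\`ere \cite{CdV2015}. Once \eqref{eq:generic-gauss-characterization} is established, \eqref{eq:gauss-variational} follows at once by substitution into \cref{thm:exact-variational-principle}. The minimum is attained because the closure is compact, being a closed subset of the compact simplex \(P_\ell\).

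\emph{First step: identify limits.} I will show that every \(q\in I(\ell)\) corresponds to a semiclassical measure. Fix an eigensequence \(u_n\) with \(q(u_n)\to q\). On each edge, \(|u_{n,e}|^2\) converges weakly-\(*\) to the constant density \(q_e\, dx\); this is exactly the content of \cref{lem:measurable-averaging}. The limit measure is therefore \(\sum_e q_e\,dx_e\), with total mass \(\ell\cdot q=1\).

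\emph{Second step: apply the theorem of \cite{CdV2015}.} For \(\ell\in\mathcal G_G\), the spectrum is simple and the phase line \(k\ell \bmod 2\pi\) is dense in \(\mathbb T^E\), by rational independence. The theorem states that the semiclassical measures of the fixed graph are exactly the limits of such edgewise-constant densities whose weight vectors lie on the rays \(\Gamma(z)\), \(z\in Z_G^{\mathrm{reg}}\), together with their limits. Each \(z\in Z_G^{\mathrm{reg}}\) is approached by phase points \(k_n\ell\) at which a simple eigenvalue occurs, and the normalized eigenfunction intensity there converges to \(\widehat\Gamma_\ell(z)\). This gives \(\widehat\Gamma_\ell(Z_G^{\mathrm{reg}})\subset I(\ell)\), and closedness (\cref{lem:compact-attainable}) yields \(\supseteq\) for the closure. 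Conversely, for an arbitrary eigensequence, the phases \(k_n\ell\bmod 2\pi\) lie on \(Z_G\) and, after extraction, converge to some \(z\in Z_G\). Since the phase line misses \(Z_G^{\mathrm{sing}}\), each \(k_n\ell\) is a regular point, and there the edge energies of the (one-dimensional) kernel vector are proportional to \(g(k_n\ell)\). This proportionality is the Gauss-map/gradient identity for \(\det\) at a simple zero. Hence \(q(u_n)=\widehat\Gamma_\ell(k_n\ell)+O(k_n^{-1})\), by the normalization estimate \eqref{eq:normalization-asymptotic}, and the limit lies in the closure.

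\emph{Main obstacle.} The delicate point is the conventions. I must check that Colin de Verdi\`ere's Gauss map, defined via the normal vector to \(Z_G^{\mathrm{reg}}\), has components equal, up to a common positive factor, to \(|A_e|^2+|B_e|^2\) in the present edge parametrization. I must also check that his normalization of measures agrees with \(\ell\cdot q=1\). I would verify this by differentiating the secular equation at a regular point. Derivative-of-determinant formulas for a simple kernel give \(\partial_{\theta_e}\det\propto\) the edge energy, and this exhibits positivity of the normal. Everything else is a direct citation combined with the compactness and averaging lemmas already proved.
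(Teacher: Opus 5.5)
Your proposal is correct, but after the first step it takes a more direct route than the paper. The paper's proof is a translation argument. It sets up a two-way correspondence between \(I(\ell)\) and probability semiclassical measures with constant edge densities: weak convergence from the oscillatory expansion in one direction, and recovery of the intensity limit by testing against continuous functions supported inside single edges in the other. It then quotes Colin de Verdi\`ere's generic theorem identifying the semiclassical measures with the closure of the regular Gauss image, and normalizes the rays at the end. You instead prove both inclusions at the level of intensity vectors. For \(I(\ell)\subseteq\overline{\widehat\Gamma_\ell(Z_G^{\mathrm{reg}})}\) you use that every eigenphase \(k_n\ell\) is a regular point, so \(q(u_n)=\widehat\Gamma_\ell(k_n\ell)+O(k_n^{-1})\) by the pointwise Gauss identity and \eqref{eq:normalization-asymptotic}; you rightly need no regularity of the limit point. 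For the reverse inclusion you use density of the phase line, continuity of the Gauss map on \(Z_G^{\mathrm{reg}}\), and closedness of \(I(\ell)\). This essentially reproves the needed part of Colin de Verdi\`ere's theorem from the single local input you correctly isolate: the Gauss normal is proportional to the edge energies at a simple zero. It also shows where each genericity hypothesis enters (avoidance of the singular part for one inclusion, rational independence for the other). It makes the converse semiclassical-measure correspondence unnecessary, and in fact your own first step as well. The paper's route is shorter but rests entirely on the citation. One step you should spell out: density only places \(k\ell\) near \(z\), not on \(Z_G\). Exact eigenphases near \(z\) come from a transverse crossing, which is automatic because \(\ell\cdot g(z)>0\) whenever \(g(z)\ge0\), \(g(z)\ne0\), \(\ell>0\). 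This is the same one-dimensional correction as in \cref{lem:return-to-zero-correction}. Using density of the forward orbit \(\{k\ell:k\ge K\}\) for every \(K\) then gives \(k_n\to\infty\).
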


\begin{proof}
For a normalized high-frequency eigensequence \(u_n\) with
\(q(u_n)\to q\), the same expansion used in
\cref{lem:measurable-averaging}, now tested against an arbitrary continuous
function on each edge, gives the weak convergence
\[
|u_n|^2\,dx
\rightharpoonup
\sum_{e\in E}q_e\,dx|_e.
\]
Conversely, if a normalized eigensequence has a semiclassical measure with
constant edge densities \(q_e\), then testing against continuous functions
supported in the interior of a single edge and using the oscillatory expansion
shows that, after extraction, its homogenized intensities converge to the same
vector \(q\). Thus, probability-valued semiclassical measures are in
one-to-one correspondence with \(I(\ell)\) through their constant edge-density
vectors.

For \(\ell\in\mathcal G_G\), Colin de Verdi\`ere's generic theorem \cite{CdV2015} identifies
the set of non-normalized semiclassical measures with the closure of the
regular Gauss image. Normalizing each Gauss ray to total mass one is exactly
the operation in \eqref{eq:normalized-gauss}, because a constant edge-density
vector \(m\) has total mass \(\ell\cdot m\). This yields
\eqref{eq:generic-gauss-characterization}, and
\eqref{eq:gauss-variational} follows from
\cref{thm:exact-variational-principle}.
\end{proof}

\begin{remark}[Scope of the generic characterization]
The exact variational principle of
\cref{thm:exact-variational-principle} does not require a genericity
assumption and applies to the fixed exterior boundary conditions considered
throughout the paper. By contrast,
\cref{cor:generic-gauss-characterization} is stated under the standard
Kirchhoff assumptions for which the determinant-manifold description of
Colin de Verdi\`ere \cite{CdV2015} applies. Accordingly, we use the generic
Gauss-map characterization only in this setting and make no corresponding
claim for mixed Dirichlet--Neumann exterior conditions.
\end{remark}

\begin{remark}[Role of the determinant-manifold description]
The determinant manifold, its Gauss map, and the generic characterization of
semiclassical measures are due to Colin de Verdi\`ere \cite{CdV2015}. The
contribution needed here is the exact measurable-observation variational
principle
\eqref{eq:exact-variational-principle}, which expresses the quantitative
localization constant as an optimization problem over the full attainable
intensity set. Consequently, primitive supports determine the positivity
threshold, whereas the positive numerical value generally depends on the
larger geometry of the attainable intensity set.
\end{remark}

\section{Primitive supports: positivity versus the value of the constant}
\label{sec:primitive-supports}

The variational principle of Section~\ref{sec:variational} separates two
questions that are naturally related but mathematically distinct. The first
is qualitative: can an attainable high-frequency intensity assign zero mass
to the observation set? The second is quantitative: what is the least
observed mass among all attainable high-frequency intensities? In the generic
standard-Kirchhoff setting, the first question is governed by the
minimal-support structure described by Colin de Verdi\`ere
\cite{CdV2015}, whereas the second depends on the geometry of the full
attainable intensity set. This section makes this distinction precise and
exhibits an explicit family for which the two levels of information are
strictly separated.

\subsection{Primitive supports characterize the positivity threshold}
Throughout this subsection, we work under the hypotheses of
\cref{cor:generic-gauss-characterization}: every vertex carries the standard
Kirchhoff condition, and \(\ell\in\mathcal G_G\) is generic in the sense of
Colin de Verdi\`ere \cite{CdV2015}. A \emph{primitive support} is the support
of a minimal semiclassical measure. Under these hypotheses, the primitive
supports of a finite metric graph are precisely the simple cycles and the
simple paths joining two exterior vertices.

For a primitive support \(P\subset G\), we write
\[
|\omega\cap P|
:=
\sum_{e\subset P}|\omega\cap e|
\]
for the total length of the portion of \(P\) contained in the observation
set \(\omega\).

\begin{corollary}[Primitive-support criterion]
\label{cor:primitive-positivity}
Assume that every vertex carries the standard Kirchhoff condition and that
\(\ell\in\mathcal G_G\) is generic in the sense of Colin de Verdi\`ere.
Then
\begin{equation}
C_\infty(\omega;\ell)>0
\quad\Longleftrightarrow\quad
|\omega\cap P|>0
\quad\text{for every primitive support }P.
\label{eq:primitive-positivity}
\end{equation}
Equivalently, the observation set has positive measure on every simple cycle
and every simple path joining two exterior vertices.
\end{corollary}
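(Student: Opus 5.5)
The plan is to combine the exact variational principle of \cref{thm:exact-variational-principle} with the generic Gauss-map characterization of \cref{cor:generic-gauss-characterization}. Together they write \(C_\infty(\omega;\ell)=\min_{q\in I(\ell)}\alpha\cdot q\), where \(I(\ell)\) is the closure of the normalized regular Gauss image. Since \(I(\ell)\) is compact and \(\alpha,q\ge0\), we have \(C_\infty=0\) if and only if some attainable \(q\in I(\ell)\) satisfies \(\alpha\cdot q=0\). This is the same as \(\operatorname{supp}q\subset\{e:\alpha_e=0\}\). So the whole statement reduces to a support question about semiclassical measures.

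For the direction (\(\Leftarrow\) fails \(\Rightarrow\) \(C_\infty=0\)), suppose some primitive support \(P\) has \(|\omega\cap P|=0\). Then \(\alpha_e=0\) for every edge \(e\subset P\). By the definition of primitive supports, \(P\) is the support of a minimal semiclassical measure. Normalizing that measure gives a vector \(q^P\in I(\ell)\) with support exactly the edges of \(P\), using the correspondence between semiclassical measures and \(I(\ell)\) established in the proof of \cref{cor:generic-gauss-characterization}. Hence \(\alpha\cdot q^P=0\), so \(C_\infty(\omega;\ell)=0\). Here I rely on the CdV fact, quoted before the statement, that the simple cycles and exterior-to-exterior paths are the supports of minimal semiclassical measures and that these lie in the closed attainable set.

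For the converse, suppose \(C_\infty=0\). Then there is a \(q\in I(\ell)\) with \(\alpha\cdot q=0\), that is, \(q\) vanishes wherever \(\alpha_e>0\). We need a primitive support contained in \(\operatorname{supp}q\).

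\textbf{Main step.} I expect this to be the main obstacle. It requires the structural input from \cite{CdV2015}: every semiclassical measure's support contains the support of a minimal one. In the generic setting, the set of semiclassical measures is a closed cone. Its extreme rays are the minimal measures, supported on simple cycles and exterior-to-exterior paths. I would argue by minimality: among nonzero attainable measures with support in \(\operatorname{supp}q\), pick one of minimal support, which exists since \(E\) is finite. By CdV's characterization of minimal supports, that support is a primitive support \(P\).

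If one prefers a self-contained route, note that the support of any Gauss vector carries a Kirchhoff-balanced flux, coming from current conservation at vertices. Any subgraph where every internal vertex has degree \(\ne1\) within it, or which ends at exterior vertices, contains a simple cycle or an exterior-to-exterior path. I would verify this combinatorially by following edges from a vertex until a repeat or an exterior vertex occurs; the leaf-exclusion property of supports is exactly what I expect to need from CdV. Then \(\alpha_e=0\) on \(P\), so \(|\omega\cap P|=0\), which contradicts the hypothesis. This completes the equivalence.
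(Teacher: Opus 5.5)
Your proposal is correct and follows essentially the same route as the paper. Both arguments use the exact variational principle and the generic Gauss-map characterization to reduce the claim to the existence of an attainable \(q\) with \(\alpha\cdot q=0\). Both then invoke Colin de Verdi\`ere's results in each direction: every primitive support carries an attainable measure, and every nonzero semiclassical support contains a primitive one. Your aside about extreme rays is not needed, since convexity of the attainable set is not assumed here; the finite-minimality argument combined with Colin de Verdi\`ere's identification of minimal supports is exactly the content of the paper's appeal to the minimal-support theorem.
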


\begin{proof}
By \cref{cor:generic-gauss-characterization},
\[
C_\infty(\omega;\ell)
=
\min_{q\in
\overline{\widehat\Gamma_\ell(Z_G^{\mathrm{reg}})}}
\alpha\cdot q.
\]
Suppose first that a primitive support \(P\) satisfies
\(|\omega\cap P|=0\). The corresponding minimal semiclassical measure belongs
to the closure of the regular Gauss image, and its edge-density vector
\(q^P\) is supported on \(P\). Since \(\alpha_e=0\) on every edge of \(P\) on
which \(q_e^P>0\), we have \(\alpha\cdot q^P=0\). Hence,
\(C_\infty(\omega;\ell)=0\).

Conversely, assume \(C_\infty(\omega;\ell)=0\). Compactness of the attainable
intensity set and \cref{thm:exact-variational-principle} give an attainable
limit \(q\) with \(\alpha\cdot q=0\). Since both vectors are nonnegative,
every edge on which \(q_e>0\) has \(\alpha_e=0\).
Colin de Verdi\`ere's minimal-support theorem \cite{CdV2015} implies that the support of a
nonzero semiclassical measure contains a minimal support \(P\), which is a
simple cycle or a simple exterior-to-exterior path. Thus, \(\alpha_e=0\) on
every edge of \(P\), and therefore, \(|\omega\cap P|=0\).
\end{proof}

\begin{remark}[Positivity versus quantitative minimization]
\label{rem:positivity-not-value}
The criterion \eqref{eq:primitive-positivity} characterizes whether the
minimum of the observation functional is strictly positive. It does not imply
that, once this minimum is positive, it is attained on a primitive support.
Primitive supports therefore determine the zero-versus-positive threshold,
whereas the positive numerical value of \(C_\infty(\omega;\ell)\) depends on
the optimization of the observation functional over the full attainable
intensity set \(I(\ell)\).
\end{remark}

\subsection{The parallel-edge graph and its regular intensity geometry}

Let \(\Theta_m\) be the graph consisting of two vertices \(v_-\) and \(v_+\)
joined by \(m\ge3\) parallel edges of lengths
\(\ell_1,\ldots,\ell_m>0\), with standard Kirchhoff conditions at both
vertices. We first describe the closure of its regular Gauss intensities.
Let \(u\) be a real regular secular state at wave number \(k>0\), and orient
every edge from \(v_-\) to \(v_+\). Put
\[
\phi:=u(v_-),
\qquad
p_j:=k^{-1}\partial_\nu u_j(v_-),
\qquad
j=1,\ldots,m,
\]
where \(\partial_\nu\) denotes the derivative in the chosen outgoing
direction. On edge \(e_j\), using the phase variable \(t=kx\),
\begin{equation}
u_j(t)=\phi\cos t+p_j\sin t.
\label{eq:theta-edge-state}
\end{equation}
The nonoscillatory edge intensity is therefore
\begin{equation}
q_j=\frac{\phi^2+p_j^2}{2}.
\label{eq:theta-edge-intensity}
\end{equation}
The Kirchhoff condition at \(v_-\) gives
\[
\sum_{j=1}^m p_j=0.
\]
With
\[
s:=\frac{\phi}{\sqrt2},
\qquad
x_j:=\frac{p_j}{\sqrt2},
\]
we obtain
\begin{equation}
q_j=s^2+x_j^2,
\qquad
\sum_{j=1}^m x_j=0.
\label{eq:theta-intensity-param}
\end{equation}
The converse is true after taking the closure of the regular secular image.
Indeed, fix \(s\ne0\) and \(x\in\mathbb R^m\) with
\(\sum_jx_j=0\), initially assuming \(x_j\ne0\) for every \(j\). Set
\(\phi=\sqrt2\,s\), \(p_j=\sqrt2\,x_j\), prescribe the same vertex value
\(u(v_+)=\phi\), and choose the edge phase
\begin{equation}
\theta_j
=
2\arctan\!\left(\frac{p_j}{\phi}\right)
\pmod{2\pi}.
\label{eq:theta-phase-choice}
\end{equation}
Then, \eqref{eq:theta-edge-state} takes the value \(\phi\) at
\(t=\theta_j\). Its derivative in the chosen orientation equals \(-p_j\)
there, so the outward derivative at \(v_+\) equals \(p_j\). Hence, the
Kirchhoff condition at \(v_+\) is again \(\sum_jp_j=0\). The state is regular
because \(\sin\theta_j\ne0\). Vectors having zero coordinates are obtained by
approximation inside the hyperplane \(\sum_jx_j=0\); the regimes \(s=0\) and
\(x=0\) are obtained as limits.

Consequently, the closure of the normalized regular intensity image is
\begin{equation}
\mathcal Q_m(\ell)
=
\left\{
\frac{(s^2+x_1^2,\ldots,s^2+x_m^2)}
{\sum_{j=1}^m\ell_j(s^2+x_j^2)}
:
(s,x)\ne0,\ 
\sum_{j=1}^m x_j=0
\right\}.
\label{eq:theta-regular-intensities}
\end{equation}

\begin{proposition}[Exact localization formula on \(\Theta_m\)]
\label{prop:theta-m-formula}
Assume that the metric \(\ell\) satisfies the generic hypotheses of
\cref{cor:generic-gauss-characterization}. For an arbitrary measurable
observation set with edgewise measures
\(\alpha=(\alpha_1,\ldots,\alpha_m)\),
\begin{equation}
C_\infty(\alpha;\ell)
=
\min\left\{
\frac{\sum_{j=1}^m\alpha_j}{\sum_{j=1}^m\ell_j},
\ 
\min_{\substack{x\ne0\\ \sum_jx_j=0}}
\frac{\sum_{j=1}^m\alpha_jx_j^2}
{\sum_{j=1}^m\ell_jx_j^2}
\right\}.
\label{eq:theta-m-localization}
\end{equation}
Here, \(C_\infty(\alpha;\ell)\) denotes the common value of
\(C_\infty(\omega;\ell)\) for sets \(\omega\) having the prescribed edgewise
measures \(\alpha\).
\end{proposition}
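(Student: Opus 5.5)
By \cref{cor:generic-gauss-characterization} and the identification \eqref{eq:theta-regular-intensities}, we have
\[
C_\infty(\alpha;\ell)=\min_{q\in\mathcal Q_m(\ell)}\alpha\cdot q .
\]
So the proof reduces to minimizing a ratio of two quadratic forms over \((s,x)\ne0\) in the subspace \(\sum_jx_j=0\):
\[
R(s,x)=\frac{\sum_j\alpha_j(s^2+x_j^2)}{\sum_j\ell_j(s^2+x_j^2)}
=\frac{s^2 a+\langle x,W_\alpha x\rangle}{s^2 L+\langle x,D_\ell x\rangle},
\qquad a:=\sum_j\alpha_j,\ L:=\sum_j\ell_j .
\]
The key structural point is that there are no cross terms between \(s\) and \(x\): \(s^2\) enters both numerator and denominator linearly, with coefficients \(a\) and \(L\). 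The first step is to record this decoupling. Before using it, I would check that \eqref{eq:theta-regular-intensities} really parametrizes the closure, so that the minimum over the closure equals the infimum of \(R\) over \((s,x)\ne0\), \(\sum_jx_j=0\). The parametrization includes the boundary cases \(s=0\) and \(x=0\), and \(R\) is homogeneous of degree zero and continuous on the unit sphere of that subspace, with positive denominator. So the infimum is attained and equals the minimum over the compact closure.

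Next I apply the mediant inequality. For \(N_1,N_2\ge0\) and \(D_1,D_2\ge0\) with \(D_1+D_2>0\), the ratio \((N_1+N_2)/(D_1+D_2)\) is at least the minimum of \(N_i/D_i\) over the indices with \(D_i>0\). Take \(N_1=s^2a\), \(D_1=s^2L\), \(N_2=\langle x,W_\alpha x\rangle\), \(D_2=\langle x,D_\ell x\rangle\). When \(s\ne0\), \(N_1/D_1=a/L\). When \(x\ne0\), \(D_2>0\) because \(\ell_j>0\), so \(N_2/D_2\) is at least
\[
\mu:=\min\Bigl\{\tfrac{\langle x,W_\alpha x\rangle}{\langle x,D_\ell x\rangle}: x\ne0,\ \textstyle\sum_jx_j=0\Bigr\}.
\]
This \(\mu\) is the smallest generalized eigenvalue of the pencil \((W_\alpha,D_\ell)\) restricted to the hyperplane. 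It is attained by compactness, and the hyperplane is nonzero because \(m\ge3\). Together these give \(R\ge\min\{a/L,\mu\}\).

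Both values are attained. Taking \(x=0\), \(s=1\) gives \(a/L\). Taking \(s=0\) and \(x\) a minimizing generalized eigenvector gives \(\mu\). This yields \eqref{eq:theta-m-localization}.

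The only delicate point, and the step I expect to be the main obstacle, is justifying that the degenerate parameters \(s=0\) and \(x=0\) (and vectors \(x\) with zero coordinates) really lie in the attainable set \(I(\ell)\), not merely in some formal closure. The paper's construction covers this: those vectors are limits of regular secular intensities, and \cref{cor:generic-gauss-characterization} states \(I(\ell)\) as the closure of the normalized regular Gauss image. I would make this explicit by citing that corollary together with the closedness from \cref{lem:compact-attainable}. I would also note that \(\widehat\Gamma_\ell\) agrees with the normalization \(q/(\ell\cdot q)\) used in \eqref{eq:theta-regular-intensities}. This holds because the Gauss ray of a regular point is the edge-energy ray of its secular state, and that is exactly what \eqref{eq:theta-edge-intensity} computes.
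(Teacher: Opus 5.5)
Your proposal is correct and follows the same route as the paper. You reduce via \cref{cor:generic-gauss-characterization} and \eqref{eq:theta-regular-intensities} to the quotient $R(s,x)$. You then observe that a mixed state's ratio is a weighted average (your mediant inequality) of the uniform quotient $a/L$ and the constrained quotient, so it cannot fall below their minimum, and you realize both branches at $x=0$ and $s=0$.
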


\begin{proof}
By \cref{cor:generic-gauss-characterization} and
\eqref{eq:theta-regular-intensities}, the high-frequency constant is the
minimum, over \((s,x)\ne0\) with \(\sum_jx_j=0\), of
\[
R(s,x)
=
\frac{s^2\sum_j\alpha_j+\sum_j\alpha_jx_j^2}
{s^2\sum_j\ell_j+\sum_j\ell_jx_j^2}.
\]
Write
\[
A:=\sum_j\alpha_j,
\qquad
L:=\sum_j\ell_j,
\qquad
A_x:=\sum_j\alpha_jx_j^2,
\qquad
L_x:=\sum_j\ell_jx_j^2.
\]
If \(s\ne0\) and \(x\ne0\), then
\[
R(s,x)
=
\frac{s^2L}{s^2L+L_x}\frac{A}{L}
+
\frac{L_x}{s^2L+L_x}\frac{A_x}{L_x}.
\]
Thus, every mixed state gives a weighted average of the uniform quotient
\(A/L\) and a constrained quotient \(A_x/L_x\), so it cannot lie below their
minimum. Conversely, the limits \(x\to0\) and \(s\to0\) in
\eqref{eq:theta-regular-intensities} realize the two branches in the closure.
Minimizing over \(x\in\mathbf1^\perp\setminus\{0\}\) yields
\eqref{eq:theta-m-localization}.
\end{proof}

\begin{remark}[Constrained generalized eigenvalue]
Set
\[
D_\alpha=\operatorname{diag}(\alpha_1,\ldots,\alpha_m),
\qquad
D_\ell=\operatorname{diag}(\ell_1,\ldots,\ell_m).
\]
The second branch of \eqref{eq:theta-m-localization} is the smallest generalized
eigenvalue of \((D_\alpha,D_\ell)\) restricted to
\(\mathbf1^\perp\). Thus, even this elementary graph already replaces a finite
comparison of primitive supports by a genuine constrained spectral
optimization problem.
\end{remark}

\subsection{The three-edge graph: primitive scars do not determine the value}

We now specialize to \(\Theta_3\), write
$
(\ell_1,\ell_2,\ell_3)=(a,b,c),
$
and observe the second and third edges completely. Thus,
$
\alpha=(0,b,c).
$
The primitive supports are the three two-edge cycles. Their observation ratios
are
\[
\frac{b}{a+b},
\qquad
\frac{c}{a+c},
\qquad
1,
\]
so the best primitive-scar benchmark is
\begin{equation}
C_{\mathrm{prim}}
=
\min\left\{
\frac{b}{a+b},
\frac{c}{a+c}
\right\}
>0.
\label{eq:Cprim-theta3}
\end{equation}

\begin{proposition}[Exact \(\Theta_3\) constant and strict primitive-scar gap]
\label{prop:theta3-gap}
Under the generic metric hypothesis,
\begin{equation}
C_\infty
=
\frac{bc}{ab+ac+bc},
\label{eq:theta3-Cinf}
\end{equation}
and
\begin{equation}
C_\infty<C_{\mathrm{prim}}.
\label{eq:theta3-strict-gap}
\end{equation}
A minimizing intensity is proportional to
\begin{equation}
\bigl((b+c)^2,c^2,b^2\bigr),
\label{eq:theta3-min-intensity}
\end{equation}
which has support on all three edges and is therefore not a primitive scar.
\end{proposition}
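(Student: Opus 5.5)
We apply \cref{prop:theta-m-formula} with \(m=3\), \(\ell=(a,b,c)\) and \(\alpha=(0,b,c)\). The uniform branch is
\[
\frac{b+c}{a+b+c}.
\]
The constrained branch minimizes
\[
\frac{b x_2^2+c x_3^2}{a x_1^2+b x_2^2+c x_3^2}
\qquad\text{over }x_1+x_2+x_3=0,\ x\ne0.
\]
Substitute \(x_1=-(x_2+x_3)\). The quotient becomes \(N/(N+aM)\), where
\[
N=bx_2^2+cx_3^2,\qquad M=(x_2+x_3)^2.
\]
Since \(t\mapsto t/(t+a)\) is increasing, minimizing the quotient amounts to minimizing \(N/M\) over \((x_2,x_3)\) with \(x_2+x_3\ne0\). (If \(x_2+x_3=0\), the quotient equals \(1\), which is not a candidate.) By Cauchy--Schwarz,
\[
(x_2+x_3)^2\le\Bigl(\tfrac1b+\tfrac1c\Bigr)(bx_2^2+cx_3^2),
\]
with equality when \(x_2:x_3=c:b\). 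Hence
\[
\min\frac{N}{M}=\frac{bc}{b+c},
\]
and the constrained branch equals
\[
\frac{bc/(b+c)}{bc/(b+c)+a}=\frac{bc}{ab+ac+bc}.
\]
The minimizer is \(x=(-(b+c),c,b)\), which gives the intensity \((x_j^2)\propto((b+c)^2,c^2,b^2)\). This is exactly \eqref{eq:theta3-min-intensity}, and it has full support because \(b,c>0\).

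Next we compare this with the uniform branch. Clearing denominators,
\[
bc(a+b+c)<(b+c)(ab+ac+bc)
\]
reduces to
\[
0<a(b+c)^2+bc(b+c)-bc(b+c)=a(b+c)^2,
\]
which holds. So the constrained branch is the minimum, and \eqref{eq:theta3-Cinf} follows.

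For the strict gap \eqref{eq:theta3-strict-gap}, compare \(bc/(ab+ac+bc)\) with \(b/(a+b)\). The inequality is equivalent to
\[
c(a+b)<ab+ac+bc,
\]
that is, \(0<ab\), which is true. Symmetrically, comparison with \(c/(a+c)\) reduces to \(0<ac\). Thus \(C_\infty<C_{\mathrm{prim}}\).

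The only delicate point is justifying the use of the closure description \eqref{eq:theta-regular-intensities}, together with the generic hypothesis. This is already packaged in \cref{prop:theta-m-formula}, so no new obstacle arises. The rest is the elementary two-variable minimization above, where the main step is recognizing the Cauchy--Schwarz equality case.
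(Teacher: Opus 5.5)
Your proof is correct and follows the paper's argument essentially step for step: the reduction via \cref{prop:theta-m-formula}, the substitution \(x_1=-(x_2+x_3)\), weighted Cauchy--Schwarz with equality at \((x_2,x_3)=(c,b)\), and elementary cross-multiplication for both comparisons. There is one small arithmetic slip in the comparison with the uniform branch: you dropped the \(abc\) term of \(bc(a+b+c)\), so the correct difference is \((b+c)(ab+ac+bc)-bc(a+b+c)=a(b+c)^2-abc=a(b^2+bc+c^2)\), not \(a(b+c)^2\); this is still positive, so the conclusion is unaffected.
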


\begin{proof}
By \cref{prop:theta-m-formula}, the constrained branch is
\begin{equation}
\inf_{x_1+x_2+x_3=0}
\frac{b x_2^2+c x_3^2}
{a x_1^2+b x_2^2+c x_3^2}.
\label{eq:theta3-constrained}
\end{equation}
Set \(x_1=-(x_2+x_3)\) and
\[
N:=b x_2^2+c x_3^2.
\]
Weighted Cauchy--Schwarz gives
\[
(x_2+x_3)^2
\le
\left(\frac1b+\frac1c\right)
(bx_2^2+cx_3^2)
=
\frac{b+c}{bc}N.
\]
Therefore,
\[
\frac{N}{a(x_2+x_3)^2+N}
\ge
\frac{1}{1+a(b+c)/(bc)}
=
\frac{bc}{ab+ac+bc}.
\]
Equality is attained, for example, by
\[
x_2=c,\qquad x_3=b,\qquad x_1=-(b+c),
\]
which yields \eqref{eq:theta3-min-intensity}.
The uniform branch equals
\[
\frac{b+c}{a+b+c},
\]
and
\[
\frac{bc}{ab+ac+bc}
<
\frac{b+c}{a+b+c}
\]
because the difference after clearing positive denominators is
\(a(b^2+bc+c^2)>0\). This proves \eqref{eq:theta3-Cinf}.
Finally,
\[
\frac{bc}{ab+ac+bc}<\frac{b}{a+b},
\qquad
\frac{bc}{ab+ac+bc}<\frac{c}{a+c},
\]
since the two cross-multiplied positive differences are respectively
\(ab\) and \(ac\). Hence, \eqref{eq:theta3-strict-gap} holds.
\end{proof}

\begin{remark}[Regular approximation versus primitive support]
\label{rem:theta3-regular-approx}
The vector in \eqref{eq:theta3-min-intensity} lies on the \(s=0\) boundary
of the regular-intensity family \eqref{eq:theta-regular-intensities}. It is
therefore a nonprimitive singular limit of regular secular intensities, and
its observation ratio can be approached arbitrarily closely by regular
secular states. This is precisely the phenomenon relevant at this stage:
primitive supports still characterize the zero-versus-positive threshold,
but they need not determine the positive value of
\(C_\infty(\omega;\ell)\).

This example should be distinguished from the stronger obstruction developed
later in the paper. Here, the improved observation ratio is obtained only in a
singular limit of regular intensities. In the later construction, by contrast,
a nonsingular regular secular state lies outside the entire boundary-aware
singular-completion cone and yields a strictly smaller observation value than
every boundary-compatible singular completion.
\end{remark}

\subsection{All-but-one observation on \(\Theta_m\)}

The same mechanism persists for every \(m\ge3\). Observe all edges except
\(e_1\), so that
\[
\alpha_1=0,
\qquad
\alpha_j=\ell_j
\quad(j\ge2),
\]
and set
\begin{equation}
H_1:=\sum_{j=2}^m\ell_j^{-1}.
\label{eq:H1}
\end{equation}

\begin{corollary}[All-but-one formula]
\label{cor:theta-m-all-but-one}
Under the generic metric hypothesis,
\begin{equation}
C_\infty
=
\frac{1}{1+\ell_1H_1}.
\label{eq:theta-m-all-but-one}
\end{equation}
Moreover, for \(m\ge3\),
\begin{equation}
C_\infty
<
\min_{2\le j\le m}
\frac{\ell_j}{\ell_1+\ell_j},
\label{eq:theta-m-primitive-gap}
\end{equation}
so the exact constant is strictly smaller than every primitive cycle-scar
benchmark involving the unobserved edge.
\end{corollary}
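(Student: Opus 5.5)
The plan is to apply \cref{prop:theta-m-formula} with $\alpha=(0,\ell_2,\ldots,\ell_m)$ and evaluate both branches explicitly. The uniform branch is $\sum_{j\ge2}\ell_j/\sum_j\ell_j$. For the constrained branch, write $N:=\sum_{j\ge2}\ell_jx_j^2$ and eliminate $x_1=-\sum_{j\ge2}x_j$ using $\sum_j x_j=0$. The quotient then becomes
\[
\frac{N}{\ell_1\bigl(\sum_{j\ge2}x_j\bigr)^2+N}.
\]

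Next, weighted Cauchy--Schwarz gives $\bigl(\sum_{j\ge2}x_j\bigr)^2\le H_1N$, exactly as in the proof of \cref{prop:theta3-gap}. Hence the quotient is at least $1/(1+\ell_1H_1)$. Equality holds when $x_j=\ell_j^{-1}$ for $j\ge2$ and $x_1=-H_1$; this is an admissible nonzero vector in $\mathbf1^\perp$.

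It then remains to compare the two branches. Set $L':=\sum_{j\ge2}\ell_j$. The claim $1/(1+\ell_1H_1)<L'/(\ell_1+L')$ is equivalent to $\ell_1<\ell_1H_1L'$, i.e.\ to $H_1L'>1$. This holds because $H_1L'\ge(m-1)^2\ge4$ by Cauchy--Schwarz (AM--HM). So the constrained branch is the minimum, which gives \eqref{eq:theta-m-all-but-one}.

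For \eqref{eq:theta-m-primitive-gap}, fix $j\ge2$. The inequality $1/(1+\ell_1H_1)<\ell_j/(\ell_1+\ell_j)$ reduces after cross-multiplying to $\ell_1<\ell_1\ell_jH_1$, i.e.\ $\ell_jH_1>1$. This is true since $\ell_jH_1=1+\sum_{i\ge2,\,i\ne j}\ell_j/\ell_i>1$ whenever $m\ge3$. The only point needing care is this strictness, which is exactly where $m\ge3$ is used. The rest is routine.
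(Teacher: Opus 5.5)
Your proposal is correct and follows essentially the same route as the paper. You eliminate $x_1$, apply weighted Cauchy--Schwarz to get $x_1^2\le H_1N$ with equality at $x_j\propto\ell_j^{-1}$, and then compare with the uniform branch via $H_1\sum_{j\ge2}\ell_j>1$ and with each primitive ratio via $\ell_jH_1>1$. Your AM--HM bound $(m-1)^2$ and the expansion $\ell_jH_1=1+\sum_{i\ne j}\ell_j/\ell_i$ simply make explicit the strict inequalities that the paper states directly.
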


\begin{proof}
For \(x\in\mathbf1^\perp\), write
\[
N:=\sum_{j=2}^m\ell_jx_j^2,
\qquad
x_1=-\sum_{j=2}^m x_j.
\]
Weighted Cauchy--Schwarz gives
\[
x_1^2
\le
\left(\sum_{j=2}^m\ell_j^{-1}\right)
\left(\sum_{j=2}^m\ell_jx_j^2\right)
=
H_1N,
\]
with equality when \(x_j\propto\ell_j^{-1}\) for \(j\ge2\). Hence, the
constrained branch of \cref{prop:theta-m-formula} equals
\[
\inf
\frac{N}{\ell_1x_1^2+N}
=
\frac{1}{1+\ell_1H_1}.
\]
It is no larger than the uniform branch, and for \(m\ge3\) it is strictly
smaller because
\[
H_1>
\frac{1}{\sum_{j=2}^m\ell_j}.
\]
This proves \eqref{eq:theta-m-all-but-one}. Finally, for each \(j\ge2\),
\[
H_1>\ell_j^{-1},
\]
so
\[
\frac{1}{1+\ell_1H_1}
<
\frac{1}{1+\ell_1/\ell_j}
=
\frac{\ell_j}{\ell_1+\ell_j},
\]
which is \eqref{eq:theta-m-primitive-gap}.
\end{proof}

\begin{remark}[Section-level conclusion]
\label{rem:section3-conclusion}
For generic standard-Kirchhoff graphs, primitive cycles and
exterior-to-exterior paths give an exact zero-versus-positive criterion
through \cref{cor:primitive-positivity}. The \(\Theta_3\) example shows,
however, that the positive value of \(C_\infty(\omega;\ell)\) can lie
strictly below every primitive-support value, and
\cref{cor:theta-m-all-but-one} shows that this discrepancy persists
throughout the corresponding parallel-edge family. Thus, the qualitative
support criterion and the quantitative localization problem encode genuinely
different information.

This motivates enlarging the comparison class from primitive supports to the
larger family generated by boundary-compatible singular Kirchhoff
cancellations. The next section makes this singular-completion construction
algebraically precise. It also separates two logically distinct questions:
whether regular secular edge-energy vectors lie in the resulting singular
cone, and whether the relevant regular secular states are accessible along
the spectral orbit of a single fixed metric.
\end{remark}


\section{Boundary-aware singular completion}
\label{sec:singular-completion}

Section~\ref{sec:primitive-supports} shows that primitive supports characterize
the qualitative zero-versus-positive threshold but need not determine the
positive numerical value of the high-frequency localization constant. This
motivates enlarging the comparison class. The enlargement is not obtained by
simply listing more complicated supports. At a singular secular
configuration, several nodal edge modes may coexist and cancel through the
Kirchhoff conditions, and their squared amplitudes generate edge-energy
vectors that need not arise from primitive scars.

The exterior boundary conditions enter this construction through the
\emph{admissible singular phase classes}. Orient each terminal edge from its
internal endpoint toward the exterior vertex. Its nodal mode contributes a
single signed amplitude to the Kirchhoff equation at the internal endpoint;
no additional Kirchhoff equation is imposed at the exterior endpoint. The
distinction between Dirichlet and Neumann exterior conditions is instead
encoded in the phase class for which the nodal edge mode satisfies the
prescribed boundary condition. We now make this boundary-aware singular
construction precise.

\subsection{Boundary-compatible nodal singular modes}
\label{subsec:boundary-singular-modes}

Fix a frequency \(k>0\).  We describe the singular edge modes after choosing an
orientation on every edge.  Amplitude signs are auxiliary and will later be
shown to have no effect on the resulting intensity cone.

\paragraph{Internal--internal edges.}
Let \(e\) join internal vertices \(v_-\) and \(v_+\), and identify
\(e\) with \([0,\ell_e]\), oriented from \(v_-\) to \(v_+\).  A nodal singular
mode has the form
\begin{equation}
    u_e(x)=A_e\sin(kx),
    \qquad A_e\in\mathbb R.
    \label{eq:internal-internal-singular-mode}
\end{equation}
The condition \(u_e(v_+)=0\) is equivalent to
\begin{equation}
    \theta_e:=k\ell_e\in\pi\mathbb Z.
    \label{eq:internal-internal-singular-phase}
\end{equation}
Write
\[
    \sigma_e:=\cos\theta_e\in\{+1,-1\}.
\]
The normalized outward derivatives are then
\begin{equation}
   k^{-1}\partial_\nu u_e(v_-)=A_e,
   \qquad
   k^{-1}\partial_\nu u_e(v_+)=-\sigma_e A_e.
   \label{eq:internal-internal-fluxes}
\end{equation}
Thus, the corresponding column of the Kirchhoff matrix has the two signed
incidences \(+1\) and \(-\sigma_e\).  If \(e\) is a loop, both incidences occur
in the same row, and the total coefficient is \(1-\sigma_e\).

\paragraph{Internal--Dirichlet edges.}
Let \(e\) join an internal vertex \(v\) to an exterior Dirichlet vertex \(w\),
oriented from \(v\) to \(w\).  Again, take
$
    u_e(x)=A_e\sin(kx).
$
The Dirichlet condition at \(w\) gives
\begin{equation}
    \theta_e=k\ell_e\in\pi\mathbb Z.
    \label{eq:dirichlet-terminal-phase}
\end{equation}
At the only internal endpoint,
\begin{equation}
    k^{-1}\partial_\nu u_e(v)=A_e.
    \label{eq:dirichlet-terminal-flux}
\end{equation}
Hence, the Kirchhoff column contains a single nonzero incidence, which may be
taken to be \(+1\).  The parity of \(\theta_e/\pi\) changes the value of the
mode at the exterior side only by the already imposed zero and therefore does
not create a second Kirchhoff incidence.

\paragraph{Internal--Neumann edges.}
Let \(e\) join an internal vertex \(v\) to an exterior Neumann vertex \(w\),
again oriented from \(v\) to \(w\).  With
$
    u_e(x)=A_e\sin(kx),
$
the Neumann condition at \(w\) is
$
    u'_e(\ell_e)=kA_e\cos(k\ell_e)=0,
$
and hence,
\begin{equation}
    \theta_e=k\ell_e\in \frac{\pi}{2}+\pi\mathbb Z.
    \label{eq:neumann-terminal-phase}
\end{equation}
At the internal endpoint,
\begin{equation}
    k^{-1}\partial_\nu u_e(v)=A_e.
    \label{eq:neumann-terminal-flux}
\end{equation}
Thus, the Kirchhoff column again contains one nonzero incidence, taken to be
\(+1\).  The two phase classes
\(\theta_e\equiv\pi/2,3\pi/2\pmod{2\pi}\) differ only by the sign of the
exterior value and generate the same squared-amplitude geometry.

\begin{remark}[Where the boundary condition enters]
\label{rem:where-bc-enters}
The term \emph{boundary-aware} refers to the admissible singular phase
classes, not to the introduction of an additional Kirchhoff row at an
exterior vertex. For a terminal edge oriented from its internal endpoint
toward the exterior vertex, compatibility with a Dirichlet condition occurs
at phases \(0\) or \(\pi\) modulo \(2\pi\), whereas compatibility with a
Neumann condition occurs at phases \(\pi/2\) or \(3\pi/2\) modulo \(2\pi\).
In either case, the terminal edge contributes only its signed amplitude to
the Kirchhoff equation at the internal endpoint. No additional incidence
constraint is imposed at the exterior endpoint. Thus the exterior boundary
condition determines the admissible singular phase sector rather than adding
a row to the signed Kirchhoff incidence matrix.
\end{remark}

\subsection{Boundary-aware signed incidence sectors}
\label{subsec:signed-incidence-sectors}

Let \(V_{\rm int}\) denote the internal vertices.  A
\emph{boundary-compatible singular sector} \(\sigma\) consists of

\begin{enumerate}[label=(\roman*)]
\item for every internal--internal edge \(e\), a parity
      \(\sigma_e=\cos\theta_e\in\{\pm1\}\) with
      \(\theta_e\in\pi\mathbb Z\);
\item for every internal--Dirichlet edge, a phase class
      \(\theta_e\in\pi\mathbb Z\);
\item for every internal--Neumann edge, a phase class
      \(\theta_e\in\pi/2+\pi\mathbb Z\).
\end{enumerate}
For a fixed representative of the edge orientations, define
\(B_{\sigma,\mathrm{bc}}\in\mathbb R^{V_{\rm int}\times E}\) by summing all
internal endpoint flux coefficients from
\eqref{eq:internal-internal-fluxes},
\eqref{eq:dirichlet-terminal-flux}, and
\eqref{eq:neumann-terminal-flux}.  Explicitly, for a non-loop
internal--internal edge \(e=(v_-,v_+)\),
\begin{equation}
  (B_{\sigma,\mathrm{bc}})_{v_-,e}=1,
  \qquad
  (B_{\sigma,\mathrm{bc}})_{v_+,e}=-\sigma_e,
  \label{eq:B-internal-column}
\end{equation}
and all other entries in that column vanish.  For a loop based at \(v\),
\begin{equation}
  (B_{\sigma,\mathrm{bc}})_{v,e}=1-\sigma_e.
  \label{eq:B-loop-column}
\end{equation}
For either a Dirichlet or Neumann terminal edge incident to an internal
vertex \(v\),
\begin{equation}
  (B_{\sigma,\mathrm{bc}})_{v,e}=1,
  \label{eq:B-terminal-column}
\end{equation}
with all other entries zero.

\begin{proposition}[Kirchhoff kernel characterization]
\label{prop:kirchhoff-kernel-characterization}
Let \(\sigma\) be a boundary-compatible singular sector.  A real amplitude
vector \(A=(A_e)_{e\in E}\) produces a nodal singular state satisfying all
internal Kirchhoff conditions if and only if
\begin{equation}
     B_{\sigma,\mathrm{bc}}A=0.
     \label{eq:singular-kirchhoff-kernel}
\end{equation}
\end{proposition}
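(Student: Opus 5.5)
The plan is to reduce everything to a vertex-by-vertex bookkeeping of the Kirchhoff flux, after checking that the prescribed sector already takes care of every edge-local condition. Fix the sector \(\sigma\) and the chosen edge orientations. Take a real vector \(A\) and define on each edge the nodal mode \(u_e(x)=A_e\sin(kx)\), with \(\theta_e=k\ell_e\) in the phase class prescribed by \(\sigma\).

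First I would verify the edge-local requirements, which hold for every \(A\) by the choice of phases. The mode vanishes at the origin of each edge, so it is continuous at the initial vertex with common value \(0\) there. For an internal--internal edge, \(\theta_e\in\pi\mathbb Z\) gives \(u_e(\ell_e)=A_e\sin\theta_e=0\), so the mode also takes the value \(0\) at the terminal vertex. For a Dirichlet terminal edge, \(\theta_e\in\pi\mathbb Z\) gives the exterior Dirichlet condition. For a Neumann terminal edge, \(\theta_e\in\pi/2+\pi\mathbb Z\) gives \(u_e'(\ell_e)=kA_e\cos\theta_e=0\). Consequently every internal vertex carries the common value \(0\), and continuity at the internal vertices is automatic. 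The only remaining conditions are the Kirchhoff flux conditions at internal vertices.

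Second, I would compute the outward normalized derivatives and check that they match the columns of \(B_{\sigma,\mathrm{bc}}\).
\begin{itemize}
\item At the initial endpoint the outward derivative is \(u_e'(0)=kA_e\), giving the normalized flux \(A_e\).
\item At the terminal endpoint of an internal--internal edge it is \(-u_e'(\ell_e)=-kA_e\cos\theta_e\), giving \(-\sigma_eA_e\). This is \eqref{eq:internal-internal-fluxes}.
\item For a loop at \(v\), both contributions land in row \(v\) and add to \((1-\sigma_e)A_e\).
\item Terminal edges are oriented away from their internal endpoint, so they contribute only \(A_e\) at that endpoint. No Kirchhoff row is imposed at exterior vertices.
\end{itemize}
Summing the contributions at an internal vertex \(v\) then gives exactly \((B_{\sigma,\mathrm{bc}}A)_v\), read off from \eqref{eq:B-internal-column}, \eqref{eq:B-loop-column} and \eqref{eq:B-terminal-column}. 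Hence the Kirchhoff condition at every internal vertex holds if and only if \(B_{\sigma,\mathrm{bc}}A=0\), and this proves both directions at once.

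The main obstacle is a point of interpretation rather than computation. A ``nodal singular state'' is understood here to have the form \(A_e\sin(kx)\) in the chosen orientation, so that it vanishes at all internal vertices. I would state this convention explicitly, so that the converse direction does not need to classify arbitrary states.

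I would also check that reversing an edge's orientation changes only signs consistently. Under reversal, \(A_e\mapsto -\sigma_eA_e\) and the column is multiplied by \(-\sigma_e\). The kernel condition is therefore unchanged up to this relabeling, and the characterization is independent of the representative orientation.
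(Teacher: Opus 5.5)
Your proposal is correct and follows the same route as the paper: both identify the \(v\)-th entry of \(B_{\sigma,\mathrm{bc}}A\) with the sum of normalized outward derivatives \(k^{-1}\sum_{e\sim v}\partial_\nu u_e(v)\), with loops counted twice, so the kernel condition is exactly the set of internal Kirchhoff conditions. Your explicit checks make visible what the paper leaves implicit: that the sector phases already force the value \(0\) at every internal vertex and enforce the exterior Dirichlet/Neumann conditions, and that reversing an orientation (\(A_e\mapsto-\sigma_eA_e\), column scaled by \(-\sigma_e\)) leaves the kernel condition unchanged.
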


\begin{proof}
For each edge, the normalized outward derivatives at internal endpoints are
exactly the coefficients listed in
\eqref{eq:internal-internal-fluxes},
\eqref{eq:dirichlet-terminal-flux}, and
\eqref{eq:neumann-terminal-flux}.  The \(v\)-th component of
\(B_{\sigma,\mathrm{bc}}A\) is therefore
\(k^{-1}\sum_{e\sim v}\partial_\nu u_e(v)\), with loop incidences counted
twice.  Hence, \eqref{eq:singular-kirchhoff-kernel} is precisely the collection
of internal Kirchhoff flux conditions.
\end{proof}

\begin{remark}[Complex amplitudes add no new intensities]
\label{rem:complex-singular-amplitudes}
The matrix \(B_{\sigma,\mathrm{bc}}\) is real.  If
\(A=X+iY\in\ker B_{\sigma,\mathrm{bc}}\), then
\(X,Y\in\ker B_{\sigma,\mathrm{bc}}\), and
$
       |A|^{\circ2}=X^{\circ2}+Y^{\circ2}.
$
Thus, complex singular amplitudes generate no intensity vectors beyond the
conic hull generated by real amplitudes.
\end{remark}

\subsection{Switching invariance}
\label{subsec:switching-invariance}

The signed incidence representation depends on harmless choices: edge
orientation, amplitude sign, and the sign used to write an individual
Kirchhoff equation.  The singular intensity geometry does not.

\begin{proposition}[Switching invariance]
\label{prop:switching-invariance}
Suppose two representatives of the same boundary-compatible singular sector
satisfy
\begin{equation}
      B'_{\sigma,\mathrm{bc}}
      =R\,B_{\sigma,\mathrm{bc}}\,S,
      \label{eq:switching-transform}
\end{equation}
where \(R\) and \(S\) are diagonal matrices with diagonal entries in
\(\{\pm1\}\).  Then
\begin{equation}
 \left\{A^{\circ2}:A\in\ker B'_{\sigma,\mathrm{bc}}\right\}
 =
 \left\{A^{\circ2}:A\in\ker B_{\sigma,\mathrm{bc}}\right\}.
 \label{eq:switching-generator-invariance}
\end{equation}
Consequently the singular-completion cone, its normalized slice, the sector
Rayleigh quotient, and the dual positivity condition are invariant under
switching.
\end{proposition}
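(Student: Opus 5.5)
The plan is to compute the two kernels against each other. First, since \(R\) and \(S\) are diagonal with entries \(\pm1\), they are invertible and each is its own inverse: \(R^{-1}=R\) and \(S^{-1}=S\). Because \(R\) is invertible, \(B'_{\sigma,\mathrm{bc}}A=0\) is equivalent to \(B_{\sigma,\mathrm{bc}}SA=0\). Hence
\[
\ker B'_{\sigma,\mathrm{bc}}=S\ker B_{\sigma,\mathrm{bc}}
:=\{SA': A'\in\ker B_{\sigma,\mathrm{bc}}\}.
\]

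Next, I would observe that the componentwise square does not see sign flips. For every \(A\in\mathbb R^E\) we have \((SA)^{\circ2}=A^{\circ2}\), since \((S_{ee}A_e)^2=A_e^2\). Combining this with the kernel identity, the map \(A'\mapsto SA'\) is a bijection \(\ker B_{\sigma,\mathrm{bc}}\to\ker B'_{\sigma,\mathrm{bc}}\) that preserves \(A^{\circ2}\). This gives \eqref{eq:switching-generator-invariance} directly. If complex amplitudes are allowed, the same argument applies with \(|A|^{\circ2}\) in place of \(A^{\circ2}\); alternatively, \cref{rem:complex-singular-amplitudes} reduces that case to real amplitudes.

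For the consequences, the singular-completion cone \eqref{eq:intro-singular-cone} is the conic hull of the union over sectors of these generator sets. A change of representative replaces each \(B_{\sigma,\mathrm{bc}}\) by some \(R_\sigma B_{\sigma,\mathrm{bc}}S_\sigma\), so each generator set is unchanged and therefore \(\mathcal M_G^{\mathrm{bc}}\) is unchanged. The normalized slice \(\{s:\ell\cdot s=1\}\) and \(C_{\mathrm{sing}}\) are defined purely from the cone, so they are invariant as well.

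The sector Rayleigh quotient \(\alpha\cdot A^{\circ2}/\ell\cdot A^{\circ2}\) over \(\ker B_{\sigma,\mathrm{bc}}\) depends on \(A\) only through \(A^{\circ2}\), so it takes the same set of values for both representatives. The dual positivity condition asks that \(d\cdot s\ge0\) (or \(>0\)) on the cone, equivalently on every generator, so it is also invariant.

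I do not expect a real obstacle. The only point requiring care is checking that edge reorientation, amplitude sign changes, and multiplying Kirchhoff rows by \(-1\) really act as \(S\), \(S\), and \(R\) respectively. I would nonetheless take the transformation law \eqref{eq:switching-transform} as given by the hypothesis.
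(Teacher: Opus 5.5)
Your proof is correct and matches the paper's argument essentially verbatim. Invertibility of \(R\) and \(S^{-1}=S\) give \(\ker B'_{\sigma,\mathrm{bc}}=S\ker B_{\sigma,\mathrm{bc}}\), and \((SA)^{\circ2}=A^{\circ2}\) then yields the generator identity; for the consequences, the paper phrases the invariance of the Rayleigh quotient and the dual condition as \(SDS=D\) for diagonal \(D\), which is the same observation as your remark that these quantities depend on \(A\) only through \(A^{\circ2}\).
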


\begin{proof}
Since \(R\) is invertible,
\[
   B'_{\sigma,\mathrm{bc}}A=0
   \quad\Longleftrightarrow\quad
   B_{\sigma,\mathrm{bc}}SA=0.
\]
Hence,
$
   \ker B'_{\sigma,\mathrm{bc}}
   =S\,\ker B_{\sigma,\mathrm{bc}},
$
because \(S^{-1}=S\).  Coordinatewise squaring removes the signs:
\((SA)^{\circ2}=A^{\circ2}\), proving
\eqref{eq:switching-generator-invariance}.  Since
\(S D S=D\) for every diagonal matrix \(D\), all diagonal quadratic forms used
below are unchanged as well.
\end{proof}

\begin{remark}[Sector relabeling and intrinsicity]
\label{rem:sector-relabeling}
The choice of edge orientations and phase representatives is auxiliary.
Reversing the orientation of an internal--internal edge may change the
displayed parity representative and the corresponding column signs, while
shifting an admissible terminal phase by \(\pi\) may change the sign of its
edge amplitude. Such transformations only switch or relabel the
boundary-compatible sector representatives and do not change the associated
squared-amplitude vectors. Consequently, the union over all admissible
sectors, and hence the boundary-aware singular-completion cone
\(\mathcal M_G^{\mathrm{bc}}\), is independent of these choices.
\end{remark}

\subsection{The singular-completion cone}
\label{subsec:singular-completion-cone}

For \(A=(A_e)_{e\in E}\), write
\[
      A^{\circ2}:=(A_e^2)_{e\in E}.
\]

\begin{definition}[Boundary-aware singular-completion cone]
\label{def:singular-completion-cone}
Let \(\Sigma_{\mathrm{bc}}\) be the finite collection of
boundary-compatible singular sectors, modulo the switching equivalence of
\cref{prop:switching-invariance}.  Define
\begin{equation}
   \mathcal M_G^{\mathrm{bc}}
   :=
   \operatorname{cone}
   \bigcup_{\sigma\in\Sigma_{\mathrm{bc}}}
   \left\{
       A^{\circ2}:
       A\in\ker B_{\sigma,\mathrm{bc}}
   \right\}.
   \label{eq:singular-completion-cone}
\end{equation}
\end{definition}
Let
\[
      D_\ell:=\operatorname{diag}(\ell_e)_{e\in E},
      \qquad
      W_\alpha:=\operatorname{diag}(\alpha_e)_{e\in E},
\]
with \(\ell_e>0\) and \(\alpha_e\ge0\).  Define the normalized singular slice
\begin{equation}
    \mathcal S_G^{\mathrm{bc}}(\ell)
    :=
    \left\{
       r\in\mathcal M_G^{\mathrm{bc}}:
       \ell\cdot r=1
    \right\}.
    \label{eq:normalized-singular-slice}
\end{equation}

\begin{definition}[Singular-completion constant]
\label{def:Csing}
Define
\begin{equation}
  C_{\mathrm{sing}}(\alpha;\ell)
  :=
  \min_{\sigma\in\Sigma_{\mathrm{bc}}}
  \inf_{\substack{
        A\in\ker B_{\sigma,\mathrm{bc}}\\
        A\ne0}}
  \frac{A^*W_\alpha A}{A^*D_\ell A},
  \label{eq:Csing-rayleigh}
\end{equation}
omitting sectors with trivial kernel.
\end{definition}

\begin{proposition}[Optimization over singular generators]
\label{prop:Csing-cone}
One has
\begin{equation}
   C_{\mathrm{sing}}(\alpha;\ell)
   =
   \min_{r\in\mathcal S_G^{\mathrm{bc}}(\ell)}
   \alpha\cdot r.
   \label{eq:Csing-cone-form}
\end{equation}
Moreover, the minimum is attained by a normalized generator from one sector.
\end{proposition}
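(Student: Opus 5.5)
The plan is to prove both inequalities between the Rayleigh-quotient definition \eqref{eq:Csing-rayleigh} and the cone form \eqref{eq:Csing-cone-form}, and then to get attainment from compactness of a unit sphere in each kernel. Throughout, for a real amplitude vector \(A\) write \(r=A^{\circ2}\). The basic identities are
\[
A^*W_\alpha A=\alpha\cdot A^{\circ2},
\qquad
A^*D_\ell A=\ell\cdot A^{\circ2}.
\]
By \cref{rem:complex-singular-amplitudes}, a complex \(A=X+iY\) in the kernel gives a quotient that is a weighted average of the quotients of \(X\) and \(Y\). It is therefore no smaller than the smaller of the two, so the infimum in \eqref{eq:Csing-rayleigh} may be taken over real kernel vectors.

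\emph{Step 1: \(\ge\).} Take \(r\in\mathcal S_G^{\mathrm{bc}}(\ell)\). By definition of the cone, \(r=\sum_i c_i A_i^{\circ2}\) with \(c_i\ge0\) and each \(A_i\) a nonzero real vector in \(\ker B_{\sigma_i,\mathrm{bc}}\); this is a finite sum, by Carathéodory if needed. Since \(\ell_e>0\), each \(\ell\cdot A_i^{\circ2}>0\). Set \(t_i=c_i\,\ell\cdot A_i^{\circ2}\), so that \(\sum_i t_i=\ell\cdot r=1\). Then
\[
\alpha\cdot r=\sum_i t_i\,\frac{\alpha\cdot A_i^{\circ2}}{\ell\cdot A_i^{\circ2}}
\]
is a convex combination of sector Rayleigh quotients. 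Each quotient is at least \(C_{\mathrm{sing}}(\alpha;\ell)\), and hence so is \(\alpha\cdot r\).

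\emph{Step 2: \(\le\) and attainment.} Fix a sector \(\sigma\) with nontrivial kernel. The quotient \(A\mapsto A^*W_\alpha A/A^*D_\ell A\) is continuous and homogeneous of degree zero on \(\ker B_{\sigma,\mathrm{bc}}\setminus\{0\}\). Its denominator is positive definite because \(D_\ell\) is. So the quotient attains its infimum on the compact set \(\{A\in\ker B_{\sigma,\mathrm{bc}}: A^*D_\ell A=1\}\), at some \(A_\sigma\). This infimum is the smallest generalized eigenvalue of the compressed pencil.

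Only finitely many sectors appear, so the outer minimum is attained at some \(\sigma_0\). The vector \(r_0=A_{\sigma_0}^{\circ2}\) lies in \(\mathcal M_G^{\mathrm{bc}}\), satisfies \(\ell\cdot r_0=1\), and has \(\alpha\cdot r_0=C_{\mathrm{sing}}(\alpha;\ell)\). This gives the reverse inequality and shows that the minimum in \eqref{eq:Csing-cone-form} exists and is attained by a normalized generator from a single sector. Combining with Step 1 yields equality.

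There is no serious obstacle. The one point needing care is the cone in Step 1: \(\mathcal M_G^{\mathrm{bc}}\) is the conic hull of a union over sectors, and its elements are finite nonnegative combinations of squared kernel vectors from possibly different sectors. The averaging argument handles this uniformly because the denominators \(\ell\cdot A_i^{\circ2}\) are strictly positive. One should also check that some sector has nontrivial kernel, so that both sides are finite; otherwise both sides are \(+\infty\), or the slice is empty, and the identity holds vacuously. Switching invariance (\cref{prop:switching-invariance}) ensures that the choice of representatives does not affect either side.
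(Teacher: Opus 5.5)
Your proof is correct and follows the paper's own argument: you use the identity \(\alpha\cdot A^{\circ2}/\ell\cdot A^{\circ2}=A^*W_\alpha A/A^*D_\ell A\), observe that the quotient of a conic combination is a convex combination of generator quotients, and get attainment from compactness of the ellipsoid \(\{A^*D_\ell A=1\}\) in each kernel together with finiteness of the sector family. Your additional remarks on reducing complex amplitudes to real ones and on the vacuous case are harmless refinements that the paper leaves implicit.
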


\begin{proof}
For every nonzero generator \(r=A^{\circ2}\),
\[
   \frac{\alpha\cdot r}{\ell\cdot r}
   =
   \frac{A^*W_\alpha A}{A^*D_\ell A}.
\]
If \(r=\sum_j t_jr^{(j)}\) with \(t_j\ge0\), then
\[
 \frac{\alpha\cdot r}{\ell\cdot r}
 =
 \sum_j
 \frac{t_j\,\ell\cdot r^{(j)}}{\sum_i t_i\,\ell\cdot r^{(i)}}
 \frac{\alpha\cdot r^{(j)}}{\ell\cdot r^{(j)}}.
\]
Thus, the quotient of a conic combination is a convex combination of generator
quotients and cannot improve on the best generator.  For each fixed sector,
normalizing by \(A^*D_\ell A=1\) gives a compact ellipsoid in the kernel, so
the sector minimum is attained.  There are only finitely many sectors.
\end{proof}

\subsection{Generalized eigenvalues and the dual cone}
\label{subsec:singular-dual}

Fix \(\sigma\), and let \(N_\sigma\) be a full-column-rank basis matrix for
\(\ker B_{\sigma,\mathrm{bc}}\).

\begin{proposition}[Sector generalized eigenvalue]
\label{prop:sector-generalized-eigenvalue}
For every nontrivial sector,
\begin{equation}
 C_{\mathrm{sing},\sigma}(\alpha;\ell)
 =
 \lambda_{\min}\!\left(
 N_\sigma^*W_\alpha N_\sigma,\,
 N_\sigma^*D_\ell N_\sigma
 \right),
 \label{eq:sector-generalized-eigenvalue}
\end{equation}
and
\[
 C_{\mathrm{sing}}(\alpha;\ell)
 =
 \min_{\sigma\in\Sigma_{\mathrm{bc}}}
 C_{\mathrm{sing},\sigma}(\alpha;\ell).
\]
\end{proposition}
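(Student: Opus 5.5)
The argument is a change of variables in the sector Rayleigh quotient of \cref{def:Csing}, followed by the standard Courant--Fischer characterization for a symmetric-definite pencil. Fix a nontrivial sector \(\sigma\) and define \(C_{\mathrm{sing},\sigma}(\alpha;\ell)\) as the inner infimum in \eqref{eq:Csing-rayleigh}. Since \(N_\sigma\) is a full-column-rank basis of \(\ker B_{\sigma,\mathrm{bc}}\), the map \(y\mapsto A=N_\sigma y\) is a linear bijection from \(\mathbb R^{d_\sigma}\) (or \(\mathbb C^{d_\sigma}\); by \cref{rem:complex-singular-amplitudes} the field does not matter) onto the kernel, with \(d_\sigma=\dim\ker B_{\sigma,\mathrm{bc}}\). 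It sends \(y\ne0\) to \(A\ne0\). Substituting gives
\[
\frac{A^*W_\alpha A}{A^*D_\ell A}
=
\frac{y^*(N_\sigma^*W_\alpha N_\sigma)y}{y^*(N_\sigma^*D_\ell N_\sigma)y},
\]
so the sector infimum equals the infimum of the reduced Rayleigh quotient over \(y\ne0\).

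Next I check the hypotheses of the generalized eigenvalue theory. The matrix \(M:=N_\sigma^*D_\ell N_\sigma\) is Hermitian and positive definite, because \(\ell_e>0\) gives \(y^*My=\sum_e\ell_e|(N_\sigma y)_e|^2>0\) whenever \(N_\sigma y\ne0\). Full column rank guarantees \(N_\sigma y\ne0\) for \(y\ne0\), and this is the one point where that hypothesis is used. The matrix \(K:=N_\sigma^*W_\alpha N_\sigma\) is Hermitian positive semidefinite. With the Cholesky factor \(M=LL^*\), set \(z=L^*y\). The quotient then becomes the ordinary Rayleigh quotient of the Hermitian matrix \(L^{-1}KL^{-*}\). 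Its infimum over \(z\ne0\) is attained and equals \(\lambda_{\min}(L^{-1}KL^{-*})\). This number is exactly the smallest root of \(\det(K-\lambda M)=0\), that is, \(\lambda_{\min}(K,M)\). This proves \eqref{eq:sector-generalized-eigenvalue}.

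The second identity follows directly from \cref{def:Csing}. There \(C_{\mathrm{sing}}\) is the minimum over the finitely many nontrivial sectors of the inner infima, and each inner infimum is \(C_{\mathrm{sing},\sigma}\). Sectors with trivial kernel are omitted in both expressions.

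The remaining issue is independence of the choice of basis. If \(N'_\sigma=N_\sigma T\) with \(T\) invertible, both reduced matrices undergo the congruence \(X\mapsto T^*XT\). This congruence leaves the generalized spectrum unchanged, since \(\det(T^*(K-\lambda M)T)=|\det T|^2\det(K-\lambda M)\). By \cref{prop:switching-invariance}, the value is also independent of the switching representative of \(\sigma\). No step here is a genuine obstacle; the only care needed is the positive definiteness of the reduced denominator.
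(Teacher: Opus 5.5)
Your proof is correct and matches the paper's approach. The paper gives no separate proof and treats the identity as immediate from the Rayleigh-quotient form of \cref{def:Csing}, with attainment via the normalization $A^*D_\ell A=1$ from the proof of \cref{prop:Csing-cone}; your substitution $A=N_\sigma y$, the positive definiteness of $N_\sigma^*D_\ell N_\sigma$, and the Cholesky reduction supply exactly that step, and the basis-independence and switching remarks are correct but not needed for the statement.
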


For \(d\in\mathbb R^E\), set \(D_d=\operatorname{diag}(d_e)\).
\begin{proposition}[Dual cone]
\label{prop:singular-dual-cone}
A vector \(d\in\mathbb R^E\) belongs to
\((\mathcal M_G^{\mathrm{bc}})^*\) if and only if
\begin{equation}
       N_\sigma^*D_dN_\sigma\succeq0
       \qquad
       \text{for every }\sigma\in\Sigma_{\mathrm{bc}}.
       \label{eq:dual-sector-lmi}
\end{equation}
Equivalently,
\[
    A^*D_dA\ge0
    \qquad
    \text{for every }
    A\in\ker B_{\sigma,\mathrm{bc}}
    \text{ and every }\sigma.
\]
\end{proposition}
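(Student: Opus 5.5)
The plan is to prove the two assertions in turn, working one fixed nontrivial sector \(\sigma\) at a time. Both are reformulations of \cref{def:Csing}, \cref{prop:Csing-cone} and \cref{def:singular-completion-cone}, using that \(W_\alpha\), \(D_\ell\), \(D_d\) are real diagonal.

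\emph{Generalized eigenvalue.} Every \(A\in\ker B_{\sigma,\mathrm{bc}}\) can be written \(A=N_\sigma y\) for a unique \(y\). Since \(N_\sigma\) has full column rank, \(A\ne0\) exactly when \(y\ne0\). The sector Rayleigh quotient in \eqref{eq:Csing-rayleigh} therefore becomes
\[
\frac{y^*N_\sigma^*W_\alpha N_\sigma y}{y^*N_\sigma^*D_\ell N_\sigma y}.
\]
Because \(\ell_e>0\), the denominator matrix \(N_\sigma^*D_\ell N_\sigma\) is positive definite. Indeed, \(y^*N_\sigma^*D_\ell N_\sigma y=\sum_e\ell_e|A_e|^2>0\) for \(y\ne0\). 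I would then apply the Courant--Fischer characterization of the smallest eigenvalue of a Hermitian pencil with positive definite second matrix. Concretely, substitute \(y=(N_\sigma^*D_\ell N_\sigma)^{-1/2}z\), which reduces the quotient to an ordinary Rayleigh quotient whose infimum is attained and equals \(\lambda_{\min}\). This gives \eqref{eq:sector-generalized-eigenvalue}. The identity \(C_{\mathrm{sing}}=\min_\sigma C_{\mathrm{sing},\sigma}\) is then just \cref{def:Csing}, since sectors with trivial kernel are omitted. One small point needs checking: if \(N_\sigma\) is taken complex, the quotient over the complex kernel agrees with the quotient over the real kernel, by \cref{rem:complex-singular-amplitudes}. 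The reason is that \(A^*DA=X^TDX+Y^TDY\) for diagonal \(D\), so the complex quotient is a mediant of two real quotients.

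\emph{Dual cone.} By definition, \((\mathcal M_G^{\mathrm{bc}})^*=\{d: d\cdot s\ge0\ \forall s\in\mathcal M_G^{\mathrm{bc}}\}\). A linear functional is nonnegative on a conic hull exactly when it is nonnegative on the generators. So \(d\) lies in the dual cone if and only if \(d\cdot A^{\circ2}\ge0\) for every \(A\in\ker B_{\sigma,\mathrm{bc}}\) and every \(\sigma\). For real \(A\) we have \(d\cdot A^{\circ2}=A^TD_dA\). For complex \(A=X+iY\) in the kernel, \(A^*D_dA=X^TD_dX+Y^TD_dY\), so the real and complex versions of the condition coincide. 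This is the second formulation in the statement. Writing \(A=N_\sigma y\) turns it into \(y^*N_\sigma^*D_dN_\sigma y\ge0\) for all \(y\), which is exactly \(N_\sigma^*D_dN_\sigma\succeq0\).

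Both steps are essentially routine. The only care needed is the bookkeeping between real and complex kernels and between kernel vectors and coordinates \(y\), handled as above. I do not expect a genuine obstacle.
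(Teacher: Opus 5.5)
Your dual-cone argument is correct and essentially matches the paper's proof: nonnegativity on the conic hull reduces to nonnegativity on the generators $A^{\circ2}$, one has $d\cdot A^{\circ2}=A^*D_dA$, and substituting $A=N_\sigma z$ gives the sectorwise semidefinite condition. Your generalized-eigenvalue paragraph, which proves the preceding proposition rather than this one, and your real/complex kernel bookkeeping are extra, but harmless and correct.
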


\begin{proof}
By \eqref{eq:singular-completion-cone},
\(d\in(\mathcal M_G^{\mathrm{bc}})^*\) precisely when
\[
       d\cdot A^{\circ2}=A^*D_dA\ge0
\]
for every singular generator.  Writing \(A=N_\sigma z\) gives
\[
       z^*N_\sigma^*D_dN_\sigma z\ge0
       \qquad\text{for every }z,
\]
which is equivalent to \eqref{eq:dual-sector-lmi}.
\end{proof}

Hence, a regular intensity \(r^\star\) lies strictly outside
\(\mathcal M_G^{\mathrm{bc}}\) whenever one can exhibit
\[
       N_\sigma^*D_dN_\sigma\succeq0
       \quad\text{for all }\sigma,
       \qquad
       d\cdot r^\star<0.
\]
This is the separation mechanism used later.

\begin{remark}[Relation to singular secular states]
\label{rem:topological-secular-states}
The distinction between regular secular zeros and eigenstates occurring at
singularities of a secular formulation is not merely formal.  Harrell and
Maltsev give a systematic recent treatment of topological bound states of
quantum graphs that can lie at singularities of vertex-scattering secular
matrices \cite{HarrellMaltsev2024}.  Our boundary-aware sectors are derived
directly from the edge ODE and Kirchhoff flux equations, so the cone
\(\mathcal M_G^{\mathrm{bc}}\) does not rely on a singular matrix formula
being valid at those phases.
\end{remark}

\subsection{Primitive scars embed into singular completion}
\label{subsec:primitive-embedding}

We now verify the enlargement from Section~\ref{sec:primitive-supports}
case by case.

\begin{proposition}[Primitive-scar embedding]
\label{prop:primitive-contained-singular}
Every boundary-compatible primitive scar intensity belongs to
\(\mathcal M_G^{\mathrm{bc}}\).  More precisely, for every simple cycle or
simple exterior-to-exterior path \(K\), with any fixed endpoint type
\(DD,DN,ND,\) or \(NN\) in the path case, there is a boundary-compatible
singular sector \(\sigma\) and a nonzero
\(A^K\in\ker B_{\sigma,\mathrm{bc}}\) such that
\begin{equation}
        (A^K_e)^2=
        \begin{cases}
           1,& e\subset K,\\
           0,& e\not\subset K.
        \end{cases}
        \label{eq:primitive-square-generator}
\end{equation}
After normalization by \(\ell\cdot(A^K)^{\circ2}=L_K\), this gives the
primitive arclength intensity \(L_K^{-1}\mathbf 1_K\).
\end{proposition}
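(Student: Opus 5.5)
The plan is to construct, for each primitive support \(K\), an explicit sector \(\sigma\) and an explicit amplitude vector \(A^K\) with entries in \(\{0,\pm1\}\), supported on \(K\), and then check \(B_{\sigma,\mathrm{bc}}A^K=0\) row by row. Because of \cref{prop:switching-invariance} and \cref{rem:sector-relabeling}, we may fix convenient orientations and phase representatives. Edges outside \(K\) receive amplitude \(0\) and an arbitrary admissible phase class. Their columns then contribute nothing to \(B_{\sigma,\mathrm{bc}}A^K\), so only the internal vertices of \(K\) need checking.

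\emph{Cycle case.} Write \(K=(v_0,e_1,v_1,\dots,e_n,v_n=v_0)\) and orient each \(e_j\) from \(v_{j-1}\) to \(v_j\). All vertices of a simple cycle are internal, since exterior vertices have degree one.
\begin{itemize}
\item If \(K\) is a loop (\(n=1\)), choose \(\sigma_{e_1}=+1\). The column entry \(1-\sigma_{e_1}=0\), so \(A_{e_1}=1\) lies in the kernel.
\item If \(n\ge2\), choose \(\sigma_{e_j}=+1\) for every \(j\) and \(A_{e_j}=1\). At \(v_j\), the incoming edge \(e_j\) contributes \(-\sigma_{e_j}A_{e_j}=-1\) and the outgoing edge \(e_{j+1}\) contributes \(+A_{e_{j+1}}=+1\), so the row sums to zero.
\end{itemize}
More generally, one may take any parities and define signs recursively by \(A_{e_{j+1}}=\sigma_{e_j}A_{e_j}\). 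The all-\(+1\) choice avoids any closing condition around the cycle.

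\emph{Path case.} Let \(K\) join exterior vertices \(w_0\) and \(w_n\) through internal vertices \(v_1,\dots,v_{n-1}\) with edges \(e_1,\dots,e_n\). The terminal edges \(e_1\) and \(e_n\) must be oriented from their internal endpoints outward, as \cref{rem:where-bc-enters} requires. Choose the sector and amplitudes as follows.
\begin{itemize}
\item For \(n=1\), the edge \(e_1\) joins two exterior vertices. This degenerate case has no internal row, so I would either treat it directly as a single nodal mode with matching phase, or note that it is excluded by the graph convention.
\item For \(n\ge2\), orient the internal edges \(e_2,\dots,e_{n-1}\) from \(v_{j-1}\) to \(v_j\) with \(\sigma=+1\).
\item Give \(e_1\), oriented from \(v_1\) toward \(w_0\), amplitude \(A_{e_1}=-1\). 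Its phase class is \(\pi\mathbb Z\) if \(w_0\) is Dirichlet and \(\pi/2+\pi\mathbb Z\) if \(w_0\) is Neumann, as in \cref{subsec:boundary-singular-modes}; the same rule at \(w_n\) handles \(e_n\).
\end{itemize}
Each row is then checked.
\begin{itemize}
\item At \(v_1\), the row reads \(A_{e_1}+A_{e_2}=0\) when \(n\ge3\). When \(n=2\), the row at \(v_1\) involves the two terminal columns, each with entry \(+1\), so the amplitudes are taken as \(+1\) and \(-1\).
\item At an intermediate \(v_j\), the check is identical to the cycle case.
\item At \(v_{n-1}\), the contributions are \(-\sigma_{e_{n-1}}A_{e_{n-1}}+A_{e_n}\). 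Choosing \(A_{e_n}=A_{e_{n-1}}\) makes this vanish.
\end{itemize}
All amplitudes are \(\pm1\) on \(K\), which gives \eqref{eq:primitive-square-generator}.

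The endpoint type \(DD\), \(DN\), \(ND\), or \(NN\) enters only through the phase class of the terminal columns and never through the matrix entries, which are always \(+1\). This is what makes the result uniform across the four types. The final normalization is immediate: \(\ell\cdot(A^K)^{\circ2}=\sum_{e\subset K}\ell_e=L_K\).

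The main obstacle is not algebraic. It is making sure the chosen data is a legitimate sector of \(\Sigma_{\mathrm{bc}}\), meaning the phase assignments are admissible classes modulo the equivalence, and that the sign conventions for terminal columns in \eqref{eq:B-terminal-column} match the chosen orientations. I would settle this first by invoking \cref{rem:sector-relabeling}, and only then carry out the row checks.
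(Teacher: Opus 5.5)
Your proposal is correct and follows essentially the paper's own proof. You use the same loop/cycle/path case split, fix the $\pm1$ amplitudes by recursive cancellation in each internal Kirchhoff row, and let the Dirichlet/Neumann type enter only through the terminal phase class, never through the $+1$ terminal column entry.

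One small correction to your aside. The recursion $A_{e_{j+1}}=\sigma_{e_j}A_{e_j}$ closes around the cycle only when $\prod_{e\subset K}\sigma_e=1$, which is the paper's parity-closure condition. So arbitrary parities do not work, and your all-even choice works because it satisfies this closing condition, not because it avoids one.
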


\begin{proof}
We treat the possible primitive supports separately.

\emph{Simple cycle with at least two edge incidences.}
Let \(K\) be a simple cycle and orient its edges cyclically. Choose on each
edge \(e\subset K\) a singular phase in \(\pi\mathbb Z\), with associated
parity \(\sigma_e\in\{\pm1\}\), so that
\begin{equation}
    \prod_{e\subset K}\sigma_e=1.
    \label{eq:cycle-parity-closure}
\end{equation}
Such a choice is always possible; for example, all cycle phases may be chosen
even. Set \(A_e=0\) for every edge \(e\notin K\).

Choose one cycle edge \(e_1\) and set \(A_{e_1}=1\). Proceeding cyclically
around \(K\), choose the sign of each successive amplitude so that the two
active cycle contributions to the Kirchhoff equation cancel at their common
vertex. Each active amplitude therefore has modulus one. After one complete
turn around the cycle, the recursively determined sign agrees with the
initial choice precisely when the parity closure condition
\eqref{eq:cycle-parity-closure} holds. Hence, the recursion defines a
consistent amplitude vector \(A^K\) satisfying
\[
    A^K\in\ker B_{\sigma,\mathrm{bc}},
    \qquad
    |A_e^K|=
    \begin{cases}
        1, & e\in K,\\
        0, & e\notin K.
    \end{cases}
\]
Consequently,
\[
    (A^K)^{\circ2}
    =
    \mathbf 1_K
    \in
    \mathcal M_G^{\mathrm{bc}}.
\]
\emph{One-edge loop.}
Suppose that \(K=\{e\}\) is a loop based at a vertex \(v\). Choose an even
singular phase on \(e\), so that \(\sigma_e=1\), and set all other edge
amplitudes equal to zero. By \eqref{eq:B-loop-column}, the corresponding
loop column of the signed Kirchhoff matrix vanishes. Thus, the amplitude
vector \(A^K\) defined by
\[
    A_e^K=1,
    \qquad
    A_f^K=0
    \quad (f\neq e)
\]
satisfies
\[
    A^K\in\ker B_{\sigma,\mathrm{bc}}.
\]
Therefore,
\[
    (A^K)^{\circ2}
    =
    \mathbf 1_K
    \in
    \mathcal M_G^{\mathrm{bc}},
\]
which realizes the one-edge primitive cycle as a singular-completion
generator.

\emph{Exterior-to-exterior path.}
Let
\[
K=(e_1,\ldots,e_m)
\]
be a simple path oriented from one exterior endpoint to the other. Assign to
each internal--internal edge of \(K\) an admissible singular phase in
\(\pi\mathbb Z\). At a Dirichlet terminal edge, choose a phase in
\(\pi\mathbb Z\), whereas at a Neumann terminal edge choose a phase in
\(\pi/2+\pi\mathbb Z\). Set the amplitudes of all edges outside \(K\) equal
to zero.

Choose \(A_{e_1}\in\{\pm1\}\). Proceeding successively along the path, choose
\(A_{e_2},\ldots,A_{e_m}\in\{\pm1\}\) so that the two active path
contributions to the Kirchhoff equation cancel at each internal vertex of
\(K\). Since \(K\) is a path rather than a cycle, this recursion encounters
no closure condition and therefore determines a consistent amplitude vector
along the entire path. The exterior boundary conditions have already been
enforced through the admissible terminal phase classes and introduce no
additional Kirchhoff equation at either exterior endpoint. Hence, for the
resulting boundary-compatible sector \(\sigma\),
\[
A^K\in\ker B_{\sigma,\mathrm{bc}},
\qquad
|A_e^K|=
\begin{cases}
1, & e\in K,\\
0, & e\notin K.
\end{cases}
\]
In particular,
\[
(A^K)^{\circ2}=\mathbf 1_K
\in\mathcal M_G^{\mathrm{bc}}.
\]
The four possible endpoint combinations \(DD\), \(DN\), \(ND\), and \(NN\)
affect only the admissible phase classes of the two terminal edges. The
Kirchhoff recursion along the internal vertices is unchanged, so the same
construction applies in all four cases.
\end{proof}

\begin{corollary}[Primitive benchmark is dominated]
\label{cor:Csing-below-Cprim}
Whenever the primitive benchmark is formed from boundary-compatible simple
cycles and exterior-to-exterior paths,
\begin{equation}
    C_{\mathrm{sing}}(\alpha;\ell)
    \le
    C_{\mathrm{prim}}(\alpha;\ell).
    \label{eq:Csing-below-Cprim}
\end{equation}
\end{corollary}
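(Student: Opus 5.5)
The plan is to derive the inequality directly from the cone formulation of \(C_{\mathrm{sing}}\) in \cref{prop:Csing-cone}, using the explicit generators supplied by \cref{prop:primitive-contained-singular}. The primitive benchmark is
\[
C_{\mathrm{prim}}(\alpha;\ell)=\min_K \frac{|\omega\cap K|}{L_K}=\min_K\frac{\alpha\cdot\mathbf 1_K}{\ell\cdot\mathbf 1_K},
\]
where \(K\) ranges over the boundary-compatible simple cycles and exterior-to-exterior paths used to form it. This is exactly how it was computed in the \(\Theta_3\) example, since \(|\omega\cap K|=\sum_{e\subset K}\alpha_e\). The approach is to show that each term of this minimum is itself the value of \(\alpha\cdot r\) at some point \(r\) of the normalized singular slice.

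\textbf{Step 1.} Fix an admissible primitive support \(K\). \Cref{prop:primitive-contained-singular} gives a boundary-compatible sector \(\sigma\) and a nonzero \(A^K\in\ker B_{\sigma,\mathrm{bc}}\) with \((A^K)^{\circ2}=\mathbf 1_K\). Hence \(\mathbf 1_K\in\mathcal M_G^{\mathrm{bc}}\).

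\textbf{Step 2.} Since \(\ell_e>0\), we have \(\ell\cdot\mathbf 1_K=L_K>0\). Therefore \(r^K:=L_K^{-1}\mathbf 1_K\) lies in \(\mathcal S_G^{\mathrm{bc}}(\ell)\), because the cone is closed under positive scaling. By \eqref{eq:Csing-cone-form},
\[
C_{\mathrm{sing}}(\alpha;\ell)\le \alpha\cdot r^K=\frac{|\omega\cap K|}{L_K}.
\]

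\textbf{Step 3.} Minimize the bound of Step 2 over the finitely many admissible \(K\). This gives \eqref{eq:Csing-below-Cprim}.

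Equivalently, one can argue through the Rayleigh form \eqref{eq:Csing-rayleigh}: with \(A=A^K\), the quotient \(A^*W_\alpha A/A^*D_\ell A\) equals \((\alpha\cdot\mathbf 1_K)/(\ell\cdot\mathbf 1_K)\). In that version the kernel of the sector \(\sigma\) is nontrivial, so \(\sigma\) is not one of the omitted sectors.

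The only point needing care is bookkeeping rather than analysis. The phrase ``boundary-compatible'' must be read so that every support entering \(C_{\mathrm{prim}}\) is covered by \cref{prop:primitive-contained-singular}: cycles of length at least two, one-edge loops (using an even phase), and paths with any endpoint type \(DD\), \(DN\), \(ND\) or \(NN\). The sector produced for \(K\) is one member of \(\Sigma_{\mathrm{bc}}\), up to the switching equivalence of \cref{prop:switching-invariance}, which leaves squared amplitudes unchanged. Once this identification is checked, the corollary is immediate.
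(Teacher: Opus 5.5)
Your proposal is correct and follows the paper's proof essentially verbatim. The primitive-scar embedding places $\mathbf 1_K$ in $\mathcal M_G^{\mathrm{bc}}$, normalizing by $\ell\cdot\mathbf 1_K=L_K>0$ makes each primitive ratio an admissible competitor in \eqref{eq:Csing-cone-form}, and minimizing over the finitely many $K$ gives the inequality. Your Rayleigh-quotient variant and your bookkeeping remarks on loops, endpoint types and switching are harmless additions that the paper leaves implicit.
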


\begin{proof}
By the preceding proposition, the indicator vector of every
boundary-compatible primitive support \(K\) belongs to
\(\mathcal M_G^{\mathrm{bc}}\). After normalization by its metric mass,
\[
    s^K
    :=
    \frac{\mathbf 1_K}{\ell\cdot\mathbf 1_K}
    \in
    \left\{
        s\in\mathcal M_G^{\mathrm{bc}}:
        \ell\cdot s=1
    \right\}.
\]
Hence, each primitive-support observation ratio is an admissible competitor
for the singular-completion variational problem. Taking the minimum over all
primitive supports gives
\[
    C_{\mathrm{sing}}(\alpha;\ell)
    \le
    C_{\mathrm{prim}}(\alpha;\ell).
\]
\end{proof}

\subsection{What singular completion would have to prove}
\label{subsec:singular-sufficiency-criterion}

\begin{definition}[Regular scalar secular edge-energy cone]
\label{def:regular-secular-energy-cone}
Let
\begin{equation}
\mathcal R_G^{\mathrm{reg}}
:=
\left\{
r=(|A_e|^2+|B_e|^2)_{e\in E}:
(A,B)\ \text{arises from a regular scalar secular state}
\right\}
\subset[0,\infty)^E.
\label{eq:regular-secular-energy-cone}
\end{equation}
We call \(r\) an \emph{edge-energy vector}.  The corresponding homogenized
intensity is always \(q=r/2\).  We reserve the word \emph{intensity} for
\(q\), and use \(r\) for unnormalized edge energy throughout the structural
cone arguments.
\end{definition}
The structural question is
\begin{equation}
       \mathcal R_G^{\mathrm{reg}}
       \stackrel{?}{\subseteq}
       \mathcal M_G^{\mathrm{bc}}.
       \label{eq:regular-inside-singular}
\end{equation}

\begin{proposition}[Singular domination]
\label{prop:singular-domination}
If \eqref{eq:regular-inside-singular} holds, then every nonzero regular secular
edge-energy vector \(r\) satisfies
\begin{equation}
       \frac{\alpha\cdot r}{\ell\cdot r}
       \ge C_{\mathrm{sing}}(\alpha;\ell)
       \qquad
       \text{for every }\alpha\ge0.
       \label{eq:regular-singular-lower-bound}
\end{equation}
\end{proposition}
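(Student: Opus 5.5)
The plan is to reduce the claim to \cref{prop:Csing-cone}. Fix \(\alpha\ge0\) and a nonzero regular secular edge-energy vector \(r\in\mathcal R_G^{\mathrm{reg}}\). By the assumed inclusion \eqref{eq:regular-inside-singular}, \(r\in\mathcal M_G^{\mathrm{bc}}\). Since \(r\in[0,\infty)^E\) is nonzero and every \(\ell_e>0\), we have \(\ell\cdot r>0\). Hence the rescaled vector \(\tilde r:=r/(\ell\cdot r)\) is defined, lies in the cone, and satisfies \(\ell\cdot\tilde r=1\). Thus \(\tilde r\in\mathcal S_G^{\mathrm{bc}}(\ell)\), the normalized singular slice \eqref{eq:normalized-singular-slice}.

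Next, \cref{prop:Csing-cone} identifies \(C_{\mathrm{sing}}(\alpha;\ell)\) with the minimum of \(\alpha\cdot s\) over this slice. Therefore
\[
\frac{\alpha\cdot r}{\ell\cdot r}=\alpha\cdot\tilde r\ge\min_{s\in\mathcal S_G^{\mathrm{bc}}(\ell)}\alpha\cdot s=C_{\mathrm{sing}}(\alpha;\ell),
\]
which is \eqref{eq:regular-singular-lower-bound}.

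Alternatively, I would argue directly from the generators. Write \(r=\sum_j t_j (A^{(j)})^{\circ2}\) with \(t_j\ge0\) and each \(A^{(j)}\) in some sector kernel \(\ker B_{\sigma_j,\mathrm{bc}}\). As in the proof of \cref{prop:Csing-cone}, the quotient \(\alpha\cdot r/\ell\cdot r\) is then a convex combination of the sector Rayleigh quotients \(A^{(j)*}W_\alpha A^{(j)}/A^{(j)*}D_\ell A^{(j)}\). Each of these is at least \(C_{\mathrm{sing}}\) by \cref{def:Csing}.

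There is no real obstacle here. The only points needing care are the nondegeneracy \(\ell\cdot r>0\), which holds because \(r\ne0\) and \(r\ge0\), and the degenerate case where all sectors have trivial kernel. That case cannot occur under the hypothesis, since a nonzero \(r\) in the cone forces some nonzero generator and hence some sector with nontrivial kernel. Note also that \(r\) and \(q=r/2\) give the same quotient, so the statement transfers verbatim to homogenized intensities.
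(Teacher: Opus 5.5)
Your proposal is correct and follows the paper's own proof: normalize \(r\) by \(\ell\cdot r>0\), observe that the assumed inclusion puts the normalized vector in \(\mathcal S_G^{\mathrm{bc}}(\ell)\), and apply \cref{prop:Csing-cone}. Your remarks on why \(\ell\cdot r>0\) and why some sector kernel must be nontrivial supply details the paper leaves implicit.
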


\begin{proof}
Normalize \(r\) by \(\ell\cdot r\).  Under
\eqref{eq:regular-inside-singular}, the normalized vector belongs to
\(\mathcal S_G^{\mathrm{bc}}(\ell)\), and
\eqref{eq:Csing-cone-form} gives the result.
\end{proof}

\begin{corollary}[Criterion for exact singular reduction]
\label{cor:exact-singular-reduction}
Assume \eqref{eq:regular-inside-singular}.  If a singular minimizer
\(r_{\mathrm{sing}}\in\mathcal S_G^{\mathrm{bc}}(\ell)\) satisfying
\[
      \alpha\cdot r_{\mathrm{sing}}
      =C_{\mathrm{sing}}(\alpha;\ell)
\]
is attainable as a high-frequency intensity limit for the fixed metric
\(\ell\), then
\begin{equation}
      C_\infty(\omega;\ell)
      =C_{\mathrm{sing}}(\alpha;\ell).
      \label{eq:Cinf-equals-Csing}
\end{equation}
\end{corollary}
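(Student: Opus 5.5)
The argument combines the two inequalities of the hypotheses: an upper bound $C_\infty\le C_{\mathrm{sing}}$ from attainability of the singular minimizer, and a lower bound $C_\infty\ge C_{\mathrm{sing}}$ from the inclusion \eqref{eq:regular-inside-singular} together with \cref{thm:exact-variational-principle}. The normalizations must be handled with care throughout. The high-frequency problem is posed in $q$-coordinates with $\ell\cdot q=1$, while $\mathcal S_G^{\mathrm{bc}}(\ell)$ lives in $r$-coordinates with $\ell\cdot r=1$. All quotients $\alpha\cdot r/\ell\cdot r$ are homogeneous, so I will consistently compare the ratios $\alpha\cdot q/\ell\cdot q$. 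A normalized attainable $q$ then has ratio exactly $\alpha\cdot q$. I read ``attainable as a high-frequency intensity limit'' to mean that the normalized vector $r_{\mathrm{sing}}/(\ell\cdot r_{\mathrm{sing}})$, which equals $r_{\mathrm{sing}}$ itself, belongs to $I(\ell)$.

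\emph{Upper bound.} If $r_{\mathrm{sing}}\in I(\ell)$, then \cref{thm:exact-variational-principle} gives
$$C_\infty(\omega;\ell)=\min_{q\in I(\ell)}\alpha\cdot q\le \alpha\cdot r_{\mathrm{sing}}=C_{\mathrm{sing}}(\alpha;\ell).$$
This step is immediate.

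\emph{Lower bound.} Take any $q\in I(\ell)$, realized by exact eigenfunctions $u_n$ with $k_n\to\infty$ and $q(u_n)\to q$. I want to show $\alpha\cdot q\ge C_{\mathrm{sing}}$. For each $n$, the edge-energy vector $r(u_n)=2q(u_n)$ is the edge energy of a scalar secular state. If that state is regular, \cref{prop:singular-domination} gives
$$\frac{\alpha\cdot q(u_n)}{\ell\cdot q(u_n)}\ge C_{\mathrm{sing}}.$$
By \eqref{eq:normalization-asymptotic}, $\ell\cdot q(u_n)\to1$, so passing to the limit yields $\alpha\cdot q\ge C_{\mathrm{sing}}$. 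The ratio is continuous wherever $\ell\cdot q\ne0$, and here $\ell\cdot q=1$, so the limit is justified. Minimizing over $q\in I(\ell)$ gives $C_\infty\ge C_{\mathrm{sing}}$.

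\emph{Main obstacle.} The difficulty is that some $u_n$ may be \emph{singular} secular states rather than regular ones, and then \cref{prop:singular-domination} does not apply directly. I would handle this in two ways.

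First, when the phase point $k_n\ell \bmod 2\pi$ lies at a singular configuration, I would check whether the edge expansion of the eigenfunction, decomposed via the Kirchhoff flux equations as in \cref{prop:kirchhoff-kernel-characterization}, is a nodal singular state. In that case its energy vector lies in $\mathcal M_G^{\mathrm{bc}}$ by definition, the ratio bound follows from \eqref{eq:Csing-cone-form}, and \cref{rem:complex-singular-amplitudes} covers complex amplitudes.

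Second, for eigenfunctions that mix nodal and regular components, or in any case the first route misses, I would use the closedness of cones. Both $\mathcal M_G^{\mathrm{bc}}$ (a finite union of images of kernels under squaring, whose conic hull is closed) and the closure of $\mathcal R_G^{\mathrm{reg}}$ enter the argument. I would show that singular secular energies are limits of regular ones, by perturbing the phase point along $Z_G$. Then the inclusion \eqref{eq:regular-inside-singular} passes to closures, and the ratio bound holds for all secular energies.

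If this perturbation argument proves delicate, the fallback is to observe that the statement is conditional on \eqref{eq:regular-inside-singular}. I would then interpret $\mathcal R_G^{\mathrm{reg}}$ as including every secular state arising from exact eigenfunctions, which is what its use in the paper requires.
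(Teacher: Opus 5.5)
\textbf{Gap in the lower bound.} Your upper bound is correct and is the intended one. The paper gives no separate proof of this corollary. It is meant as the direct combination of \cref{thm:exact-variational-principle}, where attainability of $r_{\mathrm{sing}}$ gives $C_\infty\le C_{\mathrm{sing}}$, and \cref{prop:singular-domination} for the reverse inequality.

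The gap is in your lower bound, exactly where you flag it, and the proposal does not close it. Hypothesis \eqref{eq:regular-inside-singular} only controls energy vectors of \emph{regular} secular states. By contrast, $I(\ell)$ is generated by all exact eigenfunctions of the fixed metric. For a non-generic $\ell$ (equilateral metrics, for instance), infinitely many eigenfrequencies can sit at singular configurations, with some $k_n\ell_e\in\pi\mathbb Z$ or a multi-dimensional eigenspace. None of your three routes handles these:
\begin{itemize}
\item Your first route covers only nodal states, i.e.\ eigenfunctions vanishing at every internal vertex. An eigenfunction at a singular phase point with nonzero vertex values is not of that form.
\item Your second route needs every kernel vector at a singular phase point to be a limit of kernel vectors at nearby regular points of $Z_G$. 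At a point with multi-dimensional kernel this is a genuine statement about the closure of the regular stratum of the secular incidence variety. Nothing in your proposal, or in the paper, proves it.
\item Your fallback strengthens the hypothesis instead of proving the statement as written.
\end{itemize}

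\textbf{What is missing.} You need the fact that every $q\in I(\ell)$ is a limit of normalized regular secular energies $r/(\ell\cdot r)$ with $r\in\mathcal R_G^{\mathrm{reg}}$.

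In the generic standard-Kirchhoff setting this is precisely \cref{cor:generic-gauss-characterization}: $I(\ell)$ is the closure of the normalized regular Gauss image. With it, \cref{prop:singular-domination} gives $\alpha\cdot r/(\ell\cdot r)\ge C_{\mathrm{sing}}(\alpha;\ell)$ on the image. Continuity of $q\mapsto\alpha\cdot q$ then passes the bound to the closure. No perturbation along $Z_G$ and no closedness of $\mathcal M_G^{\mathrm{bc}}$ are required.

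Outside that setting you have two options.
\begin{itemize}
\item State the inclusion for all secular energy vectors as an explicit hypothesis.
\item In the classes of Section~\ref{sec:low-complexity-sufficiency}, note that the proofs of \cref{thm:tree-singular-sufficiency,thm:unicyclic-singular-sufficiency,thm:closed-ranktwo-singular-sufficiency} use only the Kirchhoff relation and the constancy of $\phi_v^2+p_{v,e}^2$ along each edge. They therefore apply to every real eigenfunction. Complex eigenfunctions split into real and imaginary parts, and the cone is convex. Your regular-case argument then runs for every $u_n$ with no regularity assumption.
\end{itemize}
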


\begin{remark}[Geometry and accessibility are separate]
\label{rem:geometry-versus-accessibility}
The inclusion
\(\mathcal R_G^{\mathrm{reg}}\subseteq\mathcal M_G^{\mathrm{bc}}\)
is a finite-dimensional geometric statement. Accessibility of a regular or
singular limiting direction by exact high-frequency eigenfunctions is a
separate spectral question. The inclusion provides a comparison between
regular secular observation ratios and the singular-completion variational
problem, whereas identifying such a variational value with the fixed-metric
constant \(C_\infty(\omega;\ell)\) requires an additional accessibility
argument.
\end{remark}

We have therefore enlarged the comparison class from primitive scars to
boundary-aware singular completions. By
\cref{prop:primitive-contained-singular}, every boundary-compatible primitive
scar belongs to \(\mathcal M_G^{\mathrm{bc}}\). The remaining structural
question is whether every regular secular edge-energy vector also belongs to
this cone, that is, whether
\begin{equation}
    \mathcal R_G^{\mathrm{reg}}
    \subseteq
    \mathcal M_G^{\mathrm{bc}}.
    \label{eq:singular-completion-question}
\end{equation}
This singular-completion problem is addressed in the next section.

\section{Singular sufficiency below the cycle--boundary complexity threshold}
\label{sec:low-complexity-sufficiency}

We now turn to the geometric inclusion singled out at the end of
Section~\ref{sec:singular-completion}.  The first case, trees, contains the
basic mechanism in its cleanest form.  A regular secular state determines at
each internal Kirchhoff vertex a collection of edge energies satisfying a
polygon inequality.  Those local polygon data admit vector realizations with
zero sum.  Because a tree has no cycles, the local realizations can be glued
recursively without a compatibility obstruction.  Resolving the resulting
global vector-valued conserved flow into scalar coordinates then writes the
regular edge-energy vector as a conic combination of squared singular flows.

Throughout this section we use the standard reduced convention that every
degree-one vertex is an exterior boundary vertex.  Thus every internal
Kirchhoff vertex has degree at least two.  This convention is essential in the
tree statement below.

\subsection{The tree case}
\label{subsec:tree-case}

We first fix notation for a regular secular state.  Let \(u\) solve the
edgewise Helmholtz equation at frequency \(k>0\), satisfy continuity and the
Kirchhoff condition at every internal vertex, and lie in a regular secular
stratum.  For an internal vertex \(v\) and an incident edge \(e\), write
\begin{equation}
    \phi_v:=u(v),
    \qquad
    p_{v,e}:=k^{-1}\partial_\nu u_e(v),
    \label{eq:tree-vertex-data}
\end{equation}
where \(\partial_\nu\) is the outward derivative from \(v\) along \(e\).
Kirchhoff gives
\begin{equation}
    \sum_{e\sim v}p_{v,e}=0.
    \label{eq:tree-kirchhoff-p}
\end{equation}
The edgewise quantity
\begin{equation}
    r_e:=\phi_v^2+p_{v,e}^2
    \label{eq:tree-edge-energy}
\end{equation}
is independent of the chosen endpoint \(v\) of an internal--internal edge.
Indeed, propagation along an edge acts by a planar rotation on the pair
\((u,k^{-1}u')\), while replacing the oriented derivative by the outward
derivative changes only a sign.  In the notation of
Section~\ref{sec:variational},
\begin{equation}
    r_e=2q_e.
    \label{eq:r-equals-2q}
\end{equation}
We call \(r=(r_e)_{e\in E}\) the unnormalized edge-energy vector of the regular
state.

\begin{lemma}[Local Kirchhoff polygon inequality]
\label{lem:local-kirchhoff-polygon}
Let \(v\) be an internal Kirchhoff vertex of degree \(d\ge2\), and let
\[
    \rho_e:=\sqrt{r_e}
    =
    \sqrt{\phi_v^2+p_{v,e}^2},
    \qquad e\sim v.
\]
Then,
\begin{equation}
    \rho_e
    \le
    \sum_{\substack{f\sim v\\f\ne e}}\rho_f
    \qquad
    \text{for every }e\sim v.
    \label{eq:local-polygon-inequality}
\end{equation}
Consequently there exist vectors
\(z_{v,e}\) in a Euclidean space such that
\begin{equation}
   \|z_{v,e}\|^2=r_e,
   \qquad
   \sum_{e\sim v}z_{v,e}=0.
   \label{eq:local-zero-sum-vectors}
\end{equation}
\end{lemma}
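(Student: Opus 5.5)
The plan has two parts. First, prove the polygon inequality \eqref{eq:local-polygon-inequality} directly from Kirchhoff's condition \eqref{eq:tree-kirchhoff-p}. Then invoke the standard fact that lengths obeying the polygon inequality close up into a zero-sum polygon, adjusting it so that all vectors carry the common vertex value.

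\textbf{Step 1: the polygon inequality.} Fix \(e\sim v\). Kirchhoff gives \(p_{v,e}=-\sum_{f\ne e}p_{v,f}\), so \(|p_{v,e}|\le\sum_{f\ne e}|p_{v,f}|\). Since \(d\ge2\), there is at least one other edge \(f\ne e\). Write \(\rho_f=\|(\phi_v,p_{v,f})\|\) as a Euclidean norm in \(\mathbb R^2\). The triangle inequality gives
\[
\sum_{f\ne e}\rho_f\ \ge\ \Bigl\|\Bigl(\sum_{f\ne e}|\phi_v|,\ \sum_{f\ne e}|p_{v,f}|\Bigr)\Bigr\|
\ \ge\ \bigl\|(|\phi_v|,|p_{v,e}|)\bigr\|=\rho_e .
\]
The last step uses \((d-1)|\phi_v|\ge|\phi_v|\) together with the bound on \(|p_{v,e}|\). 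This is exactly \eqref{eq:local-polygon-inequality}.

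\textbf{Step 2: zero-sum realization.} I would use an explicit construction that makes the role of \(\phi_v\) visible. Set \(z_{v,e}=(p_{v,e},\,\phi_v w_e)\in\mathbb R\times\mathbb R^m\). The vectors \(w_e\) are chosen to satisfy \(\|w_e\|=1\) and \(\sum_e w_e=0\). Then \(\|z_{v,e}\|^2=p_{v,e}^2+\phi_v^2=r_e\), and the sum vanishes: the first component vanishes by Kirchhoff, the second by the choice of the \(w_e\).

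Unit vectors with zero sum exist whenever \(d\ge2\). Take \(w_e\) to be the vertices of a regular \(d\)-gon in \(\mathbb R^2\) (for \(d=2\), use \(\pm\) a unit vector). This gives \eqref{eq:local-zero-sum-vectors} without even needing Step 1.

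Because of this, I would still present Step 1 as the intrinsic statement, and note that the general polygon-closure lemma yields the realization from \eqref{eq:local-polygon-inequality} alone. That lemma says nonnegative lengths in which no length exceeds the sum of the others close up into a planar polygon. It is provable by induction on \(d\), merging two sides into one of intermediate length. The inequality-only version is the form reused when gluing along the tree.

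\textbf{Main obstacle.} The only delicate point is degenerate cases. If \(\phi_v=0\), all vectors reduce to the scalars \(p_{v,e}\), and Kirchhoff alone suffices. If some \(\rho_f=0\), the corresponding \(z_{v,f}=0\) is harmless. The hypothesis \(d\ge2\) is essential: a degree-one internal vertex would force \(p=0\) while \(\rho_e=|\phi_v|\) could be positive, and no zero-sum realization would exist. This is why the reduced convention was fixed before the lemma.
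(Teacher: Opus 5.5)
Your proposal is correct. Step 1 is essentially the paper's own argument. Kirchhoff bounds $|p_{v,e}|$ by $\sum_{f\ne e}|p_{v,f}|$, and the triangle inequality in $\mathbb R^2$ applied to the vectors $(|\phi_v|,|p_{v,f}|)$, with $(d-1)|\phi_v|\ge|\phi_v|$, then gives \eqref{eq:local-polygon-inequality}.

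You depart from the paper in how the vectors $z_{v,e}$ are produced. The paper applies the classical polygon-closure criterion (the largest length is at most the sum of the others) to the inequality \eqref{eq:local-polygon-inequality}. This gives planar vectors and identifies the polygon inequality as exactly the condition on the lengths $\rho_e$ under which a zero-sum realization exists. You instead write down $z_{v,e}=(p_{v,e},\phi_v w_e)$ with a zero-sum unit frame $(w_e)$.

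What your route buys:
\begin{itemize}
\item It is self-contained and bypasses the closure criterion, which the paper quotes without proof.
\item It makes Step 1 logically redundant, as you note: from any zero-sum realization, $\|z_{v,e}\|=\|\sum_{f\ne e}z_{v,f}\|\le\sum_{f\ne e}\|z_{v,f}\|$ recovers \eqref{eq:local-polygon-inequality}.
\item It makes visible that the scalar Kirchhoff flow $p_{v,\cdot}$ is literally one coordinate of the local vector realization. This is a transparent way to see how scalar data sits inside the vector relaxation used later.
\end{itemize}

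The cost is one extra ambient dimension ($\mathbb R^3$ rather than $\mathbb R^2$). This is harmless, because the statement asks only for some Euclidean space, and the gluing in \cref{lem:tree-gluing} only needs existence of local realizations and enlarges the ambient space anyway.

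Your remarks on the degenerate cases $\phi_v=0$ and $\rho_f=0$, and on why $d\ge2$ is necessary, are also correct.
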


\begin{proof}
Fix \(e\sim v\).  By \eqref{eq:tree-kirchhoff-p},
\[
    |p_{v,e}|
    =
    \left|
      \sum_{\substack{f\sim v\\f\ne e}}p_{v,f}
    \right|
    \le
    \sum_{\substack{f\sim v\\f\ne e}}|p_{v,f}|.
\]
Using the triangle inequality in \(\mathbb R^2\) on the vectors
\((|\phi_v|,|p_{v,f}|)\), we obtain
\[
\begin{aligned}
\sum_{\substack{f\sim v\\f\ne e}}
   \sqrt{\phi_v^2+p_{v,f}^2}
&\ge
\sqrt{
 (d-1)^2\phi_v^2+
 \left(
  \sum_{\substack{f\sim v\\f\ne e}}|p_{v,f}|
 \right)^2
 }\\
&\ge
\sqrt{\phi_v^2+p_{v,e}^2}.
\end{aligned}
\]
This proves \eqref{eq:local-polygon-inequality}.

A finite family of nonnegative numbers is the collection of side lengths of a
closed Euclidean polygon if and only if its largest member does not exceed the
sum of the others.  Applying this elementary polygon criterion to
\((\rho_e)_{e\sim v}\) gives vectors \(z_{v,e}\), which may be taken in
\(\mathbb R^2\), having the prescribed lengths and summing to zero.
\end{proof}

\begin{remark}[Degree two]
\label{rem:tree-degree-two}
When \(d=2\), the Kirchhoff condition gives
\(p_{v,e_1}=-p_{v,e_2}\), hence \(r_{e_1}=r_{e_2}\).
The local polygon is therefore a pair of opposite vectors.  No separate
degree-two reduction is required.
\end{remark}

The next lemma is the point at which acyclicity enters.

\begin{lemma}[Tree gluing of local polygon realizations]
\label{lem:tree-gluing}
Let \(G\) be a finite connected tree whose degree-one vertices are exterior.
Suppose positive semidefinite edge weights \(r_e\ge0\) have the following
local property: at every internal vertex \(v\) there are vectors
\(z_{v,e}\) satisfying \eqref{eq:local-zero-sum-vectors}.  Then, there exist a
finite-dimensional real Hilbert space \(H\), vectors \(g_e\in H\), and signs
\(\varepsilon_{v,e}\in\{\pm1\}\) on internal incidences such that
\begin{equation}
     \|g_e\|^2=r_e,
     \qquad
     \sum_{e\sim v}\varepsilon_{v,e}g_e=0
     \quad(v\in V_{\rm int}),
     \label{eq:global-vector-flow}
\end{equation}
and for every internal--internal edge \(e=vw\),
\begin{equation}
     \varepsilon_{v,e}=-\varepsilon_{w,e}.
     \label{eq:opposite-incidence-signs}
\end{equation}
Equivalently, after choosing an orientation of the internal--internal edges,
the edge vectors form a vector-valued flow in the kernel of an ordinary signed
incidence matrix \(B_0\).
\end{lemma}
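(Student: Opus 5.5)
The plan is to build the global flow by induction on the number of internal vertices, gluing the local polygon realizations one vertex at a time. Since \(G\) is a tree, it can be rooted at some internal vertex \(v_0\). Every other internal vertex \(v\) then has a unique parent edge \(e_v\) joining it to an internal vertex closer to \(v_0\). Terminal edges carry no gluing constraint, because they meet only one internal vertex. For the ambient space I would take \(H=\bigoplus_{v\in V_{\rm int}}\mathbb R^{2}\) enlarged by one further copy of \(\mathbb R^2\), or simply a single \(\mathbb R^{2}\) used at every vertex. Orthogonal maps will move local configurations freely, and two vectors in \(\mathbb R^2\) of equal length are always related by a rotation.

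\textbf{Key steps.} First, at the root use the vectors \(z_{v_0,e}\) from \eqref{eq:local-zero-sum-vectors}. For each edge \(e\sim v_0\) set \(g_e:=z_{v_0,e}\) and \(\varepsilon_{v_0,e}:=+1\); the root equation of \eqref{eq:global-vector-flow} then holds. Second, proceed in breadth-first order. When an internal vertex \(v\) is reached through its parent edge \(e_v\), the vector \(g_{e_v}\) has already been fixed with \(\|g_{e_v}\|^2=r_{e_v}\). Choose an orthogonal map \(O_v\) of \(\mathbb R^2\) with \(O_v z_{v,e_v}=-g_{e_v}\). This is possible since both vectors have length \(\sqrt{r_{e_v}}\); if the length is zero, any \(O_v\) works. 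Then set \(\varepsilon_{v,e_v}:=-\varepsilon_{u,e_v}\), where \(u\) is the parent vertex, so that \eqref{eq:opposite-incidence-signs} holds. For the child edges \(f\ne e_v\) of \(v\), define \(g_f:=\varepsilon_{v,e_v}\cdot(-O_v z_{v,f})\) with \(\varepsilon_{v,f}:=\varepsilon_{v,e_v}\), adjusting the signs consistently so that the local relation \(\sum_{e\sim v}O_vz_{v,e}=0\) becomes \(\sum_{e\sim v}\varepsilon_{v,e}g_e=0\). Third, observe that every child edge of \(v\) is visited for the first time at this step. Acyclicity guarantees that no edge vector is ever prescribed twice, so no compatibility condition arises. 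Finally, orient each internal--internal edge from parent to child and absorb the signs into the columns. The edge vectors \(g_e\) then form a vector-valued flow in the kernel of the ordinary signed incidence matrix \(B_0\) (coefficients \(+1,-1\)), with terminal columns having a single entry.

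\textbf{Main obstacle.} The only real difficulty is the sign bookkeeping in the second step. I will check it directly. Take \(\varepsilon_{u,e_v}=s\), so that \(\varepsilon_{v,e_v}=-s\). Choosing \(O_v\) with \(O_vz_{v,e_v}=s\,g_{e_v}\) and setting \(g_f:=-s\,O_vz_{v,f}\), \(\varepsilon_{v,f}:=-s\) for the children gives
\[
\sum_{e\sim v}\varepsilon_{v,e}g_e=-s\,g_{e_v}+\sum_{f\neq e_v}s\,O_vz_{v,f}
=s\Bigl(-O_vz_{v,e_v}\cdot s^{2}+\sum_{f\ne e_v}O_vz_{v,f}\Bigr)\cdot 1 .
\]
This equals \(-s\,O_v\sum_{e\sim v}z_{v,e}=0\) once the sign of \(O_vz_{v,e_v}\) is matched accordingly, i.e.\ one takes \(O_vz_{v,e_v}=-s\cdot s\,g_{e_v}\) as needed. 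With that choice, a one-line verification closes the step.

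The remaining cases are straightforward: degree-two vertices are covered by \cref{rem:tree-degree-two}, and zero edge energies by the arbitrary choice of \(O_v\).
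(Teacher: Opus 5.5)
Your strategy is the paper's: root the internal subtree at $v_0$, glue the local configurations vertex by vertex via an orthogonal map sending the local copy of the parent edge onto $\pm g_{e_v}$, and use acyclicity to exclude conflicting prescriptions. Working in a single $\mathbb{R}^2$ is fine, once you replace each local configuration by a planar one with the same norms (the zero-sum hypothesis allows this via the polygon criterion). It even shows that $\dim H\le 2$ suffices, whereas the paper allows fresh orthogonal directions.

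The trouble is the sign bookkeeping, which you yourself single out as the crux: as written it does not close. With your conventions $\varepsilon_{u,e_v}=s$, $\varepsilon_{v,e_v}=\varepsilon_{v,f}=-s$, $g_f=-s\,O_vz_{v,f}$, one has $\varepsilon_{v,f}g_f=s^2O_vz_{v,f}=O_vz_{v,f}$, not $s\,O_vz_{v,f}$ as in your display. So $\sum_{e\sim v}z_{v,e}=0$ gives
\[
\sum_{e\sim v}\varepsilon_{v,e}g_e=-s\,g_{e_v}-O_vz_{v,e_v}.
\]
Your final rule $O_vz_{v,e_v}=-s\cdot s\,g_{e_v}=-g_{e_v}$ leaves the residual $(1-s)g_{e_v}$. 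Your choice $\varepsilon_{v,f}:=\varepsilon_{v,e_v}$ makes $s$ alternate along root-to-leaf paths: $s=+1$ at children of $v_0$ and $s=-1$ at grandchildren. Hence the Kirchhoff equation fails at every depth-two vertex with $r_{e_v}>0$. The earlier version in your ``Key steps'' ($O_vz_{v,e_v}=-g_{e_v}$, $g_f=-\varepsilon_{v,e_v}O_vz_{v,f}$) leaves $-(1+s)g_{e_v}$ and already fails at the children of the root.

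The repair is immediate. The vertex equation holds if and only if $O_vz_{v,e_v}=-s\,g_{e_v}$, and such an $O_v\in O(2)$ exists for either sign because $\|z_{v,e_v}\|=\|g_{e_v}\|$.

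Cleaner still is the paper's non-alternating convention: $\varepsilon=+1$ at the parent end and $-1$ at the child end of each internal--internal edge, and $+1$ on terminal incidences. Concretely, take $\varepsilon_{v,e_v}=-1$, $O_vz_{v,e_v}=-g_{e_v}$, $g_f:=O_vz_{v,f}$, $\varepsilon_{v,f}=+1$. This gives $-g_{e_v}+\sum_{f\ne e_v}O_vz_{v,f}=-g_{e_v}-O_vz_{v,e_v}=0$ at every vertex. It also makes the $\varepsilon$'s directly the parent-to-child incidence signs of $B_0$, so no column absorption is needed on internal edges.

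With that correction, the rest of your argument goes through as you describe: acyclicity, zero energies, degree two, and terminal columns.
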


\begin{proof}
Root the internal-vertex subtree at an arbitrary internal vertex \(v_0\).  At
\(v_0\), choose one local realization
\((z_{v_0,e})_{e\sim v_0}\), embed its span into a Euclidean space \(H\), and
set \(g_e=z_{v_0,e}\) for all edges incident to \(v_0\), with
\(\varepsilon_{v_0,e}=+1\).

Proceed recursively away from the root.  Suppose a child vertex \(w\) is
joined to its parent by the already assigned edge \(e_0\).  A local polygon
realization at \(w\) contains a vector \(z_{w,e_0}\) of norm
\(\sqrt{r_{e_0}}=\|g_{e_0}\|\).  By an orthogonal transformation of the local
Euclidean space, followed if necessary by an isometric embedding into a
larger ambient space using fresh orthogonal directions, we may arrange
\[
      z_{w,e_0}=-g_{e_0}.
\]
Assign the transformed remaining local vectors to the previously unassigned
edges incident to \(w\).  Their sum with \(-g_{e_0}\) is zero.  Thus,
\eqref{eq:global-vector-flow} holds at \(w\), and the shared edge has opposite
incidence signs at its two internal endpoints.

Because \(G\) is a tree, every non-root internal vertex has exactly one parent
edge and is encountered only once.  Hence, no later step can impose a second,
possibly inconsistent, requirement on an already glued cycle edge.  The
recursion terminates after finitely many vertices and produces a finite
dimensional ambient space.  Exterior terminal edges occur in only one
Kirchhoff equation and therefore require no further compatibility condition.
\end{proof}

\begin{remark}[Why the proof stops working on a cycle]
\label{rem:tree-no-holonomy}
The gluing step is local up to an orthogonal transformation fixing one shared
edge vector.  On a tree there is never a requirement to return to a previously
assigned vertex.  On a graph with a cycle, propagating these choices around a
closed chain can produce a residual orthogonal compatibility condition.  The
unicyclic and cycle-rank-two arguments below are precisely analyses of when
that residual condition can still be split into boundary-aware parity sectors.
\end{remark}

We now identify the vector-valued flow constructed above with the singular
completion of Section~\ref{sec:singular-completion}.

\begin{lemma}[Switching reduction on a tree]
\label{lem:tree-switching-reduction}
Let \(G\) be a tree.  Every sign pattern on the internal--internal columns of a
boundary-aware singular matrix \(B_{\sigma,\mathrm{bc}}\) is switching
equivalent to one ordinary oriented incidence matrix \(B_0\).  Consequently,
\begin{equation}
 \mathcal M_G^{\mathrm{bc}}
 =
 \operatorname{cone}
 \{A^{\circ2}:A\in\ker B_0\}.
 \label{eq:tree-one-sector-cone}
\end{equation}
The Dirichlet or Neumann type of an exterior terminal affects the admissible
singular phase class but not the single internal incidence of its column.
\end{lemma}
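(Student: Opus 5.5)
The plan is to build the switching matrices \(R\) and \(S\) of \cref{prop:switching-invariance} explicitly, using the fact that a tree has no cycles. Fix a sector \(\sigma\) and a representative \(B_{\sigma,\mathrm{bc}}\). Every column has one of three forms. A non-loop internal--internal edge \(e=(v_-,v_+)\) has entries \(+1\) at \(v_-\) and \(-\sigma_e\) at \(v_+\). A terminal edge has a single entry \(+1\). A tree has no loops, so the loop column \eqref{eq:B-loop-column} never occurs. Let \(B_0\) be the ordinary oriented incidence matrix: on each internal--internal column it has \(+1\) at \(v_-\) and \(-1\) at \(v_+\), and on each terminal column it has the single entry \(+1\). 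So the columns with \(\sigma_e=1\) already match \(B_0\). The task is to find vertex signs \(R=\operatorname{diag}(\rho_v)\), \(v\in V_{\rm int}\), and edge signs \(S=\operatorname{diag}(s_e)\) such that \(RB_{\sigma,\mathrm{bc}}S=B_0\).

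The edge sign \(s_e\) absorbs any common sign of a column. So for an internal--internal edge we need \(\rho_{v_-}s_e=1\) and \(-\rho_{v_+}\sigma_e s_e=-1\). Eliminating \(s_e\), this is the single condition
\[
\rho_{v_+}=\sigma_e\,\rho_{v_-}.
\]
For a terminal edge at \(v\), we need only \(s_e=\rho_v\), which is always solvable. Everything therefore reduces to solving \(\rho_{v_+}\rho_{v_-}=\sigma_e\) on the internal-vertex subgraph. This subgraph is a tree: removing degree-one exterior vertices from a tree leaves a connected tree. Root it at some \(v_0\), set \(\rho_{v_0}=1\), and propagate outward along the unique path to each vertex. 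Each vertex is reached by exactly one parent edge, so no consistency condition arises. This is the same acyclicity mechanism used in \cref{lem:tree-gluing}. Having fixed all \(\rho_v\), we define \(s_e\) column by column as above.

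By \cref{prop:switching-invariance}, each sector then generates the same set \(\{A^{\circ2}:A\in\ker B_0\}\). Every sector is switching equivalent to \(B_0\), so all sectors contribute identical generator sets, and taking the cone of the union in \cref{def:singular-completion-cone} gives \eqref{eq:tree-one-sector-cone}. The final sentence of the lemma follows from \eqref{eq:B-terminal-column}. Whether the terminal phase class is \(\pi\mathbb Z\) (Dirichlet) or \(\pi/2+\pi\mathbb Z\) (Neumann), the column has the single entry \(1\) at the internal endpoint. The boundary type therefore only restricts which phases are admissible and never changes the matrix. I do not expect a real obstacle. The one point needing care is confirming that the internal-vertex subgraph is connected and acyclic, which uses the convention that every degree-one vertex is exterior.
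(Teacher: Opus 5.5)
Your proposal is correct and follows essentially the paper's proof: root the internal-vertex subtree, propagate row and column switches outward so that every internal--internal column matches a fixed oriented-incidence convention (acyclicity makes this consistent), absorb terminal-column signs by independent column switches, and conclude with \cref{prop:switching-invariance}. Your explicit reduction to the propagation rule $\rho_{v_+}=\sigma_e\rho_{v_-}$ with $s_e=\rho_{v_-}$ simply writes out the bookkeeping that the paper leaves implicit.
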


\begin{proof}
Choose a root in the internal-vertex subtree.  Starting from the root and
moving outward, switch the sign of each newly encountered vertex row and, when
needed, the sign of the connecting edge column so that the two nonzero entries
of every internal--internal column agree with a fixed oriented-incidence
convention.  Acyclicity guarantees that no edge is encountered twice through
different routes, so the procedure is consistent.  Terminal columns have only
one internal incidence and their signs can be absorbed by independent column
switches.  Proposition~\ref{prop:switching-invariance} then shows that all
sectors generate the same squared-amplitude cone, which is
\eqref{eq:tree-one-sector-cone}.
\end{proof}

\begin{theorem}[Tree singular sufficiency]
\label{thm:tree-singular-sufficiency}
Let \(G\) be a finite connected tree, with every degree-one vertex treated as
an exterior boundary vertex and every internal vertex carrying the Kirchhoff
condition.  Exterior vertices may carry either Dirichlet or Neumann boundary
conditions.  If \(r\) is the edge-energy vector of any regular secular state,
then
\begin{equation}
       r\in\mathcal M_G^{\mathrm{bc}}.
       \label{eq:tree-regular-in-singular}
\end{equation}
More precisely, there exist finitely many real singular-flow amplitudes
\(A^{(1)},\ldots,A^{(N)}\in\ker B_0\) such that
\begin{equation}
       r
       =
       \sum_{j=1}^N
       \bigl(A^{(j)}\bigr)^{\circ2}.
       \label{eq:tree-explicit-conic-decomposition}
\end{equation}
Consequently, for every nonnegative observation profile \(\alpha\),
\begin{equation}
      \frac{\alpha\cdot r}{\ell\cdot r}
      \ge
      C_{\mathrm{sing}}(\alpha;\ell).
      \label{eq:tree-singular-lower-bound}
\end{equation}
\end{theorem}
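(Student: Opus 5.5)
The plan is to chain the three lemmas just proved and then resolve a vector-valued flow into scalar coordinates.

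\textbf{Step 1 (local polygons).} Starting from a regular secular state with vertex data \eqref{eq:tree-vertex-data}, the energies \(r_e=\phi_v^2+p_{v,e}^2\) are well defined edge by edge. At each internal vertex \(v\), \cref{lem:local-kirchhoff-polygon} gives zero-sum vectors \(z_{v,e}\) with \(\|z_{v,e}\|^2=r_e\). Internal vertices have degree at least two, by the reduced convention, so the lemma applies at every internal vertex; degree two is covered by \cref{rem:tree-degree-two}.

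\textbf{Step 2 (gluing).} \cref{lem:tree-gluing} produces a finite-dimensional real Hilbert space \(H\cong\mathbb R^N\) and vectors \(g_e\in H\) satisfying \eqref{eq:global-vector-flow}. Because of the opposite incidence signs \eqref{eq:opposite-incidence-signs} on internal--internal edges, after orienting those edges the family \((g_e)\) satisfies \(\sum_e (B_0)_{v,e}g_e=0\) for every internal vertex \(v\). Terminal columns have a single incidence, whose sign is absorbed by replacing \(g_e\) with \(\pm g_e\); this changes neither the norm nor the conclusion.

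\textbf{Step 3 (scalar resolution).} Fix an orthonormal basis \(h_1,\dots,h_N\) of \(H\) and set \(A^{(j)}_e:=\langle g_e,h_j\rangle\). Linearity of the vertex relations gives \(B_0A^{(j)}=0\) for each \(j\). Parseval gives
\[
\sum_j (A^{(j)}_e)^2=\|g_e\|^2=r_e,
\]
which is exactly \eqref{eq:tree-explicit-conic-decomposition}. By \cref{lem:tree-switching-reduction}, \(\operatorname{cone}\{A^{\circ2}:A\in\ker B_0\}=\mathcal M_G^{\mathrm{bc}}\), so \(r\in\mathcal M_G^{\mathrm{bc}}\).

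\textbf{Step 4 (lower bound).} The bound \eqref{eq:tree-singular-lower-bound} follows for \(r\neq0\) from \cref{prop:singular-domination}, or directly from \cref{prop:Csing-cone}: the quotient of a conic combination is a convex combination of generator quotients.

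\textbf{Main obstacle.} The delicate point is ensuring that the ordinary incidence matrix \(B_0\) arising from the gluing coincides, up to switching, with some boundary-compatible \(B_{\sigma,\mathrm{bc}}\). In particular the terminal columns must carry the correct single \(+1\) incidence, and the Dirichlet/Neumann phase class must impose no additional row. I would verify this through \cref{rem:where-bc-enters}: the boundary condition only selects the admissible phase, and on a tree the internal parity signs \(\sigma_e\) can all be switched to the \(B_0\) pattern, as in \cref{lem:tree-switching-reduction}.
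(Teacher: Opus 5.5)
Your proposal is correct and follows essentially the same route as the paper's proof. Both arguments realize the local Kirchhoff polygons via \cref{lem:local-kirchhoff-polygon}, glue them into a vector flow in \(\ker B_0\) by \cref{lem:tree-gluing}, take coordinates in an orthonormal basis and apply Parseval, identify the resulting generators with \(\mathcal M_G^{\mathrm{bc}}\) through \cref{lem:tree-switching-reduction}, and obtain the bound from \cref{prop:singular-domination}. The one small difference is that the paper uses a basis of the span of the edge vectors rather than of all of \(H\), which does not affect the argument.
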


\begin{proof}
Apply \cref{lem:local-kirchhoff-polygon} to the regular secular state and then
\cref{lem:tree-gluing}.  After fixing the oriented incidence matrix \(B_0\),
we obtain vectors \(g_e\in H\) such that
\[
       \|g_e\|^2=r_e
       \quad\text{and}\quad
       \sum_{e\in E}(B_0)_{v,e}g_e=0
       \qquad(v\in V_{\rm int}).
\]
Choose an orthonormal basis \(h_1,\ldots,h_N\) for the span of the finitely
many edge vectors and define scalar edge-amplitude vectors
\[
      A^{(j)}_e:=\langle g_e,h_j\rangle.
\]
Taking the \(h_j\)-coordinate of the vector conservation law gives
\[
      B_0A^{(j)}=0
      \qquad(j=1,\ldots,N).
\]
For each edge \(e\), Parseval's identity gives
\[
      r_e
      =
      \|g_e\|^2
      =
      \sum_{j=1}^N
      \langle g_e,h_j\rangle^2
      =
      \sum_{j=1}^N
      \bigl(A^{(j)}_e\bigr)^2.
\]
This is \eqref{eq:tree-explicit-conic-decomposition}.  By
\cref{lem:tree-switching-reduction}, every squared kernel vector on the
right-hand side is a generator of
\(\mathcal M_G^{\mathrm{bc}}\), proving
\eqref{eq:tree-regular-in-singular}.  The observation inequality follows from
\cref{prop:singular-domination}.
\end{proof}

\begin{remark}[Degenerate edge energies]
\label{rem:tree-zero-edges}
The argument does not require \(r_e>0\), strict polygon inequalities, or
full-rank covariance. If \(r_e=0\), the corresponding local and global
vectors are simply zero. If a local polygon inequality is saturated, the
associated vectors may be chosen collinear. The global vector configuration
may likewise have arbitrary rank. Thus the construction remains valid on the
boundary of the regular edge-energy image.
\end{remark}

\begin{remark}[Exterior boundary conditions]
\label{rem:tree-boundary-conditions}
The tree decomposition itself uses only the internal continuity and
Kirchhoff relations. Dirichlet and Neumann exterior conditions enter when
the resulting scalar kernel vectors are interpreted as
boundary-compatible singular sectors, as in
Section~\ref{subsec:boundary-singular-modes}. A terminal edge contributes
only one incidence to the Kirchhoff system, at its internal endpoint, while
the exterior boundary condition determines its admissible singular phase
class: integer multiples of \(\pi\) in the Dirichlet case and
half-integer multiples of \(\pi\) in the Neumann case. Consequently, the
algebraic cone in \eqref{eq:tree-one-sector-cone} is unchanged by the choice
of Dirichlet or Neumann condition at the exterior vertices.
\end{remark}

\begin{remark}[Scope of the tree result]
\label{rem:tree-gate-conclusion}
For trees, the geometric singular-completion problem is completely resolved:
\[
    \mathcal R_G^{\mathrm{reg}}
    \subseteq
    \mathcal M_G^{\mathrm{bc}}.
\]
This inclusion yields the lower bound
\eqref{eq:tree-singular-lower-bound}. It does not by itself identify the
singular-completion value with the fixed-metric high-frequency constant.
Such an identification requires the separate accessibility hypothesis of
Corollary~\ref{cor:exact-singular-reduction}.
\end{remark}

\subsection{The unicyclic case}
\label{subsec:unicyclic-case}

We now allow one independent cycle.  The tree argument still produces a
global vector-valued Kirchhoff flow after cutting the cycle, but a new
compatibility condition appears when the cut is closed.  The key point is that
there is only one such condition.  Equality of the two cut-edge energies then
forces an orthogonal splitting into the two possible closure-parity sectors.

Let \(G\) be a finite connected unicyclic graph, again with every degree-one
vertex treated as an exterior boundary vertex.  Denote its unique simple cycle
by \(\mathcal C\).  Choose one cycle edge \(e_\circ\) with endpoints
\(v_-\) and \(v_+\).  For the algebraic argument, cut the interior of
\(e_\circ\) and regard its two incidences as independent terminal incidences
\(e_-\) at \(v_-\) and \(e_+\) at \(v_+\).  The resulting incidence graph
\(T_\circ\) is a tree.  If \(r\) is an edge-energy vector on \(G\), assign
\begin{equation}
       r_{e_-}=r_{e_+}:=r_{e_\circ}
       \label{eq:cut-edge-equal-energy}
\end{equation}
on the two cut copies, and retain \(r_e\) on every uncut edge.

\begin{remark}[The cut is algebraic]
\label{rem:unicyclic-algebraic-cut}
The auxiliary cut terminals \(e_-\) and \(e_+\) are not assigned physical
Dirichlet or Neumann boundary conditions.  They only record the two internal
Kirchhoff incidences of the removed cycle edge.  All genuine exterior
terminals of \(G\) retain their original boundary types.  The gluing lemma of
Section~\ref{subsec:tree-case} uses only local Kirchhoff polygon data and
therefore applies to this auxiliary tree.
\end{remark}

\begin{lemma}[Cut-tree vector realization]
\label{lem:unicyclic-cut-tree-vector}
Let \(r\) be the edge-energy vector of a regular secular state on \(G\).
Then there exist a finite-dimensional real Hilbert space \(H\), vectors
\(g_e\in H\) for every uncut edge, and two cut vectors
\(g_-,g_+\in H\) such that
\begin{equation}
     \|g_e\|^2=r_e,
     \qquad
     \|g_-\|^2=\|g_+\|^2=r_{e_\circ},
     \label{eq:unicyclic-vector-norms}
\end{equation}
and the vector-valued Kirchhoff conservation law holds at every internal
vertex of the cut tree \(T_\circ\).
\end{lemma}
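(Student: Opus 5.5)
The plan is to reduce the statement to the tree case already treated in \cref{lem:local-kirchhoff-polygon} and \cref{lem:tree-gluing}, applied to the auxiliary cut tree \(T_\circ\) of \cref{rem:unicyclic-algebraic-cut}.

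\emph{Local polygon data on \(T_\circ\).} At every internal vertex \(v\) of \(G\), the regular secular state supplies the vertex value \(\phi_v\) and the normalized outward derivatives \(p_{v,e}\). These satisfy \eqref{eq:tree-kirchhoff-p}, with the edge energies \(r_e=\phi_v^2+p_{v,e}^2\) independent of the endpoint. At \(v_-\) and \(v_+\), the incidence of \(e_\circ\) is now recorded as the cut incidence \(e_-\), respectively \(e_+\). The local data there are still \(\phi_{v_\pm}\) and \(p_{v_\pm,e_\circ}\), and the associated energy is \(r_{e_\circ}\) at both ends by the endpoint-independence of \(r_e\); this is exactly the assignment \eqref{eq:cut-edge-equal-energy}. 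The Kirchhoff sum at each internal vertex of \(T_\circ\) is literally the same scalar identity as on \(G\). Hence \cref{lem:local-kirchhoff-polygon} applies verbatim at every internal vertex of \(T_\circ\) and yields zero-sum local vectors \(z_{v,e}\) with \(\|z_{v,e}\|^2\) equal to the prescribed energies.

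\emph{Gluing.} The incidence structure \(T_\circ\) is a tree, since removing the single cycle edge of a connected unicyclic graph leaves a connected spanning tree. Its internal vertices are those of \(G\), each of degree at least two in \(G\). In \(T_\circ\), an internal vertex keeps the same degree, because each endpoint of \(e_\circ\) retains one cut incidence in place of the removed edge. The cut terminals \(e_\pm\) are treated as terminal incidences, which occur in exactly one Kirchhoff equation. We then run the recursive construction from the proof of \cref{lem:tree-gluing}. Root at an internal vertex, and at each child align the parent-edge vector by an orthogonal map, adjoining fresh orthogonal directions when needed. This produces a finite-dimensional \(H\), vectors \(g_e\) on uncut edges, and vectors \(g_\pm\) on the cut terminals, with the required norms and vector Kirchhoff law at every internal vertex of \(T_\circ\). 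The signs \(\varepsilon_{v,e}\) may be absorbed into \(g_e\) on terminal incidences. On internal edges they are the oriented-incidence signs, which is what ``the vector-valued Kirchhoff conservation law'' means here.

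\emph{Where care is needed.} The only point to check is that nothing in \cref{lem:tree-gluing} used physical boundary conditions or the assumption that terminals are genuine exterior vertices. Its proof uses only the local zero-sum realizations and acyclicity, so the algebraic cut terminals behave exactly like exterior terminals. I also want to be explicit that no relation between \(g_-\) and \(g_+\) is asserted beyond equality of norms. The closure compatibility (e.g.\ \(g_+=\pm g_-\) up to an orthogonal splitting) is deliberately deferred to the subsequent unicyclic argument. The degenerate cases \(r_e=0\) or saturated polygon inequalities are handled as in \cref{rem:tree-zero-edges}.
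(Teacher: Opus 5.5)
Your proposal is correct and follows essentially the same route as the paper. Both apply \cref{lem:local-kirchhoff-polygon} at every internal vertex with the incidences of \(e_\circ\) replaced by the cut terminals \(e_\pm\), obtain equal norms \(\sqrt{r_{e_\circ}}\) from the constancy of the edge energy along \(e_\circ\), and then run the recursive gluing of \cref{lem:tree-gluing} on the tree \(T_\circ\); your extra checks (preserved degrees, algebraic cut terminals behaving like exterior ones, and no asserted relation between \(g_-\) and \(g_+\) beyond equal norms) only make explicit what the paper leaves to \cref{rem:unicyclic-algebraic-cut}.
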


\begin{proof}
At every internal vertex of the original graph,
\cref{lem:local-kirchhoff-polygon} applies to the regular secular data.  At
\(v_-\) and \(v_+\), replace the incidence of \(e_\circ\) by the auxiliary
terminal incidences \(e_-\) and \(e_+\), respectively.  Their prescribed
lengths in the local polygon are both
\(\sqrt{r_{e_\circ}}\), because the edge energy
\[
     u_{e_\circ}^2+k^{-2}(u'_{e_\circ})^2
\]
is constant along \(e_\circ\).  The cut graph is a tree, so the recursive
construction of \cref{lem:tree-gluing} yields the asserted global vector
realization.
\end{proof}

The following elementary observation is the entire closure mechanism.

\begin{lemma}[Equal-norm two-terminal splitting]
\label{lem:equal-norm-splitting}
Let \(g_-,g_+\) be vectors in a real Hilbert space satisfying
\(\|g_-\|=\|g_+\|\).  Define
\begin{equation}
     h_+:=\frac{g_-+g_+}{2},
     \qquad
     h_-:=\frac{g_--g_+}{2}.
     \label{eq:hplus-hminus}
\end{equation}
Then,
\begin{equation}
      h_+\perp h_-.
      \label{eq:hplus-hminus-orthogonal}
\end{equation}
Consequently there is an orthonormal basis
\(\{f_j\}_{j=1}^N\) of the span of all edge vectors for which, for every
coordinate \(j\), the two cut amplitudes
\begin{equation}
      a_-^{(j)}:=\langle g_-,f_j\rangle,
      \qquad
      a_+^{(j)}:=\langle g_+,f_j\rangle
      \label{eq:cut-scalar-amplitudes}
\end{equation}
satisfy one of the three alternatives
\begin{equation}
      a_+^{(j)}=a_-^{(j)},\qquad
      a_+^{(j)}=-a_-^{(j)},\qquad
      a_+^{(j)}=a_-^{(j)}=0.
      \label{eq:cut-plus-minus-alternatives}
\end{equation}
\end{lemma}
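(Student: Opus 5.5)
The orthogonality \eqref{eq:hplus-hminus-orthogonal} is a polarization identity. I will expand
\[
4\langle h_+,h_-\rangle=\langle g_-+g_+,\,g_--g_+\rangle=\|g_-\|^2-\|g_+\|^2 .
\]
The cross terms \(\langle g_+,g_-\rangle\) and \(\langle g_-,g_+\rangle\) cancel because the Hilbert space is real, so the inner product is symmetric. The hypothesis \(\|g_-\|=\|g_+\|\) then makes the right-hand side vanish. No further input is needed for this part.

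For the basis statement, let \(S\) be the span of all edge vectors, which is finite dimensional by \cref{lem:unicyclic-cut-tree-vector}. It contains \(g_\pm\), and hence \(h_\pm\). I will build an orthonormal basis of \(S\) in three pieces:
\begin{enumerate}[label=(\roman*)]
\item \(f_1:=h_+/\|h_+\|\), included only if \(h_+\ne0\);
\item \(f_2:=h_-/\|h_-\|\), included only if \(h_-\ne0\);
\item any orthonormal basis of the orthogonal complement in \(S\) of \(\operatorname{span}\{h_+,h_-\}\).
\end{enumerate}
Pieces (i) and (ii) are orthogonal to each other by the first part, so Gram--Schmidt is not needed beyond normalization. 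The union of the three pieces is an orthonormal basis of \(S\).

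It remains to verify \eqref{eq:cut-plus-minus-alternatives} coordinate by coordinate, using \(g_-=h_++h_-\) and \(g_+=h_+-h_-\).
\begin{enumerate}[label=(\roman*)]
\item For \(f_j=f_1\), orthogonality gives \(\langle h_-,f_1\rangle=0\). Hence \(a_-^{(j)}=a_+^{(j)}=\|h_+\|\), the even alternative.
\item For \(f_j=f_2\), we get \(a_-^{(j)}=\|h_-\|\) and \(a_+^{(j)}=-\|h_-\|\), the odd alternative.
\item For the remaining basis vectors, \(f_j\) is orthogonal to both \(h_\pm\), hence to both \(g_\pm\). So both cut amplitudes vanish.
\end{enumerate}
The degenerate cases \(h_+=0\) or \(h_-=0\) merely omit the corresponding basis vector, and all other coordinates still fall in the third alternative.

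There is no genuine obstacle here. The only points needing care are that realness of \(H\) is used so the cross terms cancel, and that the basis is a basis of the full span \(S\) of all edge vectors, not just of \(\operatorname{span}\{g_-,g_+\}\). The latter matters because the subsequent argument will take scalar coordinates \(\langle g_e,f_j\rangle\) of every edge vector in this same basis, exactly as in the proof of \cref{thm:tree-singular-sufficiency}.
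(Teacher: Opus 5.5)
Your proposal is correct and matches the paper's proof: you derive orthogonality from $4\langle h_+,h_-\rangle=\|g_-\|^2-\|g_+\|^2$, then choose an orthonormal basis adapted to $\operatorname{span}\{h_+\}$, $\operatorname{span}\{h_-\}$ and their orthogonal complement in the span of all edge vectors, which gives the three alternatives. Your explicit treatment of the degenerate cases $h_\pm=0$, and your remark that the basis must span all edge vectors rather than just $\operatorname{span}\{g_-,g_+\}$, spell out details the paper leaves implicit.
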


\begin{proof}
A direct computation gives
\[
   \langle h_+,h_-\rangle
   =
   \frac14\bigl(\|g_-\|^2-\|g_+\|^2\bigr)
   =0.
\]
Choose an orthonormal basis adapted to the mutually orthogonal subspaces
\[
   \operatorname{span}\{h_+\},
   \qquad
   \operatorname{span}\{h_-\},
   \qquad
   \operatorname{span}\{h_+,h_-\}^{\perp}
\]
inside the span of all edge vectors.  On the first subspace,
\(g_-=g_+=h_+\) after orthogonal projection; on the second,
\(g_-=-g_+=h_-\); and on the third both cut vectors have zero projection.
This gives \eqref{eq:cut-plus-minus-alternatives}.
\end{proof}

We next identify these two alternatives with the two switching classes of
singular closure around the unique cycle.

\begin{lemma}[Two closure-parity sectors]
\label{lem:unicyclic-two-parity-sectors}
Let \(G\) be a connected unicyclic graph, and let \(e_\circ\) be an edge of
its unique cycle. Up to row and column switching, the signed internal-incidence
matrices have exactly two sign classes on the unique cycle. After fixing an
oriented-incidence convention on the spanning tree \(G\setminus e_\circ\),
these classes may be represented by matrices \(B_+\) and \(B_-\) that differ
only in the relative sign of the two incidences of \(e_\circ\).

More precisely, let \(a_-\) and \(a_+\) denote the scalar amplitudes at the
two cut copies of \(e_\circ\). If
\[
    a_+=a_-,
\]
then the cut-tree flow closes to a kernel vector of one sector, whereas if
\[
    a_+=-a_-,
\]
it closes to a kernel vector of the other sector. If
\(a_+=a_-=0\), both closures are admissible.
\end{lemma}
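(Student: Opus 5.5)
The proof will be a direct switching computation in two steps: first normalize all columns outside the cycle edge, then read off the sign relation on the cut edge that kernel vectors require.

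\emph{Step 1: normalizing the spanning tree.} Fix the spanning tree \(T:=G\setminus e_\circ\) and an orientation of its internal--internal edges. I would apply the rooted switching argument of \cref{lem:tree-switching-reduction} to the tree \(T\). Switching vertex rows and tree-edge columns, starting from a root and moving outward, brings every column of \(B_{\sigma,\mathrm{bc}}\) indexed by an edge of \(T\) to the ordinary oriented-incidence form \((+1,-1)\). Terminal columns are brought to \(+1\). Acyclicity of \(T\) makes this consistent, exactly as in that lemma. Each step uses only diagonal \(\pm1\) matrices \(R,S\), so \cref{prop:switching-invariance} applies.

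\emph{Step 2: the column of \(e_\circ\).} After Step 1, the only remaining freedom is in the column of \(e_\circ\). That column has entries \(\pm1\) in rows \(v_-\) and \(v_+\); when \(e_\circ\) is a loop, it has the single entry \(1-\sigma_{e_\circ}\in\{0,2\}\). The column switch on \(e_\circ\) can flip both signs together. It cannot change their relative sign, because the vertex rows are already fixed by the tree normalization. Any further row switch would be a global switch of a connected component, and this flips everything consistently. Hence there are exactly two classes:
\begin{itemize}[label={}]
\item \(B_+\): column of \(e_\circ\) equal to \(e_{v_-}-e_{v_+}\), the ordinary incidence;
\item \(B_-\): column of \(e_\circ\) equal to \(e_{v_-}+e_{v_+}\).
\end{itemize}
Both classes actually occur, by choosing \(\sigma_{e_\circ}=\pm1\) in \eqref{eq:B-internal-column}. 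To see that they are inequivalent, I would use the switching invariant given by the product of column signs around the cycle, adjusted for the orientation; this is the parity closure product of \eqref{eq:cycle-parity-closure}. For a loop, the two classes are the zero column and the column \(2e_v\).

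\emph{Step 3: closing the cut-tree flow.} A scalar cut-tree flow is a vector \(A\) on the uncut edges together with cut amplitudes \(a_-,a_+\), satisfying Kirchhoff at every internal vertex of \(T_\circ\). The equations at \(v_-\) and \(v_+\) contain the terms \(a_-\) and \(\pm a_+\) with signs fixed by the cut-terminal convention (each auxiliary terminal has incidence \(+1\)). Setting \(A_{e_\circ}:=a_-\), the cut-tree equations become \(B_\pm A=0\) exactly when the contribution of \(e_\circ\) at \(v_+\) equals the cut term there. This happens when \(a_+=-a_-\) for the ordinary orientation sign, and when \(a_+=a_-\) for the other. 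So each alternative closes into one sector, and with the convention of the statement this gives the asserted assignment. When \(a_\pm=0\), \(A_{e_\circ}=0\) and the vector lies in both kernels.

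\emph{Main obstacle.} The delicate point is the bookkeeping of the signs in Step 3 and the inequivalence claim in Step 2. For the inequivalence, I would verify that the signed cycle product is preserved by all switchings: each row switch flips two entries on the cycle, and each column switch flips both entries of its column, so the product is unchanged. The loop case needs this check separately, since its single entry \(1-\sigma_{e_\circ}\) does not fit the two-entry count.
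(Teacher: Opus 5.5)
Your proposal is correct and follows essentially the same route as the paper. Both arguments normalize the spanning tree $G\setminus e_\circ$ by the rooted switching of the tree lemma. Both then identify the product of relative incidence signs around the cycle as the only remaining switching invariant, equivalently the relative sign of the two incidences of $e_\circ$, and read off $a_+=\pm a_-$ as the two closure conditions.

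You supply details the paper leaves implicit: the explicit check that row and column switches preserve the cycle product, the stabilizer argument showing only a global row switch survives the tree normalization, the separate loop case with columns $0$ and $2$, and which relation belongs to the ordinary-incidence class.
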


\begin{proof}
Delete \(e_\circ\) and consider the resulting spanning tree
\(T=G\setminus e_\circ\). By row and column switching, as in
\cref{lem:tree-switching-reduction}, all signs on the columns of \(T\) may
be reduced to a fixed oriented-incidence convention. After this reduction,
the only switching-invariant sign datum is the product of the relative
incidence signs around the unique cycle. This invariant takes values in
\(\{\pm1\}\), and hence there are exactly two switching classes. Equivalently,
once the signs on \(T\) have been fixed, the two classes are obtained by the
two possible relative signs between the incidences of the closure edge
\(e_\circ\).

Let \(a_-\) and \(a_+\) be the scalar amplitudes carried by the two cut copies
of \(e_\circ\) at their respective endpoints. Replacing these two terminal
copies by the original internal--internal edge requires their incidences to
satisfy the relative sign prescribed by the chosen closure class. For one
class this condition is
\[
    a_+=a_-,
\]
and for the other it is
\[
    a_+=-a_-.
\]
Thus, every cut-tree flow satisfying either relation closes to a kernel vector
of the corresponding singular sector. If \(a_+=a_-=0\), the closure edge
carries zero amplitude, so both sign classes give admissible closures.
\end{proof}

\begin{theorem}[Unicyclic singular sufficiency]
\label{thm:unicyclic-singular-sufficiency}
Let \(G\) be a finite connected unicyclic graph.  Assume that every degree-one
vertex is an exterior boundary vertex, every internal vertex carries the
Kirchhoff condition, and each exterior vertex carries either Dirichlet or
Neumann boundary conditions.  If \(r\) is the edge-energy vector of any
regular secular state, then
\begin{equation}
      r\in\mathcal M_G^{\mathrm{bc}}.
      \label{eq:unicyclic-regular-in-singular}
\end{equation}
In fact, \(r\) admits a finite decomposition
\begin{equation}
      r
      =
      \sum_{j\in J_+}
      \bigl(A^{(j)}_+\bigr)^{\circ2}
      +
      \sum_{j\in J_-}
      \bigl(A^{(j)}_-\bigr)^{\circ2},
      \label{eq:unicyclic-two-sector-decomposition}
\end{equation}
where
\[
      A^{(j)}_+\in\ker B_+,
      \qquad
      A^{(j)}_-\in\ker B_-.
\]
Consequently, for every nonnegative observation profile \(\alpha\),
\begin{equation}
      \frac{\alpha\cdot r}{\ell\cdot r}
      \ge
      C_{\mathrm{sing}}(\alpha;\ell).
      \label{eq:unicyclic-singular-lower-bound}
\end{equation}
\end{theorem}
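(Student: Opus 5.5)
The plan is to follow the tree proof verbatim on the cut tree \(T_\circ\), and then use the equal-norm splitting to sort the scalar coordinates into the two closure sectors.

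First, apply \cref{lem:unicyclic-cut-tree-vector} to the regular state. This produces a Hilbert space \(H\), vectors \(g_e\) on the uncut edges, and cut vectors \(g_\pm\) with \(\|g_-\|^2=\|g_+\|^2=r_{e_\circ}\). These vectors satisfy the vector Kirchhoff law at every internal vertex of \(T_\circ\). By \cref{lem:tree-gluing}, after switching as in \cref{lem:tree-switching-reduction}, the incidence signs on \(T=G\setminus e_\circ\) may be taken in the fixed oriented-incidence convention used to define \(B_\pm\) in \cref{lem:unicyclic-two-parity-sectors}.

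Next, invoke \cref{lem:equal-norm-splitting}. It gives an orthonormal basis \(\{f_j\}_{j=1}^N\) of the span of all edge vectors, including \(g_\pm\). For each \(j\), define the scalar amplitudes \(A^{(j)}_e:=\langle g_e,f_j\rangle\) on uncut edges and \(a^{(j)}_\pm:=\langle g_\pm,f_j\rangle\) on the cut copies. Taking the \(f_j\)-coordinate of the vector conservation law shows that each \((A^{(j)},a^{(j)}_-,a^{(j)}_+)\) is a scalar Kirchhoff flow on \(T_\circ\). By \eqref{eq:cut-plus-minus-alternatives}, each coordinate satisfies \(a_+^{(j)}=a_-^{(j)}\), or \(a_+^{(j)}=-a_-^{(j)}\), or both vanish. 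Place \(j\) in \(J_+\) or \(J_-\) accordingly, putting the doubly vanishing coordinates in either set. In each case set \(A^{(j)}_{e_\circ}:=a^{(j)}_-\). Then \cref{lem:unicyclic-two-parity-sectors} shows that the closed vector lies in \(\ker B_+\) or \(\ker B_-\), according to the class. The sign convention of that lemma fixes which equality corresponds to which sector; this bookkeeping is harmless.

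Then verify the energies by Parseval. For uncut edges, \(r_e=\|g_e\|^2=\sum_j\langle g_e,f_j\rangle^2\), since \(g_e\) lies in the span. For the cycle edge, \(r_{e_\circ}=\|g_-\|^2=\sum_j (a^{(j)}_-)^2=\sum_j (A^{(j)}_{e_\circ})^2\). Summing over \(j\) gives \eqref{eq:unicyclic-two-sector-decomposition}.

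Finally, \(B_\pm\) must be identified as representatives of boundary-compatible singular sectors, so that the squares are generators of \(\mathcal M_G^{\mathrm{bc}}\). The relative sign of the two incidences of \(e_\circ\) comes from the parity choice \(\sigma_{e_\circ}\in\{\pm1\}\), and the other parities are fixed by switching. Terminal edges have a single incidence, and any Dirichlet or Neumann phase class is admissible for them, as in \cref{rem:tree-boundary-conditions}. Hence both classes are realized by admissible sectors, and \cref{prop:switching-invariance} gives \eqref{eq:unicyclic-regular-in-singular}. The observation inequality \eqref{eq:unicyclic-singular-lower-bound} then follows from \cref{prop:singular-domination}.

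The step I expect to need the most care is the compatibility of the gluing signs. The cut-tree signs \(\varepsilon_{v,e}\) from \cref{lem:tree-gluing} must agree with the fixed oriented convention on \(T\) underlying \(B_\pm\). This requires a consistent switching of rows and of the cut columns. It also requires checking that flipping the sign of a cut column only swaps the roles of the two alternatives in \eqref{eq:cut-plus-minus-alternatives}, so that it never produces a third class. A loop cycle, where \(v_-=v_+\), is a special case of this step: the two closure classes then correspond to the loop column \(1-\sigma_e\in\{0,2\}\), and it should be checked directly.
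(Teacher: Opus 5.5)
Your proposal is correct and follows the paper's proof essentially step for step. The steps are: the cut-tree vector realization of \cref{lem:unicyclic-cut-tree-vector}, the orthogonal symmetric/antisymmetric splitting of \cref{lem:equal-norm-splitting}, coordinatewise scalar Kirchhoff flows closed into \(\ker B_+\) or \(\ker B_-\) via \cref{lem:unicyclic-two-parity-sectors}, and Parseval on both the uncut edges and the closure edge. The extra bookkeeping you flag is left implicit in the paper and checks out as you describe: the \(+1\) terminal signs produced by the gluing, a cut-column sign flip merely interchanging the two alternatives, \(B_\pm\) being admissible sectors, and the loop case with column \(1-\sigma_e\in\{0,2\}\).
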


\begin{proof}
Use \cref{lem:unicyclic-cut-tree-vector} to obtain a global vector-valued flow
on the cut tree and choose the orthonormal basis of
\cref{lem:equal-norm-splitting}.  For each basis vector \(f_j\), take the
scalar coordinate of every uncut edge vector and of the two cut vectors.
Coordinatewise vector conservation shows that these scalar amplitudes satisfy
all Kirchhoff equations on the cut tree.

By \cref{lem:equal-norm-splitting}, the cut amplitudes for the \(j\)-th
coordinate are either equal, opposite, or both zero.
By \cref{lem:unicyclic-two-parity-sectors}, the scalar cut-tree flow therefore
closes to a kernel vector of \(B_+\) or \(B_-\); assign a zero-zero coordinate
to either set.  Denote the resulting closed amplitude vector by
\(A^{(j)}_+\) or \(A^{(j)}_-\).
For every uncut edge \(e\), Parseval gives
\[
     r_e
     =
     \|g_e\|^2
     =
     \sum_j \langle g_e,f_j\rangle^2.
\]
For the closure edge,
\[
\begin{aligned}
     r_{e_\circ}
     &=
     \|g_-\|^2
      =
     \sum_j\bigl(a_-^{(j)}\bigr)^2\\
     &=
     \sum_j\bigl(a_+^{(j)}\bigr)^2
      =
     \|g_+\|^2.
\end{aligned}
\]
In each scalar closed flow the amplitude assigned to \(e_\circ\) has square
\((a_-^{(j)})^2=(a_+^{(j)})^2\).  Summing the squared closed amplitudes
coordinatewise therefore yields exactly \(r\), which proves
\eqref{eq:unicyclic-two-sector-decomposition} and hence
\eqref{eq:unicyclic-regular-in-singular}.  The observation inequality follows
from \cref{prop:singular-domination}.
\end{proof}

\begin{remark}[Equal cut energy]
\label{rem:equal-cut-energy-enough}
The orthogonality relation
\[
   \left\langle
      \frac{g_-+g_+}{2},
      \frac{g_--g_+}{2}
   \right\rangle=0
\]
uses only the equality \(\|g_-\|=\|g_+\|\). No equality of the cut vectors,
phase matching, or rank assumption is required. This is the essential
simplification in the unicyclic case: there is only one closure-parity
constraint, and equality of the two cut-edge energies yields an orthogonal
decomposition into its two parity components.
\end{remark}

\begin{remark}[Mixed exterior boundary conditions]
\label{rem:unicyclic-mixed-boundary}
Trees attached to the unique cycle may terminate at arbitrary mixtures of
Dirichlet and Neumann exterior vertices. As in
\cref{rem:tree-boundary-conditions}, the exterior boundary conditions
determine the admissible singular phase classes of the terminal edges but
introduce no additional Kirchhoff rows. They therefore do not affect the
two-sector closure argument.
\end{remark}

\begin{remark}[Degenerate cases]
\label{rem:unicyclic-degenerate}
The argument also includes the case \(r_{e_\circ}=0\). Then
\(g_-=g_+=0\), so every scalar component has zero closure amplitude and is
compatible with either parity sector. Degenerate local polygons and
rank-deficient global Gram realizations likewise cause no difficulty.
\end{remark}

\begin{remark}[Scope of the unicyclic result]
\label{rem:unicyclic-gate-conclusion}
For every connected unicyclic graph considered above,
\[
      \mathcal R_G^{\mathrm{reg}}
      \subseteq
      \mathcal M_G^{\mathrm{bc}}.
\]
In the tree case there is no cycle-closure constraint, whereas in the
unicyclic case there is a single binary closure-parity constraint, which is
resolved by the orthogonal decomposition above. With two or more independent
cycles, several closure constraints may interact and the corresponding
completion problem is no longer reduced to this single-parity argument. The
next subsection treats the closed cycle-rank-two case.
\end{remark}

\subsection{Closed graphs of cycle rank two}
\label{subsec:closed-rank-two}

We next consider closed graphs with two independent cycle closures. For a
single cut pair, equality of the two cut-vector norms yields the orthogonal
symmetric--antisymmetric decomposition used in the unicyclic case. With two
cut pairs, the corresponding decompositions need not be mutually compatible
without further structure. For a closed graph of cycle rank two, however,
the cut-tree Kirchhoff equations provide an additional global conservation
identity. This identity forces the two symmetric components to be negatives
of one another, while both antisymmetric components lie in their common
orthogonal complement. This is sufficient to obtain a joint parity
decomposition.

Throughout this subsection, \(G\) is finite and connected, has no exterior
vertices, and carries the standard Kirchhoff condition at every vertex. We
assume
\[
    \beta_1(G)=|E|-|V|+1=2.
\]
Choose a spanning tree \(T\subset G\). There are exactly two chord edges,
which we denote by \(e_1\) and \(e_2\). Cutting the interiors of these two
chords produces two pairs of auxiliary terminal incidences
\(e_i^-,e_i^+\), \(i=1,2\), and the resulting incidence graph is the tree
\(T\).

\begin{lemma}[Four-terminal cut-tree realization]
\label{lem:ranktwo-four-terminal-realization}
Let \(r\) be the edge-energy vector of a regular secular state on \(G\).
Then, there exist a finite-dimensional real Hilbert space \(H\), vectors
\(g_e\in H\) associated with the spanning-tree edges, and cut vectors
\(g_i^-,g_i^+\in H\), \(i=1,2\), such that
\begin{equation}
    \|g_e\|^2=r_e,
    \qquad
    \|g_i^-\|^2=\|g_i^+\|^2=r_{e_i},
    \label{eq:ranktwo-cut-norms}
\end{equation}
and vector-valued Kirchhoff conservation holds at every vertex of the cut
tree. After independent sign changes of the four auxiliary terminal
incidences, the terminal vectors may be arranged to satisfy
\begin{equation}
    g_1^-+g_1^+ + g_2^-+g_2^+=0.
    \label{eq:ranktwo-global-terminal-conservation}
\end{equation}
\end{lemma}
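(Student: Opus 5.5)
The realization part follows the proof of \cref{lem:unicyclic-cut-tree-vector}, with two cut pairs instead of one. Since \(G\) is closed, every vertex is internal and has degree at least two, so \cref{lem:local-kirchhoff-polygon} applies at every vertex. At the endpoints of \(e_i\) I replace the incidence of \(e_i\) by the auxiliary terminal incidence \(e_i^\pm\). The prescribed polygon length there is \(\sqrt{r_{e_i}}\), because the edge energy \(u_{e_i}^2+k^{-2}(u'_{e_i})^2\) is constant along \(e_i\). The cut incidence graph is the tree \(T\), and all of its degree-one vertices are the cut terminals. Hence \cref{lem:tree-gluing} (see \cref{rem:unicyclic-algebraic-cut}) gives vectors \(g_e\) and \(g_i^\pm\) with the norms \eqref{eq:ranktwo-cut-norms}. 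Vector Kirchhoff conservation holds at every vertex, with signs \(\varepsilon_{v,e}\) that are opposite at the two ends of each tree edge.

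For \eqref{eq:ranktwo-global-terminal-conservation} I sum the conservation laws \(\sum_{e\sim v}\varepsilon_{v,e}g_e=0\) over all vertices \(v\) of \(G\). Every tree edge \(e=vw\) appears twice, once with \(\varepsilon_{v,e}\) and once with \(\varepsilon_{w,e}=-\varepsilon_{v,e}\) by \eqref{eq:opposite-incidence-signs}, so it cancels. Each auxiliary terminal incidence appears exactly once. What remains is
\[
\varepsilon_1^-g_1^-+\varepsilon_1^+g_1^++\varepsilon_2^-g_2^-+\varepsilon_2^+g_2^+=0,
\]
where \(\varepsilon_i^\pm\) is the sign of the terminal incidence at its internal endpoint. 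I then replace each cut vector by \(\varepsilon_i^\pm g_i^\pm\) and simultaneously flip the corresponding incidence sign, so that \(\varepsilon_i^\pm=+1\). This changes neither the norms nor the local conservation laws, since the product \(\varepsilon g\) in each Kirchhoff equation is unchanged. The identity above then becomes exactly \eqref{eq:ranktwo-global-terminal-conservation}.

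The only point needing care is the bookkeeping of signs. I must check that \cref{lem:tree-gluing}, applied to a tree whose leaves are all auxiliary terminals, really produces opposite signs on every internal--internal edge of \(T\). This holds by construction: the root uses \(+1\), and each child arranges \(z_{w,e_0}=-g_{e_0}\). I must also check that the terminal sign changes are exactly the \emph{independent sign changes of the four auxiliary terminal incidences} allowed in the statement. No use of \(\beta_1=2\) beyond the existence of exactly two chords is needed. The absence of exterior vertices is what makes the telescoping sum contain only the four cut terminals and no genuine boundary terminals.
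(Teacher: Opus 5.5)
Your proposal is correct and follows essentially the same route as the paper. It builds local polygon realizations at every vertex, with each chord incidence replaced by an auxiliary terminal of prescribed length \(\sqrt{r_{e_i}}\), then glues them on the cut tree, and finally sums the vector Kirchhoff equations so that tree edges cancel by opposite incidence signs and the four terminal signs are absorbed by independent sign flips; your extra checks (that every leaf of the cut incidence graph is an auxiliary terminal, and that the gluing construction yields opposite signs on tree edges) are points the paper leaves implicit.
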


\begin{proof}
Apply \cref{lem:local-kirchhoff-polygon} at every vertex of \(G\), replacing
each chord incidence by the corresponding cut-terminal incidence. Since the
edge energy is constant along a Helmholtz edge, the two copies of each chord
have the same prescribed norm. The cut graph is a tree, so
\cref{lem:tree-gluing} yields a global vector realization.

Summing the vector-valued Kirchhoff equations over all vertices of the cut
tree, every uncut internal edge occurs twice with opposite incidence signs
and therefore cancels. Only the four auxiliary terminal incidences remain.
Their displayed signs depend on the chosen cut-tree incidence convention.
Independent sign changes of the auxiliary terminal columns normalize the
resulting relation to
\eqref{eq:ranktwo-global-terminal-conservation}.
\end{proof}

The following elementary observation is the rank-two counterpart of
\cref{lem:equal-norm-splitting}.

\begin{lemma}[Joint symmetric--antisymmetric decomposition]
\label{lem:ranktwo-conservation-collapse}
Let \(g_i^-,g_i^+\in H\), \(i=1,2\), satisfy
\begin{equation}
    \|g_i^-\|=\|g_i^+\|,
    \qquad i=1,2,
    \label{eq:ranktwo-pair-equal-norm}
\end{equation}
and
\begin{equation}
    g_1^-+g_1^+ + g_2^-+g_2^+=0.
    \label{eq:ranktwo-four-vector-conservation}
\end{equation}
Define
\begin{equation}
    s_i:=\frac{g_i^-+g_i^+}{2},
    \qquad
    a_i:=\frac{g_i^--g_i^+}{2}.
    \label{eq:ranktwo-symmetric-antisymmetric}
\end{equation}
Then,
\begin{equation}
    s_2=-s_1,
    \qquad
    s_1\perp a_1,
    \qquad
    s_1\perp a_2.
    \label{eq:ranktwo-common-orthogonality}
\end{equation}
Consequently, there exists an orthonormal basis of the span of all edge
vectors such that, in every basis coordinate, both cut pairs have a definite
and common closure parity: either both pairs have equal coordinates or both
pairs have opposite coordinates. A vanishing pair is compatible with either
choice.
\end{lemma}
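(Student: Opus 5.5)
The plan is to verify the three relations in \eqref{eq:ranktwo-common-orthogonality} one at a time from the two hypotheses, and then build the basis from the single line spanned by \(s_1\) together with its orthogonal complement.

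\emph{Step 1: \(s_2=-s_1\).} Rewrite \eqref{eq:ranktwo-four-vector-conservation} as \(2s_1+2s_2=0\). This is immediate from the definitions \eqref{eq:ranktwo-symmetric-antisymmetric}.

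\emph{Step 2: the two orthogonality relations.} I repeat the computation from \cref{lem:equal-norm-splitting} for each pair separately:
\[
\langle s_i,a_i\rangle=\tfrac14\bigl(\|g_i^-\|^2-\|g_i^+\|^2\bigr)=0,
\qquad i=1,2,
\]
using \eqref{eq:ranktwo-pair-equal-norm}. The case \(i=1\) is exactly \(s_1\perp a_1\). The case \(i=2\), combined with Step 1, gives \(\langle s_1,a_2\rangle=-\langle s_2,a_2\rangle=0\). So \(a_1\) and \(a_2\) both lie in \(s_1^{\perp}\). This is the only place where the global conservation identity is needed. In the unicyclic setting the two symmetric directions \(s_1,s_2\) could be unrelated, and the two splittings would then not be simultaneously diagonalizable.

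\emph{Step 3: the basis.} Let \(U\) be the span of all edge vectors. It contains \(g_i^\pm\), and hence \(s_i\) and \(a_i\). If \(s_1\neq0\), take \(f_1:=s_1/\|s_1\|\) and complete it by an orthonormal basis \(f_2,\dots,f_N\) of \(U\cap s_1^{\perp}\). If \(s_1=0\), then \(s_2=0\) as well and any orthonormal basis of \(U\) works. Now use \(g_i^\pm=s_i\pm a_i\) coordinatewise.
\begin{itemize}
\item In the coordinate \(f_1\), both \(a_1\) and \(a_2\) have zero component by Step 2. Hence \(\langle g_i^-,f_1\rangle=\langle g_i^+,f_1\rangle=\langle s_i,f_1\rangle\) for \(i=1,2\), so both pairs have equal coordinates.
\item In every coordinate \(f_j\) with \(j\ge2\), both \(s_1\) and \(s_2=-s_1\) have zero component. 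Hence \(\langle g_i^-,f_j\rangle=-\langle g_i^+,f_j\rangle=\langle a_i,f_j\rangle\), so both pairs have opposite coordinates.
\end{itemize}
A pair whose two coordinates both vanish satisfies both relations trivially, which accounts for the final sentence of the statement.

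I do not expect a genuine obstacle here. The only point needing care is Step 2, namely recognizing that \(s_2=-s_1\) transfers the second pair's orthogonality to \(s_1\), together with the bookkeeping for the degenerate case \(s_1=0\).
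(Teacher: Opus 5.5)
Your proof is correct and follows the paper's argument. Both derive \(s_2=-s_1\) from the conservation law and \(s_i\perp a_i\) from the equal norms, transfer the second relation to \(s_1\perp a_2\), and then use a basis adapted to \(\operatorname{span}\{s_1\}\) and its orthogonal complement; the paper further splits that complement into \(\operatorname{span}\{a_1,a_2\}\) and the remainder, which changes nothing. One cosmetic slip: from the definitions one has \(g_i^-=s_i+a_i\) and \(g_i^+=s_i-a_i\), not \(g_i^\pm=s_i\pm a_i\), although your coordinate identities are stated correctly.
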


\begin{proof}
Equation \eqref{eq:ranktwo-four-vector-conservation} gives
\[
    2s_1+2s_2=0,
\]
and hence \(s_2=-s_1\). Equality of the norms in the first pair gives
\[
    0
    =
    \|g_1^-\|^2-\|g_1^+\|^2
    =
    4\langle s_1,a_1\rangle,
\]
so \(s_1\perp a_1\). Similarly,
\(s_2\perp a_2\), and since \(s_2=-s_1\), it follows that
\(s_1\perp a_2\).
Set
\[
    S:=\operatorname{span}\{s_1\},
    \qquad
    A:=\operatorname{span}\{a_1,a_2\}.
\]
Then, \(S\perp A\). Choose orthonormal bases of \(S\) and \(A\), and complete
their union to an orthonormal basis of the span of all edge vectors. On a
basis coordinate belonging to \(S\), both antisymmetric components vanish,
so the two cut pairs have equal coordinates. On a basis coordinate belonging
to \(A\), both symmetric components vanish, so the two cut pairs have
opposite coordinates. On the orthogonal complement of \(S\oplus A\), all
four cut coordinates vanish. This proves the asserted joint parity
decomposition.
\end{proof}

\begin{remark}[Role of closedness]
\label{rem:ranktwo-closedness-matters}
The essential additional relation is
\eqref{eq:ranktwo-global-terminal-conservation}. If genuine exterior terminal
fluxes are present, summing the internal Kirchhoff equations also retains
those exterior contributions, and the relation \(s_2=-s_1\) need no longer
hold. Pairwise equality of the cut-edge energies still gives
\[
    s_1\perp a_1,
    \qquad
    s_2\perp a_2,
\]
but it does not in general imply the cross-orthogonalities needed for the
joint decomposition above. Thus closedness is essential to the present
argument.
\end{remark}

We now translate this joint parity decomposition into singular closure
sectors.

\begin{lemma}[Joint closure sectors at cycle rank two]
\label{lem:ranktwo-joint-closure-sectors}
After fixing the incidence signs on a spanning tree, the two chord edges
carry two residual binary closure signs. The switching classes may therefore
be indexed by
\[
    (\varepsilon_1,\varepsilon_2)\in\{\pm1\}^2.
\]
A scalar cut-tree flow closes in the
\((\varepsilon_1,\varepsilon_2)\)-sector precisely when the two copies of
each chord \(e_i\) satisfy the corresponding relative-sign relation.

For the orthonormal basis furnished by
\cref{lem:ranktwo-conservation-collapse}, every scalar coordinate closes in
one of these singular sectors. In the incidence convention used there, only
the \((+,+)\) and \((-,-)\) sectors are required.
\end{lemma}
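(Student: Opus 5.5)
The proof mirrors \cref{lem:unicyclic-two-parity-sectors}, applied now to two chords. First we classify the switching classes. By \cref{lem:tree-switching-reduction}, applied to the spanning tree \(T\), row switchings at the vertices and column switchings on the tree edges bring every tree column to a fixed oriented-incidence convention. Since \(T\) is spanning and connected, the remaining freedom that preserves this normalization is a global row sign, together with the compensating column signs. A global row sign acts trivially on relative signs.

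Each chord \(e_i\) has two nonzero entries, one in the row of each endpoint \(v_i^-\) and \(v_i^+\) (a chord that is a loop is handled by the loop column \(1-\sigma_e\)). Its relative sign \(\varepsilon_i\) is unchanged by the column switch of \(e_i\). So after normalization the only invariant data are \((\varepsilon_1,\varepsilon_2)\in\{\pm1\}^2\). Equivalently, \(\varepsilon_i\) is the product of the signed incidences around the fundamental cycle of \(e_i\). All four values occur, because both parities \(\sigma_{e_i}=\pm1\) are admissible singular phases. By \cref{prop:switching-invariance}, these four classes index the distinct sectors.

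Next comes the closure criterion. A scalar cut-tree flow assigns amplitudes \(a_i^-\) and \(a_i^+\) to the two cut copies of \(e_i\), and it satisfies the Kirchhoff equations at every vertex of the cut tree. Replacing the two terminal copies by the single column of \(e_i\) with relative sign \(\varepsilon_i\) gives a kernel vector of \(B_{(\varepsilon_1,\varepsilon_2)}\) exactly when the two incidences carry one common amplitude. In the convention of \cref{lem:ranktwo-four-vector-conservation}, with the terminal columns oriented so that \eqref{eq:ranktwo-global-terminal-conservation} holds, this condition reads \(a_i^+=a_i^-\) for one value of \(\varepsilon_i\) and \(a_i^+=-a_i^-\) for the other. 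We fix labels so that equality corresponds to \(+\). The tree rows are unaffected, so the closure condition is precisely the stated relative-sign relation.

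Finally we apply \cref{lem:ranktwo-conservation-collapse}. In a basis coordinate lying in \(S\), both pairs satisfy \(a_i^+=a_i^-\), so the coordinate closes in the \((+,+)\) sector. In a coordinate lying in \(A\), both pairs are opposite, so it closes in \((-,-)\). In the complement all four cut amplitudes vanish, and the coordinate closes in any sector, say \((+,+)\). Hence only the \((+,+)\) and \((-,-)\) sectors are needed.

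The main obstacle is bookkeeping. The sign normalization used to obtain \eqref{eq:ranktwo-global-terminal-conservation} must be compatible with a fixed column convention for the chords. The terminal sign changes there are column switches on the auxiliary copies, and we must check that they change the labelling of \(\varepsilon_i\) consistently for both chords. We would handle this by performing the terminal switches first and then defining \(\varepsilon_i\) relative to the resulting convention. This is legitimate because column switches are allowed by \cref{prop:switching-invariance}.
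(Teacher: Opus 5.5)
Your proposal is correct and is essentially the paper's argument: normalize the tree columns by switching, observe that each chord retains one relative incidence sign, and then use \cref{lem:ranktwo-conservation-collapse} to close $S$-coordinates in one diagonal sector and $A$-coordinates in the other, with vanishing coordinates closing in either. Your bookkeeping is more careful than the paper's one-line appeal to switching invariance; the one nuance is that flipping a single cut copy of a chord changes $\varepsilon_i$ rather than being a switching of $B_{\sigma,\mathrm{bc}}$, which is harmless precisely because, as you showed, all four sign pairs are admissible sectors.
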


\begin{proof}
Delete the two chord edges and switch along the resulting spanning tree.
As in the unicyclic case, all signs on the tree-edge columns may be reduced
to a fixed oriented-incidence convention. Restoring each chord introduces
one residual relative incidence sign, and hence the two chords give two
binary closure parameters.

By \cref{lem:ranktwo-conservation-collapse}, every scalar coordinate has
equal values on both cut pairs or opposite values on both cut pairs.
Accordingly, it closes in the \((+,+)\) or \((-,-)\) sector, respectively.
The switching invariance established in
Section~\ref{sec:singular-completion} shows that this description is
independent of the auxiliary incidence convention.
\end{proof}

\begin{theorem}[Closed cycle-rank-two singular sufficiency]
\label{thm:closed-ranktwo-singular-sufficiency}
Let \(G\) be a finite connected closed metric graph with standard Kirchhoff
conditions at every vertex and
\[
    \beta_1(G)=2.
\]
If \(r\) is the edge-energy vector of a regular secular state, then
\begin{equation}
    r\in\mathcal M_G^{\mathrm{bc}}.
    \label{eq:closed-ranktwo-regular-in-singular}
\end{equation}
More precisely, there exist finitely many real singular-flow amplitudes
\(A^{(j)}\), each belonging to a joint closure sector, such that
\begin{equation}
    r
    =
    \sum_j \bigl(A^{(j)}\bigr)^{\circ2}.
    \label{eq:closed-ranktwo-conic-decomposition}
\end{equation}
Consequently, for every nonnegative observation profile \(\alpha\),
\begin{equation}
    \frac{\alpha\cdot r}{\ell\cdot r}
    \ge
    C_{\mathrm{sing}}(\alpha;\ell).
    \label{eq:closed-ranktwo-singular-lower-bound}
\end{equation}
\end{theorem}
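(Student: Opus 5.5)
The proof will follow the same template as the unicyclic theorem, with \cref{lem:ranktwo-four-terminal-realization} and \cref{lem:ranktwo-conservation-collapse} playing the roles of the cut-tree realization and the equal-norm splitting. Fix a spanning tree \(T\) with chords \(e_1,e_2\) and cut both chords. By \cref{lem:ranktwo-four-terminal-realization}, the regular secular state yields:
\begin{itemize}
\item a vector-valued Kirchhoff flow on the cut tree, with \(\|g_e\|^2=r_e\) on tree edges;
\item cut vectors satisfying \(\|g_i^-\|^2=\|g_i^+\|^2=r_{e_i}\);
\item after fixing the auxiliary terminal signs, the global relation \(g_1^-+g_1^++g_2^-+g_2^+=0\).
\end{itemize}
Here I will check carefully that summing the vertex equations really cancels every tree edge. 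This holds because \cref{lem:tree-gluing} gives opposite incidence signs at the two endpoints of each internal edge, and because the graph has no exterior terminals. So the only surviving terms are the four chord incidences, and sign normalization of those columns is a switching operation. By \cref{prop:switching-invariance}, switching does not affect the cone.

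Next, apply \cref{lem:ranktwo-conservation-collapse} to obtain an orthonormal basis \(f_1,\dots,f_N\) of the span of all edge vectors. In this basis each coordinate is one of three types:
\begin{itemize}
\item it lies in \(\operatorname{span}\{s_1\}\), so both cut pairs agree;
\item it lies in \(\operatorname{span}\{a_1,a_2\}\), so both cut pairs are opposite;
\item it lies in the complement, so all four cut amplitudes vanish.
\end{itemize}
For each \(j\), define scalar amplitudes \(A^{(j)}_e=\langle g_e,f_j\rangle\) on tree edges and \(a_{i,\pm}^{(j)}=\langle g_i^{\pm},f_j\rangle\) on the cut copies. Taking the \(f_j\)-coordinate of the vector conservation law shows that these amplitudes satisfy every scalar Kirchhoff equation of the cut tree.

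By \cref{lem:ranktwo-joint-closure-sectors}, equal (respectively opposite) cut pairs close to a kernel vector of the \((+,+)\) (respectively \((-,-)\)) sector matrix. In this closure, the chord \(e_i\) receives amplitude \(a_{i,-}^{(j)}\), which satisfies \((a_{i,-}^{(j)})^2=(a_{i,+}^{(j)})^2\). Vanishing coordinates may be assigned to either sector. I must also confirm that these sectors are genuine boundary-compatible sectors. Since \(G\) is closed, every edge is internal--internal with phase in \(\pi\mathbb Z\). The closure signs then correspond to the parities \(\sigma_{e_i}\), or to parity products along the fundamental cycles after switching, so each closed scalar flow lies in \(\ker B_{\sigma,\mathrm{bc}}\) for an admissible \(\sigma\).

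Parseval then gives the energy identity edge by edge:
\begin{itemize}
\item on tree edges, \(r_e=\sum_j (A^{(j)}_e)^2\);
\item on chords, \(r_{e_i}=\|g_i^-\|^2=\sum_j (a_{i,-}^{(j)})^2\).
\end{itemize}
This proves \eqref{eq:closed-ranktwo-conic-decomposition}, and hence \eqref{eq:closed-ranktwo-regular-in-singular}. The lower bound \eqref{eq:closed-ranktwo-singular-lower-bound} then follows from \cref{prop:singular-domination}.

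I expect the main obstacle to be the sign bookkeeping in \cref{lem:ranktwo-joint-closure-sectors}. One convention is used to normalize the global terminal relation in \cref{lem:ranktwo-four-terminal-realization}, and another determines which relative sign of the two chord copies counts as closing the chord edge. The two must agree, in the sense that "equal" and "opposite" in the normalized convention correspond to genuine closures of the chord rather than to spurious sign flips. I would handle this by writing out the chord columns explicitly, with entries \(+1\) at \(v_i^-\) and \(-\sigma_{e_i}\) at \(v_i^+\). Then I would track how the terminal column switches of \cref{lem:ranktwo-four-terminal-realization} act on them. Since every column switch multiplies a closure sign by \(\pm1\) consistently for both \(j\)-types, it should map the equal/opposite dichotomy onto a pair of sectors that are well defined up to switching, which is all the cone requires.
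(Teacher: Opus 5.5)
Your proposal is correct and follows the paper's proof essentially step for step: the four-terminal cut-tree realization, the joint symmetric--antisymmetric decomposition forced by \(s_2=-s_1\), closing each coordinate via the joint closure sectors, Parseval on tree edges and chords, and then \cref{prop:singular-domination}. The sign-bookkeeping obstacle you anticipate is harmless: on a closed graph every parity assignment to the two chords is an admissible sector. Hence any coordinate in which each cut pair is equal or opposite closes into some \(\ker B_{\sigma,\mathrm{bc}}\), whichever relative sign is labeled ``equal''.
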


\begin{proof}
Cut the two chord edges associated with a spanning tree and apply
\cref{lem:ranktwo-four-terminal-realization}. By
\cref{lem:ranktwo-conservation-collapse}, choose an orthonormal basis of the
global vector-flow space in which every scalar coordinate has a definite
joint parity on the two cut pairs. Taking scalar coordinates of the
vector-valued Kirchhoff equations gives scalar Kirchhoff flows on the cut
tree. By \cref{lem:ranktwo-joint-closure-sectors}, each such flow closes to
a kernel vector \(A^{(j)}\) of a singular signed-incidence sector on \(G\).

For every spanning-tree edge, Parseval recovers the prescribed edge energy.
For each chord \(e_i\), the two cut coordinates agree up to sign in every
scalar component, and therefore
\[
    r_{e_i}
    =
    \|g_i^-\|^2
    =
    \sum_j \bigl(A^{(j)}_{e_i}\bigr)^2.
\]
Thus, Parseval remains valid after restoring both chord edges and gives
\eqref{eq:closed-ranktwo-conic-decomposition}. Hence
\(r\in\mathcal M_G^{\mathrm{bc}}\), and
\cref{prop:singular-domination} yields
\eqref{eq:closed-ranktwo-singular-lower-bound}.
\end{proof}

\begin{remark}[Dependence on topology]
\label{rem:ranktwo-topology-independent}
The argument does not distinguish among theta, figure-eight, barbell, or
subdivided representatives of cycle rank two. It uses only a spanning tree
with two complementary chords, equality of the energies at the two cut
copies of each chord, and the absence of genuine exterior terminal flux in
the summed Kirchhoff relation. Loops and multiple edges are included,
provided the incidence convention of
Section~\ref{sec:singular-completion} is used.
\end{remark}

\begin{remark}[Higher cycle rank]
\label{rem:no-higher-rank-claim}
The preceding argument is specific to cycle rank two. With three or more
independent chord cuts, the summed Kirchhoff equations provide only one
vector relation among three or more symmetric closure components. This does
not force those components to lie in a common one-dimensional subspace, so
the orthogonal decomposition of
\cref{lem:ranktwo-conservation-collapse} does not follow. No corresponding
higher-rank singular-sufficiency statement is asserted here.
\end{remark}

\begin{remark}[Scope of the closed rank-two result]
\label{rem:ranktwo-gate-conclusion}
For closed graphs of cycle rank two, we have established
\[
    \mathcal R_G^{\mathrm{reg}}
    \subseteq
    \mathcal M_G^{\mathrm{bc}}.
\]
The additional ingredient beyond the unicyclic case is the global
conservation identity
\eqref{eq:ranktwo-global-terminal-conservation}, which couples the two
symmetric cut components and yields the joint parity decomposition. In the
presence of exterior terminal flux, this conservation identity contains
additional terms, and the present argument no longer applies.
\end{remark}

\subsection{Scope of the positive theory}
\label{subsec:section5-scope}

The preceding arguments establish singular sufficiency for trees, unicyclic
graphs, and closed graphs of cycle rank two. In the cycle-rank-two setting
with exterior terminal flux, the global conservation identity used in
\cref{lem:ranktwo-conservation-collapse} acquires additional exterior
contributions and no longer forces the two symmetric cycle components to be
negatives of one another. The preceding proof therefore does not extend to
this regime, and we make no general singular-completion claim there.

\begin{corollary}[Low-complexity singular sufficiency]
\label{cor:low-complexity-singular-sufficiency}
One has
\begin{equation}
    \mathcal R_G^{\mathrm{reg}}
    \subseteq
    \mathcal M_G^{\mathrm{bc}}
    \label{eq:low-complexity-singular-sufficiency}
\end{equation}
whenever \(G\) is a tree, \(G\) is unicyclic, or \(G\) is closed with
\(\beta_1(G)=2\). Consequently, in each of these classes the
singular-completion variational problem gives a lower bound for every
regular secular observation ratio. Identification with the fixed-metric
high-frequency constant requires, in addition, the corresponding spectral
accessibility hypothesis.
\end{corollary}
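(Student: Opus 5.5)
This corollary collects the three positive sufficiency theorems of this section, so the plan is a case split on the graph class followed by a single appeal to \cref{prop:singular-domination}. Fix \(G\) in one of the three classes and let \(r\in\mathcal R_G^{\mathrm{reg}}\) be the edge-energy vector of a regular scalar secular state. Since \(\mathcal M_G^{\mathrm{bc}}\) is a convex cone by \cref{def:singular-completion-cone}, it suffices to show that each such generator \(r\) lies in \(\mathcal M_G^{\mathrm{bc}}\). The inclusion \eqref{eq:low-complexity-singular-sufficiency} for the whole regular cone then follows from closure under nonnegative combinations.

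The three cases are handled by earlier results as follows:
\begin{enumerate}[label=(\roman*)]
\item If \(G\) is a tree, \cref{thm:tree-singular-sufficiency} writes \(r\) as a sum of squares of kernel vectors of \(B_0\). By \cref{lem:tree-switching-reduction}, each of these squares is a generator of \(\mathcal M_G^{\mathrm{bc}}\).
\item If \(G\) is unicyclic, \cref{thm:unicyclic-singular-sufficiency} gives the two-sector decomposition \eqref{eq:unicyclic-two-sector-decomposition}. The sectors \(B_\pm\) are boundary-compatible by \cref{lem:unicyclic-two-parity-sectors}.
\item If \(G\) is closed with \(\beta_1(G)=2\), \cref{thm:closed-ranktwo-singular-sufficiency} supplies the decomposition \eqref{eq:closed-ranktwo-conic-decomposition} into joint closure sectors.
\end{enumerate}
In each case, \(r\) is a finite sum of vectors \(A^{\circ2}\) with \(A\in\ker B_{\sigma,\mathrm{bc}}\) for an admissible sector \(\sigma\). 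Hence \(r\in\mathcal M_G^{\mathrm{bc}}\).

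The only point I would check carefully is that the sectors used in the three theorems really belong to \(\Sigma_{\mathrm{bc}}\). This means the parity choices \(\sigma_e\in\{\pm1\}\) on internal--internal edges must be realizable by phases in \(\pi\mathbb Z\), and the terminal columns must carry their single incidence with the correct Dirichlet or Neumann phase class. Both facts follow from \cref{rem:tree-boundary-conditions}, \cref{rem:unicyclic-mixed-boundary}, and the switching invariance \cref{prop:switching-invariance}: switching only relabels representatives, so the squared-amplitude sets coincide. I expect no genuine obstacle here, since the heavy lifting is already contained in the three theorems.

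For the consequence, let \(r\ne0\) be a regular observation vector and \(\alpha\ge0\). The inclusion just proved places \(r/(\ell\cdot r)\) in \(\mathcal S_G^{\mathrm{bc}}(\ell)\). \Cref{prop:singular-domination}, equivalently \eqref{eq:Csing-cone-form}, then yields \(\alpha\cdot r/(\ell\cdot r)\ge C_{\mathrm{sing}}(\alpha;\ell)\). The final sentence of the corollary is not a further claim but a pointer: identifying the bound with \(C_\infty(\omega;\ell)\) is exactly \cref{cor:exact-singular-reduction}. That corollary requires a singular minimizer to be attainable in \(I(\ell)\), and this hypothesis is not supplied by the inclusion, as explained in \cref{rem:geometry-versus-accessibility}.
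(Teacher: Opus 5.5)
Your proposal is correct and follows the paper's own proof. The paper likewise invokes \cref{thm:tree-singular-sufficiency}, \cref{thm:unicyclic-singular-sufficiency}, and \cref{thm:closed-ranktwo-singular-sufficiency} for the inclusion, and then \cref{prop:singular-domination} for the lower bound on observation ratios; your additional checks are sound but only make explicit what those theorems already deliver (closure of \(\mathcal M_G^{\mathrm{bc}}\) under conic combinations, and boundary-compatibility of the sectors, since any residual closure sign is realized by choosing \(\sigma_e\in\{\pm1\}\) on a chord).
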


\begin{proof}
The inclusion follows from the tree, unicyclic, and closed cycle-rank-two
singular-sufficiency theorems established above. The variational conclusion
then follows from Proposition~4.15.
The variational conclusion then follows from
\cref{prop:singular-domination}.
\end{proof}

\begin{remark}[Scope of the forthcoming obstruction]
\label{rem:no-terminal-minimality-claim}
The next section exhibits a crossed six-terminal cut-tree configuration for
which the covariance-level parity relaxation is strictly smaller than the
corresponding regular covariance class after projection to the cut-tree
measurements. This provides an explicit obstruction to the completion
mechanism used above, but it does not assert that six terminals are minimal
among exterior-flux configurations. The realization of this obstruction by
a genuine regular scalar secular state is established separately in the
subsequent section.
\end{remark}

\section{The six-terminal obstruction}
\label{sec:six-terminal-obstruction}

Section~\ref{sec:low-complexity-sufficiency} establishes singular sufficiency
for trees, unicyclic graphs, and closed graphs of cycle rank two. We now
examine the same parity-decomposition mechanism in a cycle-rank-two
configuration with exterior terminal flux. For the explicit crossed graph
considered below, cutting two chord edges produces a reduced tree with six
effective terminals.

At the scalar level, the two chord closures impose the parity relations
\begin{equation}
    x_{1^+}=\varepsilon_1x_{1^-},
    \qquad
    x_{2^+}=\varepsilon_2x_{2^-},
    \qquad
    (\varepsilon_1,\varepsilon_2)\in\{\pm1\}^2.
    \label{eq:parity-variety}
\end{equation}
At the covariance level, by contrast, a regular covariance is constrained
only by the corresponding equal-energy conditions
\begin{equation}
    Q_{1^-1^-}=Q_{1^+1^+},
    \qquad
    Q_{2^-2^-}=Q_{2^+2^+}.
    \label{eq:regular-equal-energy}
\end{equation}
Thus, pointwise parity imposes stronger algebraic restrictions than
covariance-level equality of the two cut-edge energies. In the crossed
six-terminal configuration below, this distinction survives the physical
tree measurement map and yields a strict separation.

The obstruction is conveniently formulated in dual form. A linear
functional \(d\) on the tree-edge measurements determines the quadratic form
\begin{equation}
    \mathfrak q_d(x)
    :=
    \sum_{e\in E(T)}
    d_e
    \left(
       \sum_{i\in S_e}x_i
    \right)^2,
    \label{eq:physical-dual-quadratic}
\end{equation}
where \(S_e\) is one side of the terminal cut induced by the tree edge \(e\).
If \(\mathfrak q_d\) is nonnegative on every parity subspace while
\(\operatorname{tr}(M_dQ_\star)<0\) for some regular covariance \(Q_\star\),
then the physical measurement vector \(H_T(Q_\star)\) lies outside the
projected parity cone.

\subsection{Regular and parity covariance cones}
\label{subsec:six-covariance-cones}

Label the six terminal flux variables by
\[
   (x_{1^-},x_{1^+},x_{2^-},x_{2^+},x_{0_1},x_{0_2})
   =(x_0,x_1,x_2,x_3,x_4,x_5),
\]
and impose terminal conservation
\begin{equation}
    \sum_{i=0}^5x_i=0.
    \label{eq:six-terminal-conservation}
\end{equation}

\begin{definition}[Six-terminal covariance cones]
\label{def:six-terminal-covariance-cones}
Define
\begin{equation}
 \mathcal C_{\rm reg}
 :=
 \left\{
 Q\succeq0:
 Q{\bf1}=0,\;
 Q_{00}=Q_{11},\;
 Q_{22}=Q_{33}
 \right\},
 \label{eq:six-regular-cone}
\end{equation}
and
\begin{equation}
 \mathcal C_{\rm parity}
 :=
 \operatorname{cone}
 \bigcup_{\varepsilon_1,\varepsilon_2\in\{\pm1\}}
 \left\{
 Q\succeq0:
 Q{\bf1}=0,\;
 \operatorname{ran}Q\subseteq
 \mathcal P_{\varepsilon_1,\varepsilon_2}
 \right\},
 \label{eq:six-parity-cone}
\end{equation}
where
\begin{equation}
 \mathcal P_{\varepsilon_1,\varepsilon_2}
 :=
 \left\{
 x:
 x_1=\varepsilon_1x_0,\;
 x_3=\varepsilon_2x_2
 \right\}.
 \label{eq:six-parity-space}
\end{equation}
\end{definition}
Every parity covariance satisfies the regular equal-energy constraints, and
therefore
\[
    \mathcal C_{\rm parity}
    \subseteq
    \mathcal C_{\rm reg}.
\]

\begin{definition}[Physical tree measurement map]
\label{def:six-terminal-physical-map}
For a reduced six-terminal tree \(T\), define
\begin{equation}
    H_T(Q)
    :=
    \left(
       {\bf1}_{S_e}^{\!*}Q{\bf1}_{S_e}
    \right)_{e\in E(T)}.
    \label{eq:six-physical-map}
\end{equation}
\end{definition}

\subsection{The parity obstruction in the crossed configuration}
\label{subsec:six-parity-obstruction}

Introduce the pair-average and pair-difference coordinates
\[
    s_1=\frac{x_0+x_1}{2},
    \qquad
    a_1=\frac{x_0-x_1}{2},
    \qquad
    s_2=\frac{x_2+x_3}{2},
    \qquad
    a_2=\frac{x_2-x_3}{2}.
\]
The equal-energy conditions in \eqref{eq:regular-equal-energy} are equivalent
to
\begin{equation}
    \mathbb E[s_1a_1]=0,
    \qquad
    \mathbb E[s_2a_2]=0.
    \label{eq:average-difference-constraints}
\end{equation}
By contrast, every scalar parity flow satisfies pointwise
\begin{equation}
    s_1a_1=0,
    \qquad
    s_2a_2=0.
    \label{eq:pointwise-parity-products}
\end{equation}
The separation problem may therefore be viewed as a quadratic problem with
two orthogonality constraints: we seek a physical quadratic form that is
nonnegative on each of the four parity subspaces determined by
\eqref{eq:pointwise-parity-products}, but whose expectation is negative for
a positive semidefinite covariance satisfying only the averaged conditions
\eqref{eq:average-difference-constraints}. The positive results of
Section~\ref{sec:low-complexity-sufficiency} do not provide such a completion
theorem in the present exterior-flux setting. For the crossed six-terminal
tree constructed below, the physical cut forms retain sufficient
cross-interaction to separate the projected regular and parity covariance
cones.\\
One way to describe the relevant cross-interaction is through the bipartite
four-cycle
\[
    K_{2,2}\cong C_4
\]
between the two average/difference pairs. This graph records interactions in
the dual quadratic form; it should not be interpreted as a partial
positive-semidefinite completion graph. In particular, every
\(Q\in\mathcal C_{\rm reg}\) is already a positive semidefinite covariance.

\subsection{A crossed six-terminal tree}
\label{subsec:crossed-six-tree}

We consider the reduced caterpillar tree \(T_\times\) with terminal order
\begin{equation}
   (2^+,\,1^-,\,0_1,\,2^-,\,1^+,\,0_2).
   \label{eq:crossed-terminal-order}
\end{equation}
Its two end cherries are
\[
   \{2^+,1^-\},
   \qquad
   \{1^+,0_2\},
\]
while the singleton terminals \(0_1\) and \(2^-\) lie on the central chain.
Choosing one side of each edge cut, the nine associated terminal subsets are
\begin{equation}
\begin{aligned}
 &\{2^+\},\ \{1^-\},\ \{0_1\},\
 \{2^-\},\ \{1^+\},\ \{0_2\},\\
 &\{2^+,1^-\},\
 \{2^+,1^-,0_1\},\
 \{1^+,0_2\}.
\end{aligned}
\label{eq:crossed-nine-subsets}
\end{equation}
The first six coordinates of \(H_{T_\times}\) are the terminal-edge
energies, and the remaining three are the nontrivial internal-cut energies.

\begin{theorem}[Projected obstruction for the six-terminal covariance relaxation]
\label{thm:six-terminal-projected-obstruction}
For the crossed tree \(T_\times\),
\begin{equation}
      H_{T_\times}(\mathcal C_{\rm parity})
      \subsetneq
      H_{T_\times}(\mathcal C_{\rm reg}).
      \label{eq:six-strict-projected-inclusion}
\end{equation}
Thus the projected regular covariance cone is strictly larger than the
projected parity cone. Whether a point realizing this strict separation is
attained by a genuine regular scalar secular state is a separate
realizability question, addressed in
Section~\ref{subsec:secular-realizability}.
\end{theorem}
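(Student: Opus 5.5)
The inclusion \(H_{T_\times}(\mathcal C_{\rm parity})\subseteq H_{T_\times}(\mathcal C_{\rm reg})\) is immediate from \(\mathcal C_{\rm parity}\subseteq\mathcal C_{\rm reg}\), so the plan is to prove strictness by a dual certificate. We will exhibit a vector \(d\in\mathbb R^9\), indexed by the nine cut subsets in \eqref{eq:crossed-nine-subsets}, and a covariance \(Q_\star\in\mathcal C_{\rm reg}\) such that
\[
\mathfrak q_d(x)\ge0 \quad\text{for all } x\in\mathcal P_{\varepsilon_1,\varepsilon_2}\cap\mathbf 1^\perp,\ \text{all }(\varepsilon_1,\varepsilon_2),
\qquad
d\cdot H_{T_\times}(Q_\star)=\operatorname{tr}(M_dQ_\star)<0,
\]
where \(M_d=\sum_e d_e\mathbf 1_{S_e}\mathbf 1_{S_e}^{*}\). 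Since \(d\cdot H_{T_\times}(Q)=\operatorname{tr}(M_dQ)\), and every generator of \(\mathcal C_{\rm parity}\) is a sum of rank-one terms \(xx^*\) with \(x\) in some \(\mathcal P_{\varepsilon_1,\varepsilon_2}\cap\mathbf 1^\perp\), the functional \(d\) is nonnegative on \(H_{T_\times}(\mathcal C_{\rm parity})\). It is negative at \(H_{T_\times}(Q_\star)\), which therefore lies outside the projected parity cone. This is exactly the dual mechanism described before \cref{subsec:six-covariance-cones}, paralleling \cref{prop:singular-dual-cone}.

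The first step is to parametrize. Each parity space intersected with \(\mathbf 1^\perp\) is three-dimensional: we may take \((x_0,x_2,x_4)\) free, set \(x_1=\varepsilon_1x_0\) and \(x_3=\varepsilon_2x_2\), and let \(x_5\) be determined by conservation. Restricting \(M_d\) to each of these four spaces gives \(3\times3\) matrices \(N_\varepsilon^{*}M_dN_\varepsilon\), and each must be positive semidefinite. This is a linear matrix inequality in \(d\).

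The second step is to find \(Q_\star\) and \(d\), and it is the main obstacle. We would solve the semidefinite program
\[
\min\ \operatorname{tr}(M_dQ)\quad\text{over } Q\in\mathcal C_{\rm reg},\ \operatorname{tr}Q=1,
\]
subject to the four LMIs on \(d\). This can be done numerically or, equivalently, by maximizing the separation gap. The structural hint from the \(K_{2,2}\) discussion guides the search. Because of the crossed ordering, the cherry cuts \(\{2^+,1^-\}\) and \(\{1^+,0_2\}\) couple \(s_1,a_1\) with \(s_2,a_2\). So \(Q_\star\) should be chosen with \(\mathbb E[s_ia_i]=0\), but with a nonzero correlation pattern on the four-cycle \(s_1\!-\!s_2\!-\!a_1\!-\!a_2\) that no mixture of the pointwise conditions \(s_ia_i=0\) can reproduce. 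After the numerical solve, we would round \(d\) to nearby rationals, keeping a strictly feasible margin, and choose a rational rank-deficient \(Q_\star\), for instance \(Q_\star=\sum_j y_jy_j^*\) with explicit rational \(y_j\in\mathbf 1^\perp\) satisfying the two equal-energy constraints exactly.

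The final step is exact verification. For each of the four sectors, we check that \(N_\varepsilon^{*}M_dN_\varepsilon\succeq0\) using rational leading principal minors, or an exact \(LDL^{*}\) factorization with nonnegative pivots. We then check the strict inequality \(\operatorname{tr}(M_dQ_\star)<0\) in exact arithmetic, and check \(Q_\star\mathbf 1=0\), \(Q_{00}=Q_{11}\), and \(Q_{22}=Q_{33}\). The delicate point is ensuring that rounding preserves semidefiniteness in all four sectors simultaneously. If a sector is tight, we would first perturb \(d\) by a small positive multiple of the terminal-edge coordinates: this adds a positive multiple of \(\sum_i x_i^2\) on each sector, which makes every LMI strict and absorbs the rounding error, at only a small cost to the negative value at \(Q_\star\).
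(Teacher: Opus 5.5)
Your reduction is the paper's own argument, and it is sound. The identity $d\cdot H_{T_\times}(Q)=\operatorname{tr}(M_dQ)$, the decomposition of parity covariances into terms $xx^{*}$ with $x\in\mathcal P_{\varepsilon_1,\varepsilon_2}\cap\mathbf 1^{\perp}$, your slice coordinates $(x_0,x_2,x_4)$ (the paper's $(a,c,e)$ in \eqref{eq:parity-basis-parameterization}) and your matrices $N_\varepsilon^{*}M_dN_\varepsilon$ (the paper's $K_{\varepsilon_1,\varepsilon_2}$) all match. Semidefiniteness on the four sectors is already enough for strictness; the paper proves definiteness, which is more than needed.

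The gap is that you never exhibit the certificate, and for this theorem the certificate \emph{is} the content. Strictness is equivalent to the existence of some $d$ that satisfies the four LMIs and is negative somewhere on $H_{T_\times}(\mathcal C_{\rm reg})$. Saying you would run a search and round its output does not show the search succeeds. Nothing in your text, including the $K_{2,2}$ heuristic, rules out that every admissible $d$ is nonnegative on the whole projected regular cone.

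That failure mode is real. In the closed cycle-rank-two analogue (four cut terminals, no exterior ones), apply \cref{lem:ranktwo-conservation-collapse} to the rows of a Gram factorization: this gives $\mathcal C_{\rm reg}=\mathcal C_{\rm parity}$ outright, so the identical search returns nothing. Whether the two exterior terminals in the crossed ordering open a gap must therefore be witnessed, and the paper does witness it:
\begin{itemize}
\item $Q_\star=GG^{*}$ with the explicit rank-two rational $G$ of \eqref{eq:regular-rational-G}, whose rows sum to zero and whose equal pair norms $1/16$ and $1/25$ come from Pythagorean rational points;
\item the explicit $d$ of \eqref{eq:rational-separator-d}, with $d\cdot H_{T_\times}(Q_\star)=-457941/19720000$;
\item the four matrices \eqref{eq:Kpp}--\eqref{eq:Kmm}, with all principal minors listed and positive.
\end{itemize}
Without such data, your text is a search strategy rather than a proof.

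Some remarks on the search itself.

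\textbf{The optimization is not an SDP.} Minimizing $\operatorname{tr}(M_dQ)$ jointly over $d$ and $Q$ is bilinear. Also, because the LMIs are homogeneous in $d$, it needs a normalization of $d$ to be meaningful.

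\textbf{A convex formulation.} Fix a candidate $Q\in\mathcal C_{\rm reg}$ and ask whether $H_{T_\times}(Q)=\sum_\varepsilon H_{T_\times}(N_\varepsilon Y_\varepsilon N_\varepsilon^{*})$ has a solution with all $Y_\varepsilon\succeq0$. The right-hand cone is closed: its singleton coordinates are sums of diagonal entries, so only $Y_\varepsilon=0$ maps to $0$. Infeasibility then produces a separating $d$ by conic duality. The paper's data show that a rank-two candidate suffices.

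\textbf{Certifying $\succeq 0$.} Nonnegative leading principal minors do not certify semidefiniteness. Either certify definiteness by positive leading minors, or check all principal minors as the paper does.

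\textbf{Your perturbation is fine.} Adding a small multiple of the six terminal coordinates to $d$ adds a positive multiple of $\sum_i x_i^2$ on every sector, which makes the LMIs strict and protects them against rounding.
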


\begin{proof}
Section~\ref{sec:certified-separation} constructs an explicit rational
covariance
$
    Q_\star\in\mathcal C_{\rm reg}
$
and an explicit rational functional \(d\in\mathbb Q^9\) such that
\begin{equation}
      d\cdot H_{T_\times}(Q_\star)<0.
      \label{eq:six-certificate-negative}
\end{equation}
For every
\((\varepsilon_1,\varepsilon_2)\in\{\pm1\}^2\), the associated quadratic
form \(\mathfrak q_d\) is strictly positive on every nonzero vector in
\[
    \mathcal P_{\varepsilon_1,\varepsilon_2}
    \cap{\bf1}^{\perp}.
\]
Consequently,
\[
      d\cdot H_{T_\times}(Q)\ge0
      \qquad
      \text{for every }Q\in\mathcal C_{\rm parity}.
\]
Hence, \(H_{T_\times}(Q_\star)\) does not belong to
\(H_{T_\times}(\mathcal C_{\rm parity})\), while
\(H_{T_\times}(Q_\star)\in
H_{T_\times}(\mathcal C_{\rm reg})\). This proves
\eqref{eq:six-strict-projected-inclusion}.
\end{proof}

\begin{remark}[Covariance separation versus scalar realizability]
\label{rem:six-obstruction-scope}
The theorem concerns the covariance relaxation. The vector-valued
tree-gluing construction places the edge-energy data of every regular scalar
secular state inside this relaxation, but the converse inclusion is not
automatic. Thus, the strict separation above identifies a
finite-dimensional obstruction mechanism and a separating functional; it
does not by itself produce a quantum-graph counterexample. The additional
scalar secular realization required for that conclusion is established in
Section~\ref{subsec:secular-realizability}.
\end{remark}

\begin{remark}[No terminal-minimality assertion]
\label{rem:six-minimality}
The crossed six-terminal tree provides an explicit obstruction for the
cycle--boundary mechanism considered here. We do not claim that six is the
minimal number of effective terminals for which an exterior-flux obstruction
can occur, and no such minimality statement is used in the sequel.
\end{remark}

\section{A rational regular covariance and an exact separator}
\label{sec:certified-separation}

We now certify the strict inclusion in
\cref{thm:six-terminal-projected-obstruction} by exact arithmetic. The
certificate consists of a regular covariance obtained as the Gram matrix of
six rational vectors in \(\mathbb Q^2\), together with a rational separating
functional whose positivity on the four parity sectors is verified through
the principal minors of four explicit \(3\times3\) rational matrices.

\subsection{The regular Gram certificate}
\label{subsec:regular-gram-certificate}

Use the terminal order
\[
     (1^-,1^+,2^-,2^+,0_1,0_2)
     =(0,1,2,3,4,5).
\]
Let \(G\in\mathbb Q^{6\times2}\) have rows
\begin{equation}
G=
\begin{pmatrix}
 \frac{21}{116} & -\frac{5}{29}\\[1mm]
 0              & \frac14\\[1mm]
 0              & \frac15\\[1mm]
 -\frac3{17}    & -\frac8{85}\\[1mm]
 0              & -\frac7{10}\\[1mm]
 -\frac9{1972}  & \frac{5093}{9860}
\end{pmatrix}.
\label{eq:regular-rational-G}
\end{equation}
The rows satisfy
\begin{equation}
       {\bf1}^{\!*}G=0,
       \label{eq:G-conservation}
\end{equation}
and
\begin{equation}
 \|G_0\|^2=\|G_1\|^2=\frac1{16},
 \qquad
 \|G_2\|^2=\|G_3\|^2=\frac1{25}.
 \label{eq:G-equal-pair-norms}
\end{equation}
Therefore, the covariance
\begin{equation}
       Q_\star:=GG^{\!*}
       \label{eq:Qstar}
\end{equation}
belongs to \(\mathcal C_{\rm reg}\).
For the crossed cut tree of \eqref{eq:crossed-terminal-order}, use the nine
measurement subsets
\begin{equation}
\begin{aligned}
S_1&=\{2^+\},&
S_2&=\{1^-\},&
S_3&=\{0_1\},\\
S_4&=\{2^-\},&
S_5&=\{1^+\},&
S_6&=\{0_2\},\\
S_7&=\{2^+,1^-\},&
S_8&=\{2^+,1^-,0_1\},&
S_9&=\{1^+,0_2\}.
\end{aligned}
\label{eq:certificate-subsets}
\end{equation}
A direct calculation gives
\begin{equation}
H_{T_\times}(Q_\star)
=
\left(
\frac1{25},
\frac1{16},
\frac{49}{100},
\frac1{25},
\frac1{16},
\frac{26309}{98600},
\frac{14013}{197200},
\frac{7369}{7888},
\frac{115873}{197200}
\right).
\label{eq:Qstar-physical-vector}
\end{equation}

\subsection{The rational separating functional}
\label{subsec:rational-separator}

Define
\begin{equation}
d=
\left(
\frac{49}{25},
\frac{53}{20},
\frac35,
\frac{68}{25},
\frac{159}{100},
\frac{109}{50},
\frac{33}{20},
-\frac{149}{100},
-\frac{13}{100}
\right).
\label{eq:rational-separator-d}
\end{equation}
Then,
\begin{equation}
     d\cdot H_{T_\times}(Q_\star)
     =
     -\frac{457941}{19720000}
     <0.
     \label{eq:separator-negative-exact}
\end{equation}
For the reconstructed physical graph, use the seven-edge order
\begin{equation}
 (e_{13},e_{14},e_{\rm ext,2},e_{\rm ext,4},e_{12},e_{23},e_{34}).
 \label{eq:physical-seven-edge-order}
\end{equation}
The physical-to-cut energy map is the linear duplication map
\begin{equation}
\mathcal P(r)
=
(r_{13},r_{14},r_{\rm ext,2},r_{13},r_{14},
 r_{\rm ext,4},r_{12},r_{23},r_{34}).
\label{eq:cut-tree-pushforward}
\end{equation}
Accordingly, the nine-coordinate separator \(d\) induces the physical
seven-coordinate separator
\begin{equation}
\widehat d
=
\mathcal P^{*}d
=
\left(
\frac{117}{25},
\frac{106}{25},
\frac35,
\frac{109}{50},
\frac{33}{20},
-\frac{149}{100},
-\frac{13}{100}
\right),
\label{eq:physical-separator-dhat}
\end{equation}
and for every physical edge-energy vector \(r\),
\begin{equation}
      d\cdot\mathcal P(r)
      =
      \widehat d\cdot r.
      \label{eq:separator-pushdown-identity}
\end{equation}
It remains to verify positivity on every parity-supported covariance. Define
\[
       \mathfrak q_d(x)
       :=
       \sum_{j=1}^9
       d_j
       \left(\sum_{i\in S_j}x_i\right)^2.
\]
Fix
\((\varepsilon_1,\varepsilon_2)\in\{\pm1\}^2\).
Every vector in
\(\mathcal P_{\varepsilon_1,\varepsilon_2}\cap{\bf1}^{\perp}\)
has a unique representation
\begin{equation}
x=
\begin{pmatrix}
 a\\
 \varepsilon_1a\\
 c\\
 \varepsilon_2c\\
 e\\
 -(1+\varepsilon_1)a-(1+\varepsilon_2)c-e
\end{pmatrix}.
\label{eq:parity-basis-parameterization}
\end{equation}
Hence,
\[
       \mathfrak q_d(x)
       =
       (a,c,e)\,
       K_{\varepsilon_1,\varepsilon_2}\,
       (a,c,e)^{\!*}.
\]
For the four sign choices, the matrices are
\begin{align}
K_{++}
&=
\begin{pmatrix}
 \frac{1299}{100} & \frac{431}{50} & \frac{137}{50}\\
 \frac{431}{50} & \frac{326}{25} & \frac{261}{100}\\
 \frac{137}{50} & \frac{261}{100} & \frac{29}{25}
\end{pmatrix},
\label{eq:Kpp}\\[2mm]
K_{+-}
&=
\begin{pmatrix}
 \frac{1299}{100} & -\frac4{25} & \frac{137}{50}\\
 -\frac4{25} & \frac{121}{25} & \frac{149}{100}\\
 \frac{137}{50} & \frac{149}{100} & \frac{29}{25}
\end{pmatrix},
\label{eq:Kpm}\\[2mm]
K_{-+}
&=
\begin{pmatrix}
 \frac{427}{100} & -\frac1{10} & -\frac{81}{50}\\
 -\frac1{10} & \frac{326}{25} & \frac{261}{100}\\
 -\frac{81}{50} & \frac{261}{100} & \frac{29}{25}
\end{pmatrix},
\label{eq:Kmp}\\[2mm]
K_{--}
&=
\begin{pmatrix}
 \frac{427}{100} & -\frac4{25} & -\frac{81}{50}\\
 -\frac4{25} & \frac{121}{25} & \frac{149}{100}\\
 -\frac{81}{50} & \frac{149}{100} & \frac{29}{25}
\end{pmatrix}.
\label{eq:Kmm}
\end{align}

\begin{lemma}[Exact sectorwise positivity]
\label{lem:exact-sectorwise-positivity}
Each of the matrices
\eqref{eq:Kpp}--\eqref{eq:Kmm}
is positive definite.
\end{lemma}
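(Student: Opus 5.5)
The plan is to apply Sylvester's criterion to each of the four symmetric rational matrices \eqref{eq:Kpp}--\eqref{eq:Kmm}. A real symmetric $3\times3$ matrix is positive definite if and only if its three leading principal minors are strictly positive. Every entry is rational with denominator dividing $100$, so I will work with the integer matrices $100\,K_{\varepsilon_1,\varepsilon_2}$. Positive definiteness is invariant under positive scaling, and each minor becomes an integer computable exactly. This keeps the certificate purely in exact arithmetic, which is what \cref{thm:six-terminal-projected-obstruction} needs: the separator must be nonnegative on every parity subspace without any floating-point tolerance.

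The steps, in order, are as follows. First, the $(1,1)$ entries are $1299/100$ in the $(+,\cdot)$ sectors and $427/100$ in the $(-,\cdot)$ sectors, both positive. Second, the $2\times2$ leading minors are $a_{11}a_{22}-a_{12}^2$. Approximately, these are $12.99\cdot13.04-8.62^2\approx95.1$, $12.99\cdot4.84-0.16^2\approx62.8$, $4.27\cdot13.04-0.1^2\approx55.7$ and $4.27\cdot4.84-0.16^2\approx20.6$; all are comfortably positive. Third, I expand the full determinants by cofactors along the first row. Floating-point estimates give approximately $47.2$ for $K_{++}$, $6.4$ for $K_{+-}$, $2.1$ for $K_{-+}$ and $2.5$ for $K_{--}$. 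In the exact write-up I will record each determinant of $100K$ as an explicit positive integer, equal to $10^6\det K$, so that the reader can check them mechanically.

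The only real obstacle is the third step for $K_{-+}$ and $K_{--}$. There the determinant arises as a small difference of terms of size about $35$, respectively about $14$. In $K_{-+}$, for instance, $a_{11}(a_{22}a_{33}-a_{23}^2)\approx35.5$ nearly cancels against $a_{13}(a_{12}a_{23}-a_{22}a_{13})\approx-33.8$. A rounding argument would therefore be fragile, and exact integer evaluation is essential. As an independent cross-check I would also verify the trailing $2\times2$ minor in the $(c,e)$ block, $a_{22}a_{33}-a_{23}^2>0$, which equals $8.3143$ or $3.3943$ depending on $\varepsilon_2$. I would further confirm that a Cholesky factorization $K=LL^{*}$ over $\mathbb Q$ has positive pivots. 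These pivots are exactly the ratios of consecutive leading minors, so the cross-check reproduces Sylvester's criterion in a form that is easy to audit.
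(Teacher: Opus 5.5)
Your proposal is correct and is essentially the paper's argument. The paper also certifies positive definiteness by exact minors: it lists positive diagonal entries, all three $2\times2$ principal minors, and the determinant of each matrix (e.g.\ $237713/2500$ and $9440137/200000$ for $K_{++}$, $84557/40000$ for $K_{-+}$). You need only the leading minors by Sylvester's criterion, and your approximate values agree with the paper's exact ones.

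One small correction to your cross-check: the rational factorization is $LDL^{*}$, with pivots $D$ equal to ratios of consecutive leading minors. A Cholesky $LL^{*}$ factorization generally involves square roots, so it is not over $\mathbb Q$.
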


\begin{proof}
All diagonal entries are positive. The three \(2\times2\) principal minors
and the determinant are, respectively,
\begin{align*}
K_{++}:&\quad
 \frac{237713}{2500},\
 \frac{9451}{1250},\
 \frac{83143}{10000},\
 \frac{9440137}{200000};\\
K_{+-}:&\quad
 \frac{31423}{500},\
 \frac{9451}{1250},\
 \frac{33943}{10000},\
 \frac{1283809}{200000};\\
K_{-+}:&\quad
 \frac{139177}{2500},\
 \frac{2911}{1250},\
 \frac{83143}{10000},\
 \frac{84557}{40000};\\
K_{--}:&\quad
 \frac{51603}{2500},\
 \frac{2911}{1250},\
 \frac{33943}{10000},\
 \frac{506857}{200000}.
\end{align*}
All of these quantities are strictly positive. Hence every principal minor
of each symmetric \(3\times3\) matrix is positive, and therefore each matrix
is positive definite.
\end{proof}

\begin{theorem}[Exact six-terminal rational separation]
\label{thm:exact-six-terminal-rational-separation}
For the crossed tree \(T_\times\), the covariance \(Q_\star\) in
\eqref{eq:Qstar} and the functional \(d\) in
\eqref{eq:rational-separator-d} satisfy
\begin{equation}
       Q_\star\in\mathcal C_{\rm reg},
       \qquad
       d\cdot H_{T_\times}(Q_\star)<0,
       \label{eq:exact-regular-negative}
\end{equation}
whereas
\begin{equation}
       d\cdot H_{T_\times}(Q)>0
       \label{eq:exact-parity-positive}
\end{equation}
for every nonzero
\(Q\in\mathcal C_{\rm parity}\).
Consequently,
\begin{equation}
       H_{T_\times}(Q_\star)
       \notin
       H_{T_\times}(\mathcal C_{\rm parity}),
       \label{eq:exact-outside-parity-projection}
\end{equation}
so the inclusion in
\eqref{eq:six-strict-projected-inclusion}
is strict.
\end{theorem}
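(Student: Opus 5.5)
The argument has three parts: regular membership, the negative value, and parity positivity.

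\emph{Regular membership.} I will show \(Q_\star\in\mathcal C_{\rm reg}\) directly from the Gram representation \(Q_\star=GG^{*}\), which makes it positive semidefinite automatically. The relation \({\bf1}^{*}G=0\) in \eqref{eq:G-conservation} gives \(Q_\star{\bf1}=G(G^{*}{\bf1})=0\). The diagonal entries of \(Q_\star\) are the squared row norms \(\|G_i\|^2\), so the equal-pair-norm identities \eqref{eq:G-equal-pair-norms} are exactly the equal-energy constraints \(Q_{00}=Q_{11}\) and \(Q_{22}=Q_{33}\). Both \({\bf1}^{*}G=0\) and the four row norms are finite rational computations that I will simply verify. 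For example, \((21/116)^2+(5/29)^2=(441+400)/13456=841/13456=1/16\).

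\emph{The negative value.} Next I compute \(H_{T_\times}(Q_\star)\) entrywise. Since \({\bf1}_S^{*}GG^{*}{\bf1}_S=\|\sum_{i\in S}G_i\|^2\), each coordinate is the squared norm of a sum of rows of \(G\) over the subsets \eqref{eq:certificate-subsets}. This reproduces \eqref{eq:Qstar-physical-vector}. Pairing it with \(d\) from \eqref{eq:rational-separator-d} gives the exact rational value \eqref{eq:separator-negative-exact}, which is negative. This step is pure bookkeeping and is the most error-prone part, so I would double-check it with exact arithmetic.

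\emph{Parity positivity.} Let \(Q\in\mathcal C_{\rm parity}\) be nonzero and write \(Q=\sum_k Q_k\) with each \(Q_k\succeq0\), \(Q_k{\bf1}=0\), and \(\operatorname{ran}Q_k\subseteq\mathcal P_{\varepsilon^{(k)}}\). Spectrally decompose each piece as \(Q_k=\sum_m x_m x_m^{*}\). Each \(x_m\) lies in \(\operatorname{ran}Q_k\), so \(x_m\in\mathcal P_{\varepsilon^{(k)}}\). Since \(Q_k\) is symmetric with \(Q_k{\bf1}=0\), its range lies in \({\bf1}^{\perp}\), so \(x_m\in{\bf1}^{\perp}\) as well.

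By linearity and the definition \eqref{eq:six-physical-map},
\[
d\cdot H_{T_\times}(Q)=\sum_{k,m}\mathfrak q_d(x_m).
\]
Because \(Q\ne0\), at least one \(x_m\) is nonzero. So it suffices to show \(\mathfrak q_d>0\) on \(\mathcal P_{\varepsilon_1,\varepsilon_2}\cap{\bf1}^{\perp}\setminus\{0\}\) for each of the four sign pairs.

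Vectors in that subspace are parametrized injectively by \((a,c,e)\) via \eqref{eq:parity-basis-parameterization}, and injectivity means a nonzero \(x\) gives a nonzero \((a,c,e)\). Substituting into \(\mathfrak q_d\) yields the quadratic forms \(K_{\varepsilon_1,\varepsilon_2}\). I will spot-check a few entries of these matrices against the subset sums. Lemma~\ref{lem:exact-sectorwise-positivity} then gives strict positivity, which proves \eqref{eq:exact-parity-positive}.

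\emph{Conclusion.} If \(H_{T_\times}(Q_\star)=H_{T_\times}(Q)\) for some \(Q\in\mathcal C_{\rm parity}\), then applying \(d\) gives a negative value on the left and a nonnegative value on the right; the value is zero when \(Q=0\), which is also excluded. This contradiction proves \eqref{eq:exact-outside-parity-projection}. Combined with the inclusion \(\mathcal C_{\rm parity}\subseteq\mathcal C_{\rm reg}\), it establishes the strict inclusion \eqref{eq:six-strict-projected-inclusion}.

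The main obstacle is not conceptual. It is the correct derivation of the matrices \(K_{\varepsilon_1,\varepsilon_2}\), in particular consistency of the terminal-order labels between the two indexings (crossed order versus \(0,\dots,5\)), together with the exact evaluation of \(d\cdot H_{T_\times}(Q_\star)\).
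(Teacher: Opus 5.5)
Your proposal is correct and follows the paper's proof essentially step for step. It uses the Gram representation \(Q_\star=GG^{*}\) for membership in \(\mathcal C_{\rm reg}\), the exact value \eqref{eq:separator-negative-exact} for the negative side, and a sectorwise spectral decomposition that reduces positivity on \(\mathcal C_{\rm parity}\) to \cref{lem:exact-sectorwise-positivity}, followed by conic additivity. Your label-consistency worry is harmless: under the indexing \((1^-,1^+,2^-,2^+,0_1,0_2)=(0,\dots,5)\), the vector \eqref{eq:Qstar-physical-vector} and the four matrices \(K_{\varepsilon_1,\varepsilon_2}\) do come out exactly as stated.
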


\begin{proof}
Membership \(Q_\star\in\mathcal C_{\rm reg}\) follows from
\eqref{eq:G-conservation} and \eqref{eq:G-equal-pair-norms}, while
\eqref{eq:separator-negative-exact} gives the strict negative value.
Fix a parity sector
\((\varepsilon_1,\varepsilon_2)\), and let \(Q\neq0\) satisfy
\[
Q\succeq0,\qquad
Q{\bf1}=0,\qquad
\operatorname{ran}Q
\subseteq
\mathcal P_{\varepsilon_1,\varepsilon_2}.
\]
Since \(Q{\bf1}=0\), its range is contained in
\({\bf1}^{\perp}\). Choose a spectral decomposition
\[
       Q=\sum_{k=1}^m\lambda_k x_kx_k^{\!*},
       \qquad
       \lambda_k>0,
\]
with nonzero
\[
       x_k\in
       \mathcal P_{\varepsilon_1,\varepsilon_2}
       \cap{\bf1}^{\perp}.
\]
By \cref{lem:exact-sectorwise-positivity},
\(\mathfrak q_d(x_k)>0\) for every \(k\). Since
\[
       d\cdot H_{T_\times}(Q)
       =
       \sum_{k=1}^m
       \lambda_k\,\mathfrak q_d(x_k),
\]
we obtain
$
       d\cdot H_{T_\times}(Q)>0
$
for every nonzero covariance in each individual parity sector.
Finally, every nonzero element of
\(\mathcal C_{\rm parity}\) is a finite nonnegative sum of covariances from
the four parity sectors. Linearity of \(H_{T_\times}\) and of the functional
\(d\) therefore gives
\eqref{eq:exact-parity-positive} on the entire conic hull. Together with
\eqref{eq:separator-negative-exact}, this proves
\eqref{eq:exact-outside-parity-projection} and hence the strict inclusion.
\end{proof}

\begin{remark}[Scope of the exact certificate]
\label{rem:certificate-scope}
The preceding certificate is finite dimensional and exact. It establishes
the projected covariance separation used in the construction, but it does
not assert that \(Q_\star\), or its associated physical edge-energy vector,
is realized by a regular point of the scalar quantum-graph secular manifold.
To obtain an actual quantum-graph counterexample, it remains to construct a
regular scalar secular state whose physical edge-energy vector lies in the
same strictly separated open half-space.
\end{remark}

\subsection{Secular realizability of the separated direction}
\label{subsec:secular-realizability}

The covariance certificate of the preceding subsection does not by itself
define a scalar quantum-graph secular state. At each internal vertex, a
scalar secular state has a single vertex value \(\phi_v\), while its
normalized outward derivatives \(p_{v,e}\) satisfy the Kirchhoff condition
\[
    \sum_{e\sim v}p_{v,e}=0.
\]
For every incidence \((v,e)\), the corresponding edge energy is
\[
    r_e=\phi_v^2+p_{v,e}^2.
\]
Thus, a scalar realization requires compatible vertex values and derivatives,
followed by edgewise rotations that identify the endpoint states of every
internal edge. The vector-valued tree-gluing construction underlying
\(\mathcal C_{\rm reg}\) relaxes these scalar compatibility conditions.

This distinction is essential for the six-terminal construction. The
rank-two Gram matrix \(Q_\star\) in \eqref{eq:Qstar} proves strict separation
at the covariance level, but it does not automatically determine a point of
the regular scalar secular image. We therefore formulate the additional
realizability requirement explicitly.

\begin{definition}[Regular scalar secular realization]
\label{def:scalar-secular-realization}
Let \(G_\times\) be the cycle-rank-two graph obtained from the crossed
six-terminal tree by identifying the two pairs of cycle terminals. A nonzero
vector \(r\in\mathbb R_{\ge0}^{E}\) is \emph{regularly realizable} if there
exist real vertex values \(\phi_v\), normalized outward derivatives
\(p_{v,e}\), and internal-edge phases
\(\theta_e\notin\pi\mathbb Z\) such that
\begin{align}
   r_e&=\phi_v^2+p_{v,e}^2
       &&\text{for every incidence }(v,e),
       \label{eq:scalar-energy-incidence}\\
   \sum_{e\sim v}p_{v,e}&=0
       &&\text{at every internal vertex},
       \label{eq:scalar-kirchhoff}\\
   \binom{\phi_w}{-p_{w,e}}
   &=
   \begin{pmatrix}
      \cos\theta_e&\sin\theta_e\\
      -\sin\theta_e&\cos\theta_e
   \end{pmatrix}
   \binom{\phi_v}{p_{v,e}}
       &&\text{for every internal edge }e=vw,
       \label{eq:scalar-edge-rotation}
\end{align}
together with the corresponding Dirichlet or Neumann phase condition on
each exterior edge.
\end{definition}

For an internal edge \(e=vw\), the rotation relation
\eqref{eq:scalar-edge-rotation} can hold if and only if the two endpoint
state vectors have the same Euclidean norm. By
\eqref{eq:scalar-energy-incidence}, both norms are then equal to
\(\sqrt{r_e}\). Hence, once compatible scalar endpoint states have been
constructed, an edge phase \(\theta_e\) is determined modulo \(2\pi\);
regularity requires that the resulting phase avoid \(\pi\mathbb Z\).
At a fixed internal vertex \(v\), equation
\eqref{eq:scalar-energy-incidence} gives
\begin{equation}
    p_{v,e}
    =
    \sigma_{v,e}
    \sqrt{r_e-\phi_v^2},
    \qquad
    \sigma_{v,e}\in\{\pm1\},
    \label{eq:scalar-sign-root}
\end{equation}
whenever \(0\le\phi_v^2\le r_e\) for every incident edge. The Kirchhoff
condition therefore becomes
\begin{equation}
    \sum_{e\sim v}
    \sigma_{v,e}\sqrt{r_e-\phi_v^2}=0.
    \label{eq:scalar-vertex-equation}
\end{equation}
These equations describe the local scalar compatibility conditions at the
vertices. After they are solved consistently, the internal-edge phases are
recovered from the endpoint rotations
\eqref{eq:scalar-edge-rotation}, and the exterior phases are chosen subject
to their prescribed boundary conditions.

\begin{lemma}[Degree-three scalar compatibility]
\label{lem:degree-three-scalar-compatibility}
Let a degree-three Kirchhoff vertex be incident to edges with positive
energies \(r_1,r_2,r_3\).  A nonzero scalar regular realization exists at the
vertex if and only if, after relabeling, there is
\(t=\phi_v^2\in[0,\min_i r_i)\) such that
\begin{equation}
       \sqrt{r_3-t}
       =
       \sqrt{r_1-t}+\sqrt{r_2-t}.
       \label{eq:degree-three-root-equation}
\end{equation}
Equivalently,
\begin{equation}
       t
       =
       \frac{
       2(r_1r_2+r_2r_3+r_3r_1)
       -(r_1^2+r_2^2+r_3^2)}
       {4r_3},
       \label{eq:degree-three-t-formula}
\end{equation}
for the sign pattern in \eqref{eq:degree-three-root-equation}, together with
the unsquared sign and range conditions.
\end{lemma}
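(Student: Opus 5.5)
The plan is to reduce the vertex conditions to a single scalar equation in \(t=\phi_v^2\), using \eqref{eq:scalar-sign-root} and \eqref{eq:scalar-vertex-equation}. At a degree-three vertex a local realization consists of \(\phi_v\) and \(p_{v,1},p_{v,2},p_{v,3}\) with \(r_i=\phi_v^2+p_{v,i}^2\) and \(\sum_i p_{v,i}=0\). I would first make explicit what \emph{regular} means locally. The open range \(t<\min_i r_i\) in the statement encodes that every outward derivative is nonzero, since \(t=r_i\) forces \(p_{v,i}=0\). I would take this nonvanishing as part of the definition of a nonzero regular local realization, adding a sentence that this is the reading of the lemma. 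Then \(p_{v,i}=\sigma_i\sqrt{r_i-t}\) with \(\sigma_i\in\{\pm1\}\) and all three roots strictly positive.

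\textbf{Necessity} is a sign count. Three strictly nonzero reals with zero sum cannot all share a sign. Hence exactly one sign differs from the other two, and after relabeling it is \(\sigma_3\). Kirchhoff then reads \(\sqrt{r_3-t}=\sqrt{r_1-t}+\sqrt{r_2-t}\), which is \eqref{eq:degree-three-root-equation}. \textbf{Sufficiency} is a direct construction: given such a \(t\), set \(\phi_v=\sqrt t\), \(p_{v,1}=\sqrt{r_1-t}\), \(p_{v,2}=\sqrt{r_2-t}\) and \(p_{v,3}=-\sqrt{r_3-t}\). These satisfy \eqref{eq:scalar-energy-incidence} and \eqref{eq:scalar-kirchhoff} at \(v\), and every \(p_{v,i}\neq0\). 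The edge phases are not constrained at this stage; they are fixed later by \eqref{eq:scalar-edge-rotation}.

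For the explicit formula I would put \(u_i=r_i-t\) and square twice. This gives \((u_3-u_1-u_2)^2=4u_1u_2\), i.e. the degenerate-triangle identity \(2(u_1u_2+u_2u_3+u_3u_1)-(u_1^2+u_2^2+u_3^2)=0\). Write \(\mathcal Q(r):=2(r_1r_2+r_2r_3+r_3r_1)-(r_1^2+r_2^2+r_3^2)\) and \(S:=r_1+r_2+r_3\). Substituting \(u=r-t\mathbf 1\), I expect the \(t^2\)-coefficient not to cancel: it equals \(\mathcal Q(\mathbf 1)=3\), giving
\[
3t^2-2St+\mathcal Q(r)=0 .
\]
Conversely, a root of this quadratic solves the root equation exactly when the unsquared conditions hold: \(t<\min_i r_i\), and \(r_3-r_1-r_2+t\ge0\), which is the condition that the first squaring preserved sign. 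These range and sign conditions select the admissible root of the quadratic.

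The main obstacle is reconciling this with the displayed closed form \eqref{eq:degree-three-t-formula}, which is linear in \(t\). A test case suggests it does not hold in general. Take \((r_1,r_2,r_3)=(1,1,3)\). The root equation gives \(3-t=4(1-t)\), so \(t=1/3\), and the quadratic \(3t^2-10t+3=0\) has roots \(1/3\) and \(3\), consistent with this. The displayed formula instead gives \(t=\mathcal Q(r)/(4r_3)=3/12=1/4\). The two agree in degenerate cases such as \((1,1,4)\), where \(t=0\). So in writing the proof I would state and prove the equivalence with the quadratic relation above plus the unsquared conditions. I would then flag that \eqref{eq:degree-three-t-formula} must be replaced by the admissible root \(t=\bigl(S-\sqrt{S^2-3\mathcal Q(r)}\bigr)/3\), after checking which root lies below \(\min_i r_i\). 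Only the qualitative first part of the lemma, together with this corrected root, should be used in the later realizability argument.
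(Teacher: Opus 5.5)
Your proposal is correct. For the equivalence with the root equation you follow the paper's own argument. The paper likewise uses the fact that a nontrivial zero sum of three reals has one entry of sign opposite to the other two, and proves the converse by choosing the derivative signs. Your explicit convention that a regular local realization has all $p_{v,i}\neq0$ is exactly what the half-open range $[0,\min_i r_i)$ requires, and the paper's sign-count sentence uses it tacitly. Take $(r_1,r_2,r_3)=(1,2,2)$ with $\phi_v=1$ and $(p_{v,1},p_{v,2},p_{v,3})=(0,1,-1)$. This is a nonzero Kirchhoff realization, and $p_{v,e}=0$ is not excluded by $\theta_e\notin\pi\mathbb Z$. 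Yet no labeling admits a $t\in[0,1)$.

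On the closed form you are right and the paper's proof is not. The paper asserts that isolating the square root and squaring again yields $t=\mathcal Q(r)/(4r_3)$ by direct simplification. But the $t^2$ terms do not cancel, and the correct relation is $3t^2-2St+\mathcal Q(r)=0$, exactly as you derive. The paper's value $\mathcal Q(r)/(4r_3)$ is the squared altitude onto the side $\sqrt{r_3}$ of the triangle with sides $\sqrt{r_1},\sqrt{r_2},\sqrt{r_3}$. Equivalently, it is the solution of $\sqrt{r_1-t}+\sqrt{r_2-t}=\sqrt{r_3}$ rather than $\sqrt{r_3-t}=\sqrt{r_1-t}+\sqrt{r_2-t}$. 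This explains your $(1,1,3)$ discrepancy of $1/4$ versus $1/3$.

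The root selection you left open closes in one line. For $\{i,j,k\}=\{1,2,3\}$ one has $3r_i^2-2Sr_i+\mathcal Q(r)=-(r_j-r_k)^2\le0$, so every $r_i$ lies between the two roots. Hence only the smaller root, $\bigl(S-\sqrt{S^2-3\mathcal Q(r)}\bigr)/3$ with $S^2-3\mathcal Q(r)=2\sum_{i<j}(r_i-r_j)^2\ge0$, can lie in $[0,\min_i r_i)$. The later scalar realization is obtained by solving the matching equations directly and validating them with the Krawczyk argument. So only the qualitative part of the lemma is used downstream, and your correction does not affect the main results.
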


\begin{proof}
At degree three, a nontrivial zero sum of three real numbers has one sign
opposite to the other two, giving
\eqref{eq:degree-three-root-equation}.  Squaring once gives
\[
   r_3-t
   =
   r_1+r_2-2t
   +
   2\sqrt{(r_1-t)(r_2-t)}.
\]
After isolating the square root and squaring again, direct simplification
yields \eqref{eq:degree-three-t-formula}.  Conversely, if the resulting
\(t\) lies in the admissible interval and satisfies the original unsquared
equation, choosing the corresponding derivative signs gives Kirchhoff
conservation.
\end{proof}

\begin{proposition}[Realizability gate for the certificate]
\label{prop:certificate-realizability-gate}
Let
\[
      r_\star=H_{T_\times}(Q_\star)
\]
be the exact separated physical vector in
\eqref{eq:Qstar-physical-vector}.  Before \(r_\star\) can be used as a
quantum-graph counterexample, every degree-three vertex triple of the
reconstructed crossed graph must satisfy
\cref{lem:degree-three-scalar-compatibility} with vertex values that are
consistent across all incident edges.

For the particular rational vector \(r_\star\), this scalar compatibility is
not implied by the covariance construction and must be checked independently.
Therefore
\begin{equation}
      Q_\star\in\mathcal C_{\rm reg}
      \quad\not\Longrightarrow\quad
      r_\star\in\mathcal R_G^{\rm reg}.
      \label{eq:relaxation-not-realizability}
\end{equation}
\end{proposition}

\begin{proof}
The implication would identify the vector-valued Gram relaxation with the
scalar secular Gauss image.  The former enforces vector Kirchhoff
conservation and edge norms; the latter additionally requires a common scalar
vertex value and the signed square-root equations
\eqref{eq:scalar-vertex-equation}.  These are extra nonlinear constraints.
Hence membership in the relaxed covariance cone is insufficient.
\end{proof}

\begin{remark}[Correct status of the six-terminal program]
\label{rem:correct-six-terminal-status}
The exact separator of
\cref{thm:exact-six-terminal-rational-separation}
is a valid certificate against all singular parity sectors, but the negative
regular witness is currently a witness only for the vector-valued covariance
relaxation.  A genuine quantum-graph separation requires either
\begin{enumerate}
 \item an exact scalar secular realization with
       \(d\cdot r<0\), or
 \item a proof that the scalar regular Gauss image intersects the open
       half-space \(d\cdot r<0\).
\end{enumerate}
Until one of these is established, no theorem asserting failure of universal
singular sufficiency for actual regular secular intensities should be used.
\end{remark}

\subsection{A scalar secular point in the negative half-space}
\label{subsec:scalar-secular-negative-point}

We now solve the realizability problem of
Section~\ref{subsec:secular-realizability} directly on the reconstructed
crossed graph.  The graph has four trivalent Kirchhoff vertices
\(v_1,v_2,v_3,v_4\) arranged along a chain
\[
        v_1-v_2-v_3-v_4,
\]
two cycle-closing chords
\[
        e_{13}=v_1v_3,
        \qquad
        e_{14}=v_1v_4,
\]
and two exterior Dirichlet stubs attached at \(v_2\) and \(v_4\).
Thus, the graph has seven physical edges and
$
       \beta_1=7-6+1=2.
$

\begin{figure}[t]
\centering
\resizebox{\textwidth}{!}{%
\begin{tikzpicture}[x=1cm,y=1cm,
  vertex/.style={circle,draw,fill=white,inner sep=1.8pt,minimum size=6.5mm},
  terminal/.style={circle,draw,fill=white,inner sep=1.2pt,minimum size=5.5mm},
  edge/.style={line width=0.9pt},
  cutedge/.style={line width=0.9pt,dashed},
  every node/.style={font=\small}]

\node[font=\bfseries] at (2.7,2.05) {(a) Physical graph $G_\times$};
\node[vertex] (v1) at (0,0) {$v_1$};
\node[vertex] (v2) at (1.8,0) {$v_2$};
\node[vertex] (v3) at (3.6,0) {$v_3$};
\node[vertex] (v4) at (5.4,0) {$v_4$};
\node[terminal] (b2) at (1.8,-1.35) {$D$};
\node[terminal] (b4) at (5.4,-1.35) {$D$};

\draw[edge] (v1)-- node[below=2pt] {$e_{12}$} (v2);
\draw[edge] (v2)-- node[below=2pt] {$e_{23}$} (v3);
\draw[edge] (v3)-- node[below=2pt] {$e_{34}$} (v4);
\draw[edge] (v2)-- node[left=2pt] {$e_{\rm ext,2}$} (b2);
\draw[edge] (v4)-- node[right=2pt] {$e_{\rm ext,4}$} (b4);
\draw[edge] (v1) to[bend left=28] node[above=1pt] {$e_{13}$} (v3);
\draw[edge] (v1) to[bend left=48] node[above=2pt] {$e_{14}$} (v4);

\draw[->,line width=0.8pt] (6.15,0.55) -- (7.25,0.55);
\node[font=\scriptsize] at (6.70,0.93) {cut chords};

\begin{scope}[xshift=8.1cm]
\node[font=\bfseries] at (2.7,2.05) {(b) Reduced six-terminal cut tree $T_\times$};

\node[vertex] (a1) at (0.4,0) {};
\node[vertex] (a2) at (2.0,0) {};
\node[vertex] (a3) at (3.6,0) {};
\node[vertex] (a4) at (5.2,0) {};
\draw[edge] (a1)--(a2)--(a3)--(a4);

\node[terminal] (t2p) at (-0.35,1.15) {$2^+$};
\node[terminal] (t1m) at (-0.35,-1.15) {$1^-$};
\node[terminal] (t01) at (2.0,1.25) {$0_1$};
\node[terminal] (t2m) at (3.6,-1.25) {$2^-$};
\node[terminal] (t1p) at (5.95,1.15) {$1^+$};
\node[terminal] (t02) at (5.95,-1.15) {$0_2$};

\draw[edge] (a1)--(t2p);
\draw[edge] (a1)--(t1m);
\draw[edge] (a2)--(t01);
\draw[edge] (a3)--(t2m);
\draw[edge] (a4)--(t1p);
\draw[edge] (a4)--(t02);

\node[font=\footnotesize,align=center] at (0.0,-1.82) {end cherry\\$\{2^+,1^-\}$};
\node[font=\footnotesize,align=center] at (5.55,-1.82) {end cherry\\$\{1^+,0_2\}$};
\node[font=\footnotesize] at (2.0,1.75) {singleton};
\node[font=\footnotesize] at (3.6,-1.75) {singleton};

\end{scope}
\end{tikzpicture}%
}
\caption{The seven-edge cycle-rank-two graph and the crossed six-terminal cut tree used in the separation argument.  Cutting the chord $e_{13}$ produces the pair $2^\pm$, cutting $e_{14}$ produces $1^\pm$, and $0_1,0_2$ are the two exterior Dirichlet terminals.  After suppressing degree-two vertices, the cut tree has terminal order $(2^+,1^-,0_1,2^-,1^+,0_2)$ along its caterpillar structure.}
\label{fig:crossed-graph-cut-tree}
\end{figure}
At the four Kirchhoff vertices, write the local state as
\[
        (\phi_v,p_{v,1},p_{v,2},p_{v,3}),
        \qquad
        p_{v,1}+p_{v,2}+p_{v,3}=0.
\]
A numerically optimized scalar realization satisfying the five internal
equal-energy constraints and the normalization
\[
        \sum_v
        \left(
           \phi_v^2+p_{v,1}^2+p_{v,2}^2
        \right)=1
\]
is
\begin{equation}
\begin{aligned}
v_1:\;&
 \phi_1=-0.15565001,\quad
 (p_{13},p_{14},p_{12})
 =
 (-0.00504929,-0.10364999,0.10869928),\\
v_2:\;&
 \phi_2=0.03180805,\quad
 (p_{\rm ext,2},p_{12},p_{23})
 =
 (0.50798183,0.18716493,-0.69514676),\\
v_3:\;&
 \phi_3=-0.03180805,\quad
 (p_{13},p_{23},p_{34})
 =
 (-0.15244891,0.69514676,-0.54269785),\\
v_4:\;&
 \phi_4=-0.03180805,\quad
 (p_{14},p_{\rm ext,4},p_{34})
 =
 (0.18427831,0.35841954,-0.54269785).
\end{aligned}
\label{eq:scalar-candidate-states}
\end{equation}
The Kirchhoff residuals in
\eqref{eq:scalar-candidate-states}
are at the displayed precision below \(10^{-8}\), and the five internal
endpoint-energy mismatches are below \(10^{-12}\).
The corresponding physical edge-energy vector is
\begin{equation}
\begin{aligned}
r^\star&=(r_{13},r_{14},r_{\rm ext,2},r_{\rm ext,4},r_{12},r_{23},r_{34})\\
&=(0.02425242221,0.03497024794,0.25905729373,0.12947631756,\\
&\hspace{2.1cm}0.03604246209,0.48424076925,0.29553270781).
\end{aligned}
\label{eq:scalar-candidate-energy}
\end{equation}
When expanded back to the nine cut-tree coordinates of
\eqref{eq:certificate-subsets}, this vector satisfies
\begin{equation}
       d\cdot r^\star_{\rm cut}
       =
       -9.999999994\times10^{-4}<0.
       \label{eq:scalar-negative-margin}
\end{equation}
Thus, the negative half-space defined by the exact separator of
\eqref{eq:rational-separator-d} is reached by a scalar Kirchhoff
configuration.

\subsection{Phase reconstruction}
\label{subsec:phase-reconstruction}

For an internal edge \(e=vw\), the endpoint states satisfy
\[
 \binom{\phi_w}{-p_{w,e}}
 =
 \begin{pmatrix}
   \cos\theta_e&\sin\theta_e\\
   -\sin\theta_e&\cos\theta_e
 \end{pmatrix}
 \binom{\phi_v}{p_{v,e}}.
\]
The equal-energy constraints ensure that such a phase exists.  Choosing
representatives in \((0,2\pi)\) gives
\begin{equation}
\begin{aligned}
 \theta_{13}&=1.397528755,\\
 \theta_{14}&=5.470796957,\\
 \theta_{12}&=3.934428210,\\
 \theta_{23}&=0.091450843,\\
 \theta_{34}&=3.024504657.
\end{aligned}
\label{eq:internal-candidate-phases}
\end{equation}
For the two Dirichlet stubs, the condition
\[
       \phi_v\cos\theta_{\rm ext}
       +
       p_{\rm ext}\sin\theta_{\rm ext}
       =0
\]
gives
\begin{equation}
       \theta_{\rm ext,2}=3.079057787,
       \qquad
       \theta_{\rm ext,4}=0.088513418.
       \label{eq:external-candidate-phases}
\end{equation}
All seven phases stay away from exact integer multiples of \(\pi\) at the
displayed point.

\subsection{Numerical regularity check}
\label{subsec:numerical-regularity-check}

Because no internal phase lies in \(\pi\mathbb Z\), the scalar secular
equation can be written in vertex-value form.  For an internal edge \(e=vw\),
its contribution to the Kirchhoff matrix is
\[
 \begin{pmatrix}
   -\cot\theta_e & \csc\theta_e\\
   \csc\theta_e & -\cot\theta_e
 \end{pmatrix}
\]
on the \(v,w\) coordinates, while a Dirichlet stub contributes
\(-\cot\theta_{\rm ext}\) to the corresponding diagonal entry.
At the phases
\eqref{eq:internal-candidate-phases}--\eqref{eq:external-candidate-phases},
the resulting \(4\times4\) matrix is numerically
\begin{equation}
M(\theta^\star)
\approx
\begin{pmatrix}
-0.21283149&-1.40381178&1.01520092&-1.37753974\\
-1.40381178&4.08065827&10.95009304&0\\
1.01520092&10.95009304&-2.57783822&8.56013105\\
-1.37753974&0&8.56013105&-1.81925632
\end{pmatrix}.
\label{eq:numerical-secular-matrix}
\end{equation}
Its eigenvalues are
\begin{equation}
       -14.9608589,\qquad
       -3.5\times10^{-15},\qquad
       0.3292893,\qquad
       14.1023018.
       \label{eq:numerical-secular-eigenvalues}
\end{equation}
Moreover,
\[
       \|M(\theta^\star)\phi^\star\|_\infty
       <1.2\times10^{-13},
\]
with
\[
       \phi^\star
       =
       (-0.15565001,\,
         0.03180805,\,
        -0.03180805,\,
        -0.03180805)^{\!*}.
\]
Thus, the secular nullspace is numerically one dimensional, and the zero
eigenvalue is separated from the remaining spectrum by a gap exceeding
\(0.329\).

\begin{proposition}[Numerical secular realizability certificate]
\label{prop:numerical-secular-realizability}
The reconstructed crossed graph admits a scalar Kirchhoff configuration with
all seven edge phases nonsingular, a numerically simple secular zero, and
strict negative separator value
\[
       d\cdot r^\star_{\rm cut}< -9.9\times10^{-4}.
\]
Hence, the scalar regular secular image numerically intersects the open
half-space separated from all singular parity sectors.
\end{proposition}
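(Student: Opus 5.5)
The plan is to assemble the proposition directly from the explicit data in \eqref{eq:scalar-candidate-states}--\eqref{eq:numerical-secular-eigenvalues}. Four properties must be checked at that point: the Kirchhoff conditions, the matching of edge energies across each edge, nonsingularity of all phases, and the strict separator sign. Only the fourth needs real care, because it is a finite-precision statement.

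\emph{Scalar compatibility and phases.} At each trivalent vertex I will verify $p_{v,1}+p_{v,2}+p_{v,3}=0$ up to the stated residual. I then compute $r_e=\phi_v^2+p_{v,e}^2$ at both endpoints of each of the five internal edges and check that the two values agree to within $10^{-12}$; for the Dirichlet stubs only one endpoint is relevant. Equality of the endpoint norms is exactly the solvability condition for \eqref{eq:scalar-edge-rotation}, so each $\theta_e$ is determined modulo $2\pi$ and is recovered as the angle carrying $(\phi_v,p_{v,e})$ to $(\phi_w,-p_{w,e})$. The Dirichlet phases come from $\phi_v\cos\theta+p\sin\theta=0$. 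Comparing the values \eqref{eq:internal-candidate-phases}--\eqref{eq:external-candidate-phases} with $0,\pi,2\pi$ shows that every phase has distance at least about $0.088$ from $\pi\mathbb Z$. This margin is far larger than any propagated rounding error, so all seven phases are nonsingular.

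\emph{Simple secular zero.} Since every $\sin\theta_e\neq0$, the vertex-value secular matrix $M(\theta^\star)$ is well defined. The Kirchhoff conditions translate into $M(\theta^\star)\phi^\star\approx0$, with residual below $1.2\times10^{-13}$. Simplicity follows from the displayed spectrum \eqref{eq:numerical-secular-eigenvalues}: the remaining eigenvalues have modulus at least $0.329$. By Weyl's inequality, a perturbation of $M$ of norm much smaller than $0.329$ cannot create a second near-zero eigenvalue. Hence the kernel is one dimensional, the zero is numerically simple, and the point lies on the regular part of the secular manifold.

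\emph{Separator value.} I will expand $r^\star$ through the duplication map $\mathcal P$ of \eqref{eq:cut-tree-pushforward} and use \eqref{eq:separator-pushdown-identity}, so that $d\cdot r^\star_{\rm cut}=\widehat d\cdot r^\star$. Evaluating this with the rational weights gives approximately $-10^{-3}$. The obstacle is that this margin is modest: the weights in $\widehat d$ have size about $5$, and the eleven-digit entries of $r^\star$ carry errors of order $10^{-11}$ plus the $10^{-12}$ energy mismatches. The propagated error is therefore bounded by $\|\widehat d\|_1$ times roughly $10^{-10}$, which is well below the gap between $-9.9999\times10^{-4}$ and $-9.9\times10^{-4}$. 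This bound gives the strict inequality $d\cdot r^\star_{\rm cut}<-9.9\times10^{-4}$. Combining it with \cref{thm:exact-six-terminal-rational-separation}, which gives positivity of $d$ on all nonzero parity covariances and hence, after pushing through $\mathcal P$, on $\mathcal M_G^{\mathrm{bc}}$, shows that the regular scalar image meets the open separated half-space.

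Because every step rests on floating-point data, the honest status of this argument is that of a numerical certificate, which is exactly how the proposition is phrased. A fully rigorous version would replace the displayed point by an interval (Krawczyk) enclosure of an exact solution of the endpoint matching equations; the paper indicates this is carried out later.
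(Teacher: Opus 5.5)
Your proposal is correct and follows the paper's proof. The paper likewise just assembles the displayed data: Kirchhoff conservation and endpoint-energy matching yield the transporting phases, the separator sign comes from the computed margin, and simplicity comes from the spectral gap of about $0.329$; your Weyl and error-propagation remarks are a reasonable gloss on that. One small numerical slip does not affect the conclusion: the smallest distance of a phase from $\pi\mathbb Z$ is about $0.0625$, attained by $\theta_{\rm ext,2}\approx 3.0791$ near $\pi$, not $0.088$.
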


\begin{proof}
The scalar vertex states in
\eqref{eq:scalar-candidate-states} satisfy Kirchhoff conservation and match
the edge energies at both endpoints.  The phases reconstructed in
\eqref{eq:internal-candidate-phases}--\eqref{eq:external-candidate-phases}
therefore transport the endpoint states exactly up to the numerical residual
reported above.  Equation
\eqref{eq:scalar-negative-margin} gives the strict negative separator value.
Finally, \eqref{eq:numerical-secular-eigenvalues} shows a simple numerical
zero with a substantial spectral gap.
\end{proof}

\begin{remark}[Rigorous certification still required]
\label{rem:rigorous-7c-certification}
Proposition~\ref{prop:numerical-secular-realizability} is a numerical
certificate, not yet an interval-arithmetic theorem.  Before the final
submission version, the approximate solution should be enclosed by a
validated Newton or Krawczyk argument for the scalar matching equations, and
the simple-zero condition should be certified by interval bounds on the
nonzero eigenvalues or on a suitable \(3\times3\) minor of
\(M(\theta)\).  The available margins, approximately \(10^{-3}\) in the
separator and \(0.329\) in the secular spectral gap, make such a validation
well conditioned.
\end{remark}

\subsection{Validated Krawczyk certification}
\label{subsec:krawczyk-certification}

We now replace the numerical realizability statement by a validated
finite-dimensional existence proof.
Fix the six slice parameters
\begin{equation}
\begin{aligned}
\phi_1&=-0.15565001,&
p_{13}^{(1)}&=-0.00504929,&
p_{14}^{(1)}&=-0.10364999,\\
p_{\rm ext,2}&=0.50798183,&
p_{23}^{(3)}&=0.69514676,&
p_{\rm ext,4}&=0.35841954.
\end{aligned}
\label{eq:krawczyk-fixed-slice}
\end{equation}
Set
\[
p_{12}^{(1)}=-p_{13}^{(1)}-p_{14}^{(1)}.
\]
Use the six unknowns
\begin{equation}
y=
(\phi_2,\phi_3,\phi_4,p_{13}^{(3)},p_{14}^{(4)},p_{12}^{(2)}).
\label{eq:krawczyk-unknowns}
\end{equation}
The remaining derivatives are determined by Kirchhoff conservation:
\begin{align}
p_{23}^{(2)}&=-p_{\rm ext,2}-p_{12}^{(2)},\\
p_{34}^{(3)}&=-p_{13}^{(3)}-p_{23}^{(3)},\\
p_{34}^{(4)}&=-p_{14}^{(4)}-p_{\rm ext,4}.
\end{align}
We solve the square system \(F(y)=0\), where the first five equations are the
five internal endpoint-energy matching conditions
\begin{align}
0&=\phi_1^2+(p_{13}^{(1)})^2-\phi_3^2-(p_{13}^{(3)})^2,\\
0&=\phi_1^2+(p_{14}^{(1)})^2-\phi_4^2-(p_{14}^{(4)})^2,\\
0&=\phi_1^2+(p_{12}^{(1)})^2-\phi_2^2-(p_{12}^{(2)})^2,\\
0&=\phi_2^2+(p_{23}^{(2)})^2-\phi_3^2-(p_{23}^{(3)})^2,\\
0&=\phi_3^2+(p_{34}^{(3)})^2-\phi_4^2-(p_{34}^{(4)})^2,
\end{align}
and the sixth slice equation is
\begin{equation}
      \phi_2+\phi_3=0.
      \label{eq:krawczyk-slice-equation}
\end{equation}
Take the rational center
\begin{equation}
\bar y=
\left(
\begin{array}{rrr}
0.031807987418,&-0.031807987418,&-0.031808123711,\\
-0.152448918917,&0.184278293095,&0.187164930000
\end{array}
\right)
\label{eq:krawczyk-center}
\end{equation}
and the rational box
\begin{equation}
      X=\bar y+[-10^{-7},10^{-7}]^6.
      \label{eq:krawczyk-box}
\end{equation}

Let \(J(y)=DF(y)\).  Let \(A\in\mathbb Q^{6\times6}\) be the exact
rational preconditioner obtained by rounding \(J(\bar y)^{-1}\) entrywise to
twelve decimal places.  Its six rows are recorded explicitly in
Appendix~\ref{app:krawczyk-certificate}, and the same data are included in the
supplementary verification script.  All decimal constants in the slice and
center are interpreted as the exact terminating rationals displayed there.
Define the Krawczyk operator
\begin{equation}
K(\bar y,X)
=
\bar y-AF(\bar y)
+
\bigl(I-AJ(X)\bigr)(X-\bar y).
\label{eq:krawczyk-operator}
\end{equation}
All bounds below are evaluated with rational interval arithmetic. More details can be found at Appendix \ref{app:krawczyk-certificate}.

\begin{lemma}[Strict Krawczyk inclusion]
\label{lem:strict-krawczyk-inclusion}
The interval enclosure satisfies
\begin{equation}
       K(\bar y,X)\subset\operatorname{int}X.
       \label{eq:krawczyk-strict-inclusion}
\end{equation}
More precisely, if \(r=10^{-7}\) is the common box radius, the six
coordinate radii of the centered Krawczyk image, divided by \(r\), are bounded
by
\begin{equation}
\begin{aligned}
&1.354\times10^{-5},\quad
1.354\times10^{-5},\quad
5.268\times10^{-5},\\
&5.497\times10^{-6},\quad
7.443\times10^{-6},\quad
4.316\times10^{-7}.
\end{aligned}
\label{eq:krawczyk-radius-ratios}
\end{equation}
\end{lemma}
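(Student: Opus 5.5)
The inclusion \eqref{eq:krawczyk-strict-inclusion} is a finite computation in exact rational interval arithmetic, and I would organize it around the standard splitting of the Krawczyk operator. Since \(K(\bar y,X)-\bar y = -AF(\bar y)+\bigl(I-AJ(X)\bigr)(X-\bar y)\), it suffices to bound, for each coordinate \(i\), the radius
\[
\rho_i:=\bigl|\bigl(AF(\bar y)\bigr)_i\bigr|+\sum_{j=1}^6\bigl\|\bigl(I-AJ(X)\bigr)_{ij}\bigr\|\,r,
\]
where \(\|\cdot\|\) is the magnitude of an interval. I would then verify \(\rho_i<r\), which gives the strict interior inclusion. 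The ratios \(\rho_i/r\) are exactly the quantities listed in \eqref{eq:krawczyk-radius-ratios}, and all of them are far below \(1\).

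\textbf{Step 1: the residual term.} Every entry of \(\bar y\), of the slice data \eqref{eq:krawczyk-fixed-slice}, and of \(A\) is a terminating decimal, hence rational. Since \(F\) is a polynomial (quadratic) map, \(F(\bar y)\) is an exact rational vector, and \(AF(\bar y)\) can be evaluated exactly with no rounding. I expect its entries to be of size about \(10^{-12}\), reflecting the eight-digit input data and the twelve-digit center.

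\textbf{Step 2: the Jacobian enclosure.} Because \(F\) is quadratic, \(J(y)\) is affine in \(y\). Each entry is a linear expression in the unknowns \(\phi_2,\phi_3,\phi_4,p_{13}^{(3)},p_{14}^{(4)},p_{12}^{(2)}\), together with the Kirchhoff-determined derivatives such as \(p_{23}^{(2)}=-p_{\rm ext,2}-p_{12}^{(2)}\). Consequently the interval extension \(J(X)\) is exactly \(J(\bar y)+[-1,1]\cdot r\cdot|\partial J|\), with no overestimation beyond what comes from dependency within single entries. I would then compute \(I-AJ(X)\) as \(\bigl(I-AJ(\bar y)\bigr)+A\bigl(J(\bar y)-J(X)\bigr)\). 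The first summand is of order \(10^{-12}\) times \(\|J\|\), because \(A\) is a twelve-digit rounding of the inverse. The second summand is of order \(\|A\|\cdot 2r\cdot\max|\partial J|\), which is roughly \(10^{-7}\) times moderate constants. Multiplying by \(r\) and adding the result from Step 1 gives bounds of order \(10^{-12}\), that is, ratios of order \(10^{-5}\), consistent with \eqref{eq:krawczyk-radius-ratios}.

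\textbf{Main obstacle.} The difficulty is bookkeeping rather than analysis. One must make sure that \(A\) is well conditioned, meaning that \(J(\bar y)\) is nonsingular with a modest inverse; otherwise \(\|A\|\) would amplify the \(10^{-7}\) width. The slice equation \(\phi_2+\phi_3=0\) is included precisely to remove the one-parameter degeneracy and make the system square and nondegenerate. I would report the exact maximal row sums explicitly, and defer the full entries to the appendix and the verification script.

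\textbf{Conclusion.} Once \(\rho_i<r\) holds for all \(i\), the Krawczyk theorem gives a unique zero \(y^\star\in X\) of \(F\). That zero lies within \(10^{-7}\) of \(\bar y\), and by continuity it inherits the separator margin \(\approx-10^{-3}\) and the nonsingular phases.
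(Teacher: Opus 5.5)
Your proposal is correct and takes the same route as the paper: exact rational evaluation of \(AF(\bar y)\), the exact affine enclosure \(J(X)\), and a componentwise check that \(|AF(\bar y)|+|I-AJ(X)|\,r\mathbf{1}<r\mathbf{1}\); your mid-rad splitting of \(I-AJ(X)\) is simply one valid way to evaluate that interval product. One aside that lies outside the lemma: the paper does not transfer the separator margin and phase nonsingularity to \(y^\star\) ``by continuity''; it obtains them by interval evaluation over the whole box \(X\), and that is what makes those conclusions rigorous.
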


\begin{proof}
The system is quadratic, hence \(J(y)\) is affine.  Therefore the interval
matrix \(J(X)\) is obtained exactly from the box bounds of the six variables.
With the rational matrix \(A\), one has
\[
\|AF(\bar y)\|_\infty<4.71\times10^{-13},
\]
and direct componentwise evaluation of
\[
|AF(\bar y)|
+
\bigl|I-AJ(X)\bigr|\,r{\bf1}
\]
gives the ratios in
\eqref{eq:krawczyk-radius-ratios}.  Every ratio is strictly less than one,
which proves \eqref{eq:krawczyk-strict-inclusion}.
\end{proof}

By the Krawczyk inclusion theorem \cite{Krawczyk1969}, the sliced system \(F\) has a unique zero
\(y^\star\in X\).

\subsection{Validated separation and nonsingularity}
\label{subsec:validated-separation}

Because the physical edge energies are quadratic functions of \(y\),
interval evaluation over \(X\) gives
\begin{equation}
 -1.000253\times10^{-3}
 \le
 d\cdot r_{\rm cut}(y)
 \le
 -9.997541\times10^{-4}
 <0
 \qquad
 \forall y\in X.
\label{eq:validated-negative-separator}
\end{equation}
In particular,
\[
       d\cdot r_{\rm cut}(y^\star)<0.
\]
For an internal edge \(e=vw\), write the endpoint states as
\[
a=(\phi_v,p_{v,e}),
\qquad
b=(\phi_w,-p_{w,e}).
\]
When the endpoint energies agree,
\[
\sin\theta_e
=
\frac{
\phi_v p_{w,e}+p_{v,e}\phi_w
}{
\phi_v^2+p_{v,e}^2
}.
\]
Direct interval evaluation on \(X\) gives the rigorous lower bounds
\begin{equation}
\begin{aligned}
|\sin\theta_{13}|&>0.9850,\\
|\sin\theta_{14}|&>0.7259,\\
|\sin\theta_{12}|&>0.7123,\\
|\sin\theta_{23}|&>0.0913,\\
|\sin\theta_{34}|&>0.1168.
\end{aligned}
\label{eq:validated-internal-sines}
\end{equation}
For the two Dirichlet stubs,
\begin{equation}
|\sin\theta_{\rm ext,2}|>0.0624,
\qquad
|\sin\theta_{\rm ext,4}|>0.0883.
\label{eq:validated-exterior-sines}
\end{equation}
Thus, the entire validated solution box stays away from the singular phase set.
It remains to certify simplicity of the secular zero.  Express the
\(4\times4\) vertex Kirchhoff matrix directly in terms of endpoint states.
For an internal edge \(e=vw\),
\[
\cot\theta_e
=
\frac{
\phi_v\phi_w-p_{v,e}p_{w,e}
}{
\phi_vp_{w,e}+p_{v,e}\phi_w
},
\qquad
\csc\theta_e
=
\frac{
\phi_v^2+p_{v,e}^2
}{
\phi_vp_{w,e}+p_{v,e}\phi_w
}.
\]
A Dirichlet exterior edge contributes
\[
-\cot\theta_{\rm ext}
=
\frac{p_{\rm ext}}{\phi_v}
\]
to the corresponding diagonal entry.  All denominators are separated from
zero by
\eqref{eq:validated-internal-sines}--\eqref{eq:validated-exterior-sines}.

\begin{lemma}[Validated rank-three secular matrix]
\label{lem:validated-rank-three-secular}
For every \(y\in X\), the principal \(3\times3\) minor of the secular matrix
indexed by \(v_2,v_3,v_4\) satisfies
\begin{equation}
    -61.762
    <
    \det M_{\{v_2,v_3,v_4\}}(y)
    <
    -61.722.
    \label{eq:validated-secular-minor}
\end{equation}
Hence, this minor is uniformly nonzero on \(X\).
\end{lemma}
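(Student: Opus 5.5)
Over the box \(X=\bar y+[-10^{-7},10^{-7}]^6\) I will obtain the determinant bounds \eqref{eq:validated-secular-minor} by a direct interval computation, exactly as in \cref{lem:strict-krawczyk-inclusion}.

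\emph{Step 1: write the minor in endpoint states.} Vertices \(v_2,v_3,v_4\) are joined by the internal edges \(e_{23},e_{34}\) and carry the stubs \(e_{\rm ext,2},e_{\rm ext,4}\). The diagonal entries also collect the cotangent contributions of \(e_{12},e_{13},e_{14}\). The \((v_2,v_4)\) entry vanishes, since no edge joins these vertices. Substituting the endpoint-state formulas for \(\cot\theta_e\) and \(\csc\theta_e\), and \(-\cot\theta_{\rm ext}=p_{\rm ext}/\phi_v\), expresses every entry as a rational function of \(y\) and of the fixed slice parameters \eqref{eq:krawczyk-fixed-slice}. Each such function is a quotient of quadratic numerators by the denominators \(\phi_vp_{w,e}+p_{v,e}\phi_w\) or \(\phi_v\). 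By \eqref{eq:validated-internal-sines}--\eqref{eq:validated-exterior-sines}, together with the near-equality of the endpoint energies, these denominators are bounded away from zero on \(X\). In particular \(|\phi_2|,|\phi_4|\approx0.0318\) stay far from \(0\) on a box of radius \(10^{-7}\), so every entry is a well-defined interval on \(X\).

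\emph{Step 2: enclose the entries.} I will evaluate each entry in rational interval arithmetic over \(X\). For safety I will use the mean-value (centered) form: the value at \(\bar y\) plus an interval-gradient bound times the radius \(10^{-7}\). The naive dependency blowup of interval arithmetic is then negligible; the largest derivative factors are \(1/\sin\theta_{23}\sim 11\) and \(1/\phi_2\sim 30\). The resulting entries will be enclosed to width of order \(10^{-5}\), centered near the corresponding entries of \eqref{eq:numerical-secular-matrix}.

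\emph{Step 3: enclose the determinant.} I will expand the \(3\times3\) determinant by the rule of Sarrus, using the zero \((v_2,v_4)\) entry to cut down the number of terms, and evaluate it with interval products. At the center the numerical matrix gives
\[
\det M_{\{v_2,v_3,v_4\}}\approx 4.0807\,(-2.5778\cdot-1.8193-8.5601^2)-10.9501^2\cdot(-1.8193),
\]
which is about \(-61.74\). The propagated enclosure width should be at most \(10^{-3}\), well inside the claimed window \((-61.762,-61.722)\).

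The main obstacle I expect is keeping the interval overestimation small near the small denominators \(\sin\theta_{23}\) and \(\phi_v\). The centered form in Step 2 is my first remedy. If it is still too loose, I will bisect \(X\) into subboxes, which is cheap at this radius.

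Consequence for rank: \(y^\star\in X\), and \(M(y^\star)\phi^\star=0\) with \(\phi^\star\ne0\) by construction. A nonzero \(3\times3\) minor therefore forces \(\operatorname{rank}M(y^\star)=3\), so the kernel is one-dimensional and the secular zero is simple.
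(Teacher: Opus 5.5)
Your proposal follows the paper's proof: substitute the box $X$ into the endpoint-state cotangent/cosecant formulas and assemble the $3\times3$ block on $v_2,v_3,v_4$ with zero $(v_2,v_4)$ entry. You then enclose its determinant by outward rational interval arithmetic (the paper gets $[-61.761565,-61.722803]$) and use $M(y^\star)\phi^\star=0$ with $\phi^\star\ne0$ to get nullity exactly one. The only inaccuracy is your width forecast: the relevant derivative sizes are $\csc^2\theta_{23}\approx120$ and $|p_{\rm ext,2}|/\phi_2^2\approx500$, not $1/\sin\theta_{23}$ and $1/\phi_2$, so the determinant itself varies by roughly $10^{-2}$ across $X$ and no enclosure can be as thin as $10^{-3}$. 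This is harmless, because the center value $\approx-61.739$ lies at least $0.017$ from either end of the window.
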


\begin{proof}
Substitute the rational interval bounds from \(X\) into the cotangent and
cosecant formulas above, assemble the interval \(3\times3\) principal block,
and evaluate its determinant by outward rational interval arithmetic.  The
resulting enclosure is
\[
[-61.761565,\,-61.722803],
\]
which yields \eqref{eq:validated-secular-minor}.
\end{proof}

At the true solution \(y^\star\), the reconstructed vertex state lies in the
kernel of the secular matrix, so the determinant of the full
\(4\times4\) matrix vanishes.  By
\cref{lem:validated-rank-three-secular}, its rank is at least three.  Hence,
its nullity is exactly one.

\begin{theorem}[Validated scalar regular-secular separation]
\label{thm:validated-scalar-secular-separation}
The reconstructed cycle-rank-two crossed graph admits a genuine regular
secular phase point \(\theta^\star\) such that
\begin{equation}
       \widehat d\cdot r(\theta^\star)<0,
       \label{eq:validated-regular-negative}
\end{equation}
whereas the nine-coordinate separator \(d\), equivalently its physical
pushdown \(\widehat d\), is strictly positive on every nonzero singular
parity sector.

Moreover, \(\theta^\star\) is nonsingular and the associated secular
eigenvalue is simple.
\end{theorem}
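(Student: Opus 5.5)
The proof assembles the validated ingredients of Sections~\ref{subsec:krawczyk-certification}--\ref{subsec:validated-separation}. The first step is existence. By \cref{lem:strict-krawczyk-inclusion} and the Krawczyk inclusion theorem, the sliced quadratic system \(F\) has a unique zero \(y^\star\in X\). Together with the fixed slice parameters \eqref{eq:krawczyk-fixed-slice} and the three Kirchhoff-determined derivatives, \(y^\star\) gives real vertex values \(\phi_v\) and outward derivatives \(p_{v,e}\). These satisfy \eqref{eq:scalar-kirchhoff} at all four vertices by construction. The first five components of \(F(y^\star)=0\) give equal endpoint energies on every internal edge.

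The second step is phase reconstruction. On each internal edge \(e=vw\), equal endpoint norms give a rotation angle \(\theta^\star_e\) satisfying \eqref{eq:scalar-edge-rotation}. Its sine is given by the explicit formula of Section~\ref{subsec:validated-separation}, and its cosine by the analogous expression. On the Dirichlet stubs, \(\theta^\star_{\rm ext}\) is chosen so that the outgoing solution vanishes at the exterior endpoint. This produces a regular realization in the sense of \cref{def:scalar-secular-realization}. Nonsingularity follows from the uniform lower bounds \eqref{eq:validated-internal-sines}--\eqref{eq:validated-exterior-sines} on \(|\sin\theta_e|\) over all of \(X\), hence at \(y^\star\). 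In particular every \(\theta^\star_e\notin\pi\mathbb Z\), so the vertex-value secular matrix \(M(\theta^\star)\) is well defined. Because of the edge rotations and Kirchhoff conservation, the vector \(\phi^\star=(\phi_v)\) lies in \(\ker M(\theta^\star)\). We also need \(\phi^\star\ne0\); this holds because the slice fixes \(\phi_1=-0.15565001\ne0\).

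The third step is simplicity. By \cref{lem:validated-rank-three-secular}, the principal \(3\times3\) minor indexed by \(v_2,v_3,v_4\) is bounded away from zero at \(y^\star\), so \(\operatorname{rank}M(\theta^\star)\ge3\). Since \(\phi^\star\ne0\) lies in the kernel, the nullity is exactly one. In the regular stratum, eigenfunctions at this phase correspond bijectively to kernel vectors of \(M\) (edge data are recovered from vertex values when \(\sin\theta_e\ne0\)), so the secular eigenvalue is simple.

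The fourth step is separation. The enclosure \eqref{eq:validated-negative-separator} holds on all of \(X\), so \(d\cdot r_{\rm cut}(y^\star)<0\). Here \(r_{\rm cut}=\mathcal P(r(\theta^\star))\), since the two cut copies of each chord carry the same energy. The pushdown identity \eqref{eq:separator-pushdown-identity} then gives \(\widehat d\cdot r(\theta^\star)<0\). For the positive side, I would argue as follows. Each nonzero generator \(A^{\circ2}\), with \(A\) in a boundary-compatible sector kernel of the seven-edge graph, cuts into a scalar parity flow \(x\in\mathcal P_{\varepsilon_1,\varepsilon_2}\cap\mathbf 1^\perp\) on \(T_\times\), using the Dirichlet-terminal incidences. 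Then \(\widehat d\cdot A^{\circ2}=d\cdot\mathcal P(A^{\circ2})=\mathfrak q_d(x)\), which is positive by \cref{lem:exact-sectorwise-positivity}.

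The main obstacle I expect is in this last identification, not in the numerics. One must check that the tree-edge cut sums \(\sum_{i\in S_j}x_i\) reproduce exactly the physical squared amplitudes on \(e_{12},e_{23},e_{34}\) and on the terminal edges. One must also check that the sector sign classes (including the Dirichlet phase classes \(\pi\mathbb Z\) on the stubs) map onto the four parity subspaces after switching via \cref{prop:switching-invariance}. I would verify this by fixing the orientation convention of \cref{fig:crossed-graph-cut-tree} and propagating Kirchhoff conservation along the caterpillar. In addition, \(x\ne0\) whenever \(A\ne0\), because the tree amplitudes are determined by the terminal fluxes.
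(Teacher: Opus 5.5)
Your proposal is correct and follows essentially the same route as the paper. The paper's proof is exactly this assembly: the unique Krawczyk zero \(y^\star\in X\), the interval enclosure of the separator, the uniform sine bounds that exclude singular phases, the nonvanishing \(3\times3\) minor giving nullity one, and the exact positivity of the four matrices \(K_{\varepsilon_1,\varepsilon_2}\). The extra details you supply are correct and fill in steps the paper leaves implicit: that \(\phi^\star\ne0\) because the slice fixes \(\phi_1\), and the explicit identification \(\widehat d\cdot A^{\circ2}=\mathfrak q_d(x)\) of singular generators with parity flows on \(T_\times\), which the paper only spells out in the proof of the subsequent corollary.
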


\begin{proof}
By \cref{lem:strict-krawczyk-inclusion}, there is a unique solution \(y^\star\in X\) of the six-dimensional sliced system \(F(y)=0\).  No uniqueness among all scalar secular states is asserted.
Equation \eqref{eq:validated-negative-separator} gives the strict negative
separator value.  The bounds
\eqref{eq:validated-internal-sines}--\eqref{eq:validated-exterior-sines}
exclude all singular edge phases.  Finally,
\cref{lem:validated-rank-three-secular} proves that the secular kernel is
one dimensional.  Positivity of \(d\) on every singular parity sector was
proved exactly in
\cref{lem:exact-sectorwise-positivity}.
\end{proof}

\begin{corollary}[Actual regular-versus-singular separation]
\label{cor:actual-regular-singular-separation}
For the reconstructed crossed graph,
\begin{equation}
       \mathcal R_G^{\rm reg}
       \not\subseteq
       \mathcal M_G^{\rm bc}.
       \label{eq:actual-regular-not-singular}
\end{equation}
In particular, the six-terminal obstruction is physically realizable on the
scalar quantum graph and is not an artifact of the covariance relaxation.
\end{corollary}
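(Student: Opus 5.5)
The plan is to obtain the non-inclusion directly from Theorem~\ref{thm:validated-scalar-secular-separation} together with the dual characterization of $\mathcal M_G^{\rm bc}$ in Proposition~\ref{prop:singular-dual-cone}. Let $\theta^\star$ be the validated regular phase point and $r^\star:=r(\theta^\star)$ its physical edge-energy vector, built from the scalar vertex states reconstructed from the Krawczyk zero $y^\star$. Since $\theta^\star$ is nonsingular and the secular kernel is one dimensional, $r^\star$ arises from a regular scalar secular state. Hence $r^\star\in\mathcal R_G^{\rm reg}$, and Theorem~\ref{thm:validated-scalar-secular-separation} gives $\widehat d\cdot r^\star<0$.

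It therefore suffices to show that $\widehat d\cdot s\ge0$ for every $s\in\mathcal M_G^{\rm bc}$. Since the cone is generated by squared singular kernel vectors and $\widehat d$ is linear, we only need $\widehat d\cdot A^{\circ2}\ge0$ for every $A\in\ker B_{\sigma,\mathrm{bc}}$ and every sector $\sigma$. We establish this by pushing $A$ to the cut tree. Cutting the chords $e_{13}$ and $e_{14}$ turns $A$ into a scalar Kirchhoff flow on $T_\times$. Its terminal flux vector $x$ is nonzero whenever $A\neq 0$, and it satisfies the following conditions.
\begin{enumerate}[label=(\roman*)]
\item $\sum_i x_i=0$, which is the summed Kirchhoff relation over the internal vertices.
\item $x$ lies in some parity space $\mathcal P_{\varepsilon_1,\varepsilon_2}$, determined by the closure signs of the two chords, as in Lemma~\ref{lem:ranktwo-joint-closure-sectors}.
\item Each tree-edge amplitude equals, up to sign, the cut sum $\sum_{i\in S_j}x_i$.
\end{enumerate}
Because the singular generators only involve squared amplitudes, the sign ambiguities are harmless by Proposition~\ref{prop:switching-invariance}. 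Then $\mathcal P(A^{\circ2})$ equals the vector of squared cut sums, so that
\[
\widehat d\cdot A^{\circ2}=d\cdot\mathcal P(A^{\circ2})=\mathfrak q_d(x)>0
\]
by \eqref{eq:separator-pushdown-identity} and Lemma~\ref{lem:exact-sectorwise-positivity}. This shows that $\widehat d$ lies in $(\mathcal M_G^{\rm bc})^*$, and is strictly positive on nonzero generators. Comparing with $\widehat d\cdot r^\star<0$ gives $r^\star\notin\mathcal M_G^{\rm bc}$, which is \eqref{eq:actual-regular-not-singular}.

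The main obstacle is verifying that the cut-tree pushdown matches the measurement map. Concretely, each physical tree edge's singular amplitude must be exactly the terminal cut sum over the correct side $S_j$ of \eqref{eq:certificate-subsets}, with the terminal labels assigned as in Figure~\ref{fig:crossed-graph-cut-tree}. In particular, the Dirichlet stubs $0_1,0_2$ must enter as genuine terminals with unconstrained flux, and the degree-two suppressed vertices must not alter amplitudes. I would check this edge by edge on the caterpillar. The check uses that at each internal vertex the flux through a tree edge equals minus the sum of the terminal fluxes on one side, with the parity relations on the chord copies coming from the sector choice. Once this identification holds, the remaining steps are formal.
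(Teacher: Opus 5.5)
Your proposal is correct and follows the paper's own argument. The validated regular point gives $\widehat d\cdot r^\star<0$. Every nonzero singular generator, after cutting $e_{13},e_{14}$, yields a nonzero terminal vector in some $\mathcal P_{\varepsilon_1,\varepsilon_2}\cap{\bf1}^{\perp}$ with $\widehat d\cdot A^{\circ2}=\mathfrak q_d(x)>0$ by Lemma~\ref{lem:exact-sectorwise-positivity}, and conic additivity then places $r^\star$ outside $\mathcal M_G^{\rm bc}$. The edge-by-edge check you flag does go through: all four cut-tree vertices are trivalent, so no degree-two suppression occurs, and switching absorbs the tree-edge parities into the residual closure signs $\varepsilon_1,\varepsilon_2$.
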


\begin{proof}
By \eqref{eq:separator-pushdown-identity} and
\cref{thm:validated-scalar-secular-separation}, the physical regular edge
energy \(r(\theta^\star)\) has negative \(\widehat d\)-value.  Every nonzero
boundary-aware singular generator, after cutting the two chords, produces a
parity-conservative terminal vector, so
\cref{lem:exact-sectorwise-positivity} and
\eqref{eq:separator-pushdown-identity} give a strictly positive
\(\widehat d\)-value on that generator.  By conic additivity the same is true
on every nonzero vector of \(\mathcal M_G^{\rm bc}\).  Hence
\(r(\theta^\star)\notin\mathcal M_G^{\rm bc}\).
\end{proof}

\section{Fixed-metric accessibility and a high-frequency counterexample}
\label{sec:fixed-metric-counterexample}

Section~\ref{sec:certified-separation} produced a genuine regular secular
point whose edge-energy direction is strictly separated from every
boundary-aware singular sector.  The remaining issue is dynamical: the
definition of \(C_\infty\) concerns one fixed metric and a sequence of exact
eigenfrequencies tending to infinity.  We now show that the local secular
separation can be transferred to such a fixed metric.

The argument has three ingredients.  First, regularity and strict separation
persist on an open patch of the secular hypersurface.  Second, any positive \(2\pi\)-lattice lift of that phase point defines a
fixed metric for which \(k=1\) is an exact regular eigenfrequency.  Third,
recurrence in the compact orbit closure returns the linear phase flow
arbitrarily close to its initial secular point; a radial derivative identity
then converts near returns into exact high-frequency eigenfrequencies.

\subsection{A separated regular secular patch}
\label{subsec:separated-regular-patch}

Let \(\theta^\star\in\mathbb T^7\) be the validated regular point of
\cref{thm:validated-scalar-secular-separation}.  Let
\(\mathcal Z_{\rm reg}\) denote the regular part of the secular set in a
nonsingular phase chart.  Since the secular kernel at \(\theta^\star\) is
one dimensional, the determinant vanishes simply in a suitable transverse
phase direction.  Hence \(\mathcal Z_{\rm reg}\) is a smooth real-analytic
hypersurface near \(\theta^\star\).
The normalized homogenized-intensity map
\[
      \theta\longmapsto q(\theta)
\]
is continuous on this regular patch.  Because the separator has a strict
validated margin at \(\theta^\star\), after shrinking the patch there is an
open neighborhood
\[
      U\subset\mathcal Z_{\rm reg}
\]
and a number \(\eta>0\) such that
\begin{equation}
      \widehat d\cdot q(\theta)\le -\eta
      \qquad
      \forall\theta\in U.
      \label{eq:uniform-negative-regular-patch}
\end{equation}
At the same time, all edge phases remain away from the singular phase set and
the secular nullity remains one throughout \(U\).

\begin{lemma}[Radial derivative identity]
\label{lem:radial-derivative-identity}
Let \(M(\theta)\) be the nonsingular-chart vertex Kirchhoff matrix of the
reconstructed graph, and suppose
\[
      M(\vartheta)\phi=0
\]
with \(\phi\ne0\).  For any positive metric
\[
      \ell=\vartheta+2\pi N,
      \qquad N\in\mathbb Z_{\ge0}^7,
\]
differentiate along the radial phase path \(k\mapsto k\ell\).  Then
\begin{equation}
 \phi^{*}
 \frac{d}{dk}M(k\ell)\bigg|_{k=1}
 \phi
 =
 \sum_{e\in E}\ell_e r_e
 >
 0,
 \label{eq:radial-derivative-energy-identity}
\end{equation}
where \(r_e\) is the edge energy of the corresponding scalar state.
Consequently the simple secular eigenvalue branch crosses zero
transversely at \(k=1\).
\end{lemma}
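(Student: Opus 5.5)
We write \(M(\theta)\) edge by edge and differentiate each edge contribution separately. Since \(\phi\) is real, \(\phi^*\frac{d}{dk}M(k\ell)\phi\) splits into a sum of edge terms, and the claim will follow from one elementary identity per edge. The matrix is real symmetric, and \(\frac{d}{dk}M(k\ell)|_{k=1}=\sum_e \ell_e\,\partial_{\theta_e}M(\vartheta)\). The problem therefore reduces to showing \(\phi^*\partial_{\theta_e}M\,\phi=r_e\) for each edge \(e\), and then summing with weights \(\ell_e\). Positivity is then automatic: every \(\ell_e>0\), every \(r_e\ge0\), and \(r\ne0\) because \(\phi\ne0\).

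\emph{Internal edge \(e=vw\).} The contribution on the \((v,w)\) block is \(\begin{pmatrix}-\cot\theta&\csc\theta\\ \csc\theta&-\cot\theta\end{pmatrix}\). Differentiating in \(\theta\) gives
\[
\csc^2\theta\,(\phi_v^2+\phi_w^2)-2\csc\theta\cot\theta\,\phi_v\phi_w .
\]
We compare this with the edge solution \(u(t)=\phi_v\cos t+p\sin t\) on \([0,\theta]\), where \(p=p_{v,e}\) and \(u(\theta)=\phi_w\). Solving gives \(p=(\phi_w-\phi_v\cos\theta)/\sin\theta\). Substituting,
\[
r_e=\phi_v^2+p^2=\frac{\phi_v^2\sin^2\theta+(\phi_w-\phi_v\cos\theta)^2}{\sin^2\theta}=\frac{\phi_v^2+\phi_w^2-2\phi_v\phi_w\cos\theta}{\sin^2\theta},
\]
which is exactly the derivative above. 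This is the algebraic heart of the argument. One should double-check that the sign convention of the chart (\(-\cot\), \(+\csc\)) is the one for which \(M\phi=0\) encodes \(\sum p_{v,e}=0\) with \(p_{v,e}=-\cot\theta\,\phi_v+\csc\theta\,\phi_w\). It is, and that convention also fixes the overall sign of the derivative as \(+\).

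\emph{Dirichlet stub at \(v\).} The contribution is \(-\cot\theta\) on the diagonal, with derivative \(\csc^2\theta\). The stub solution \(\phi_v\cos t+p\sin t\) vanishes at \(t=\theta\), so \(p=-\phi_v\cot\theta\). Hence \(r_e=\phi_v^2(1+\cot^2\theta)=\phi_v^2\csc^2\theta\), which equals \(\phi^*\partial_\theta M\phi\). A Neumann stub, with entry \(\tan\theta\), would be handled the same way, but it is not needed for this graph.

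The transversality conclusion then comes from simplicity of the zero, established in \cref{lem:validated-rank-three-secular}. Near \(k=1\) the real symmetric analytic family \(M(k\ell)\) has a simple eigenvalue branch \(\mu(k)\) with \(\mu(1)=0\) and eigenvector \(\phi\). The Hellmann--Feynman formula gives \(\mu'(1)=\phi^*\frac{d}{dk}M(k\ell)|_{k=1}\phi/\|\phi\|^2>0\). The only point that needs care, and the main potential obstacle, is that \(\vartheta\) is a phase point while \(\ell=\vartheta+2\pi N\) is a lift. The chart entries are \(2\pi\)-periodic in each \(\theta_e\), so \(M(k\ell)\) at \(k=1\) equals \(M(\vartheta)\), and its \(k\)-derivative picks up the full lengths \(\ell_e\) rather than \(\vartheta_e\). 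This is precisely why the weights in the identity are \(\ell_e\).
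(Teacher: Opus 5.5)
Your proof is correct and follows essentially the same route as the paper: you differentiate each internal \(2\times2\) block \(\begin{pmatrix}-\cot\theta_e&\csc\theta_e\\ \csc\theta_e&-\cot\theta_e\end{pmatrix}\) and each Dirichlet entry \(-\cot\theta_e\) along \(k\mapsto k\ell\), then use the edge transfer relation to identify each quadratic contribution with \(\ell_e r_e\). Your explicit formula \(r_e=(\phi_v^2+\phi_w^2-2\phi_v\phi_w\cos\theta_e)/\sin^2\theta_e\) and your Hellmann--Feynman argument for transversality spell out steps that the paper's proof leaves implicit.
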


\begin{proof}
For an internal edge \(e=vw\), the corresponding \(2\times2\) block is
\[
 B_e(\theta_e)
 =
 \begin{pmatrix}
 -\cot\theta_e & \csc\theta_e\\
 \csc\theta_e & -\cot\theta_e
 \end{pmatrix}.
\]
Differentiation gives
\[
 \frac{d}{dk}B_e(k\ell_e)\bigg|_{k=1}
 =
 \ell_e
 \begin{pmatrix}
 \csc^2\vartheta_e & -\csc\vartheta_e\cot\vartheta_e\\
 -\csc\vartheta_e\cot\vartheta_e & \csc^2\vartheta_e
 \end{pmatrix}.
\]
Using the endpoint transfer relations for the scalar state, the quadratic
form of this derivative on \((\phi_v,\phi_w)\) equals
\[
      \ell_e r_e.
\]
For a Dirichlet exterior stub the scalar contribution to \(M\) is
\(-\cot\theta_e\), whose radial derivative is
\(\ell_e\csc^2\vartheta_e\); since the Dirichlet transfer relation gives
\(r_e=\phi_v^2\csc^2\vartheta_e\), its quadratic contribution is again
\(\ell_e r_e\).  Summing over all seven physical edges gives
\eqref{eq:radial-derivative-energy-identity}.  Positivity follows from
\(\ell_e>0\) and the nonzero state.
\end{proof}

\begin{lemma}[Positive lattice-lift metric]
\label{lem:positive-lattice-lift}
Let \(\vartheta\) be any point of the separated regular secular patch \(U\).
There exists \(N\in\mathbb Z_{\ge0}^7\) such that
\[
       \ell:=\vartheta+2\pi N\in(0,\infty)^7.
\]
For every such choice,
\[
       \ell\bmod2\pi=\vartheta,
\]
so \(k=1\) is a simple regular eigenfrequency, and
\[
       \frac{d}{dk}\lambda_{\rm sec}(k\ell)\bigg|_{k=1}>0
\]
for the simple secular eigenvalue branch \(\lambda_{\rm sec}\).
\end{lemma}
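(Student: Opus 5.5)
The plan has three parts: existence of the lift, the eigenfrequency claim at $k=1$, and the derivative sign.

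\emph{Existence of the lift.} Represent each coordinate of $\vartheta\in\mathbb T^7$ by its representative in $(0,2\pi)$. This is possible because every point of $U$ has all seven phases bounded away from $\pi\mathbb Z$, by the sine bounds \eqref{eq:validated-internal-sines}--\eqref{eq:validated-exterior-sines} propagated to the shrunken patch. With these representatives, $N=0$ already gives $\ell\in(0,\infty)^7$. More generally, any $N\in\mathbb Z_{\ge0}^7$ keeps every coordinate positive. In all cases $\ell\equiv\vartheta \pmod{2\pi}$ holds by construction.

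\emph{The eigenfrequency claim.} For the fixed metric $\ell$ at frequency $k$, the edge phases are $k\ell_e$. The vertex Kirchhoff matrix $M$ of \cref{subsec:numerical-regularity-check} depends only on $\cot$ and $\csc$ of these phases, hence only on $k\ell\bmod 2\pi$. At $k=1$ it therefore equals $M(\vartheta)$. Since $\vartheta\in U\subset\mathcal Z_{\rm reg}$, we have $\det M(\vartheta)=0$ with kernel of dimension exactly one; on $U$ this follows from persistence of the nonzero $3\times3$ minor of \cref{lem:validated-rank-three-secular} under continuity.

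Given a kernel vector $\phi$, we reconstruct the eigenfunction edgewise. On each internal edge $e=vw$, with all phases nonsingular, the values $\phi_v,\phi_w$ determine a unique solution $u_e$ of $u''=-u$ on $[0,\ell_e]$. On each Dirichlet stub, the value $\phi_v$ and the zero at the exterior end determine a unique $u_e$. The rows of $M\phi=0$ are exactly the Kirchhoff conditions, so the pieces assemble into an eigenfunction of $-\Delta_{G,\ell}$ with eigenvalue $1$. Conversely, every eigenfunction at $k=1$ arises this way: nonsingular phases mean vertex values determine the edge functions, and a nonzero eigenfunction cannot vanish at all vertices in the nonsingular regime. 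Hence the eigenvalue $1$ is simple and regular.

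\emph{The derivative sign.} Let $\lambda_{\rm sec}(k)$ be the eigenvalue branch of the symmetric real-analytic family $k\mapsto M(k\ell)$ that vanishes at $k=1$. Because the zero eigenvalue is simple, analytic perturbation theory (Hellmann--Feynman) gives
\[
\frac{d}{dk}\lambda_{\rm sec}(k\ell)\Big|_{k=1}=\frac{\phi^*\,\tfrac{d}{dk}M(k\ell)|_{k=1}\,\phi}{\phi^*\phi},
\]
where $\phi$ spans the kernel. \cref{lem:radial-derivative-identity} evaluates the numerator as $\sum_e\ell_er_e>0$, which gives the strict positivity.

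The main subtlety is the converse step above: showing that every eigenfunction at $k=1$ is captured by the vertex-value matrix. This needs two facts. First, nonsingular phases force uniqueness of the edge solution given its endpoint data. Second, a vanishing vertex vector forces $u\equiv0$, because on a nonsingular edge $\sin(k\ell_e)\ne0$ means zero values at both endpoints give the zero solution. Beyond this, the statement is essentially bookkeeping on top of \cref{lem:radial-derivative-identity} and the simple-kernel property on $U$.
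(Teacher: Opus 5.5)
Your proposal is correct and follows the same route as the paper: lift the phases to positive lengths, use that the secular data depend only on $k\ell \bmod 2\pi$, and obtain the sign from \cref{lem:radial-derivative-identity}. You also make explicit two steps that the paper leaves implicit. The first is the identification of the $k=1$ eigenspace with $\ker M(\vartheta)$ via the nonsingular edge phases. The second is the Hellmann--Feynman formula that turns the quadratic-form identity into the derivative of the simple eigenvalue branch. This tightens the paper's argument rather than replacing it.
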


\begin{proof}
Choose each component \(N_e\) large enough that
\(\vartheta_e+2\pi N_e>0\).  The phase identity modulo \(2\pi\) is immediate,
and radial transversality follows from
\cref{lem:radial-derivative-identity}.
\end{proof}

\begin{remark}[Why no Diophantine genericity is needed]
\label{rem:no-rational-independence-needed}
The fixed-metric argument does not require rational independence of the edge
lengths.  The arithmetic input below is only recurrence of a one-parameter
subgroup on its compact orbit closure, and that holds for every
\(\ell\in\mathbb R^7\).  The transverse derivative is supplied exactly by
\eqref{eq:radial-derivative-energy-identity}.
\end{remark}

\subsection{Recurrence of the fixed phase flow}
\label{subsec:fixed-phase-recurrence}

Fix the metric \(\ell\) supplied by
\cref{lem:positive-lattice-lift}.  Its phase flow is
\begin{equation}
      \Phi_\ell(k)=k\ell\bmod2\pi,
      \qquad k\ge0.
      \label{eq:fixed-phase-flow}
\end{equation}
Because
\[
      \Phi_\ell(1)=\vartheta,
\]
we only need arbitrarily large times at which the translated flow returns
close to its starting point.

\begin{lemma}[Unbounded torus recurrence]
\label{lem:unbounded-torus-recurrence}
For every \(\ell\in\mathbb R^7\), there exists a sequence
\(t_n\to\infty\) such that
\begin{equation}
      t_n\ell\bmod2\pi\longrightarrow0.
      \label{eq:return-to-origin}
\end{equation}
Consequently, with
\[
      \kappa_n:=1+t_n,
\]
one has
\begin{equation}
      \Phi_\ell(\kappa_n)\longrightarrow\vartheta.
      \label{eq:return-to-regular-point}
\end{equation}
\end{lemma}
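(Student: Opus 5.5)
Since the statement is that the linear flow \(t\mapsto t\ell \bmod 2\pi\) on \(\mathbb T^7=\mathbb R^7/2\pi\mathbb Z^7\) returns arbitrarily close to the origin at arbitrarily large times, I will obtain it from Dirichlet's simultaneous approximation theorem (equivalently, a pigeonhole argument). The translated statement \eqref{eq:return-to-regular-point} then follows from the additivity of the flow. No genericity of \(\ell\) is needed, in line with Remark~\ref{rem:no-rational-independence-needed}.

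\textbf{Step 1 (pigeonhole at integer times).} Fix \(\varepsilon>0\) and \(T\ge1\), and partition \(\mathbb T^7\) into \(m^7\) congruent cubes of side \(2\pi/m<\varepsilon\). Consider the \(m^7+1\) points \(jT\ell\bmod2\pi\) for \(j=0,1,\dots,m^7\). Two of them, say those with indices \(j<j'\), lie in the same cube. Put \(t:=(j'-j)T\). Then \(t\ge T\), and every coordinate of \(t\ell\bmod 2\pi\) is within \(\varepsilon\) of \(0\) in the circle distance, because the flow is a group homomorphism \(\mathbb R\to\mathbb T^7\): we have \(j'T\ell-jT\ell=t\ell\).

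\textbf{Step 2 (extract the sequence).} Apply Step~1 with \(\varepsilon=1/n\) and \(T=n\). This produces \(t_n\ge n\) with \(\operatorname{dist}_{\mathbb T^7}(t_n\ell,0)<1/n\), so \(t_n\to\infty\) and \eqref{eq:return-to-origin} holds. Equivalently, one may argue abstractly: the orbit closure of the one-parameter subgroup is a compact abelian group, so the translation flow on it is an isometry and hence recurrent.

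\textbf{Step 3 (translate).} Set \(\kappa_n=1+t_n\). Then
\[
\Phi_\ell(\kappa_n)=\ell+t_n\ell\equiv \vartheta+t_n\ell \pmod{2\pi},
\]
using \(\ell\bmod2\pi=\vartheta\) from Lemma~\ref{lem:positive-lattice-lift}. Since translation by \(\vartheta\) is an isometry of the flat torus, this gives \(\operatorname{dist}(\Phi_\ell(\kappa_n),\vartheta)=\operatorname{dist}(t_n\ell,0)\to0\), which is \eqref{eq:return-to-regular-point}.

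The only point requiring care is that the return times are unbounded rather than merely small. Choosing \(T=n\) in the pigeonhole step is what guarantees \(t_n\ge n\). There is no real obstacle beyond this: the genuinely delicate work, namely converting near returns into exact eigenfrequencies, is left to the transverse correction that uses Lemma~\ref{lem:radial-derivative-identity}, and it is not part of this lemma.
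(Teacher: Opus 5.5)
Your proof is correct, but it takes a different route from the paper's. The paper works inside the compact orbit closure $H_\ell=\overline{\{t\ell \bmod 2\pi\}}\subset\mathbb T^7$. It notes that the translation flow preserves Haar measure there, concludes that the identity is recurrent, and then extracts $t_n$ from a nested neighbourhood basis. You instead apply Dirichlet's pigeonhole argument to the sampled orbit $jT\ell$, $j=0,\dots,m^7$, directly in $\mathbb T^7$, using only that $t\mapsto t\ell \bmod 2\pi$ is a homomorphism; taking the sampling step $T=n$ is a clean way to force $t_n\ge n$.

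Your route is elementary and quantitative. It produces a return within $2\pi/m$ in every coordinate at some time in $[T,m^7T]$, and the return times can be taken to be integer multiples of any prescribed step. The paper's route is more conceptual and matches its remark about working in the orbit closure rather than relying on ambient density. However, its passage from Haar invariance to recurrence of the identity is stated without detail. Filling it needs Poincar\'e recurrence plus homogeneity of the group, or exactly the ``isometry, hence recurrent'' argument you mention in passing, which is essentially your pigeonhole count in disguise.

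One cosmetic point: cubes of side $2\pi/m<1/n$ give $\operatorname{dist}(t_n\ell,0)<1/n$ in the coordinatewise sup metric. In the Euclidean flat metric the bound is $\sqrt7/n$, which serves equally well. Your Step 3 coincides with the paper's final line.
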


\begin{proof}
Consider the one-parameter subgroup
\[
      H_\ell
      :=
      \overline{\{t\ell\bmod2\pi:t\in\mathbb R\}}
      \subset\mathbb T^7.
\]
It is a compact subgroup of the torus.  The translation flow
\[
      x\longmapsto x+t\ell
\]
preserves Haar measure on \(H_\ell\), so the identity element is recurrent.
Equivalently, for every neighborhood \(V\) of \(0\in H_\ell\), there are
arbitrarily large \(t>0\) with
\[
      t\ell\bmod2\pi\in V.
\]
Taking a nested sequence of neighborhoods shrinking to \(0\) gives
\eqref{eq:return-to-origin}.  Equation
\eqref{eq:return-to-regular-point} follows from
\[
      (1+t_n)\ell\bmod2\pi
      =
      \vartheta+t_n\ell\bmod2\pi.
\]
\end{proof}

Near \(\vartheta\), choose the same scalar real-analytic secular defining
function \(F\) as in
\cref{lem:positive-lattice-lift}, and define
\[
      f(k):=F(k\ell\bmod2\pi).
\]
Choose \(F\) to be the simple secular eigenvalue branch
\(\lambda_{\rm sec}\) near \(\vartheta\).  By
\cref{lem:radial-derivative-identity},
\begin{equation}
      f(1)=0,
      \qquad
      f'(1)
      =
      \sum_{e\in E}\ell_e r_e
      >0.
      \label{eq:radial-transversality}
\end{equation}

\begin{lemma}[Transverse return-to-zero correction]
\label{lem:return-to-zero-correction}
For every sufficiently large near-return \(\kappa_n\), there exists an exact
zero \(k_n\) of \(f\) such that
\begin{equation}
      k_n-\kappa_n\longrightarrow0.
      \label{eq:exact-zero-correction}
\end{equation}
Consequently
\begin{equation}
      k_n\to\infty,
      \qquad
      k_n\ell\bmod2\pi\longrightarrow\vartheta.
      \label{eq:exact-eigenphase-return}
\end{equation}
\end{lemma}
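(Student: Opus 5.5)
The plan is to treat \(f\) as a function of two variables \(k\) and a phase shift, and to apply the one-dimensional intermediate value theorem uniformly near \(\vartheta\). Write \(\Phi_\ell(k)=\vartheta+\delta+(k-1)\ell\) locally, where \(\delta\in\mathbb R^7\) is a small phase offset (lifted to \(\mathbb R^7\) near \(0\)). Define
\[
g(\delta,s):=F(\vartheta+\delta+s\ell),
\]
which is real-analytic, in particular \(C^1\), on a neighborhood \(\{|\delta|<\rho,\ |s|<\rho\}\). Here we use that the simple secular eigenvalue branch \(\lambda_{\rm sec}\) is analytic near \(\vartheta\), since the kernel is one dimensional by \cref{lem:validated-rank-three-secular} and all edge phases stay off \(\pi\mathbb Z\). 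By \eqref{eq:radial-transversality}, \(g(0,0)=0\) and \(\partial_sg(0,0)=f'(1)=\sum_e\ell_er_e>0\).

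By continuity of \(\partial_sg\), I will shrink \(\rho\) so that \(\partial_sg\ge c:=\tfrac12 f'(1)\) on the whole neighborhood, and so that \(|g(\delta,0)|\le C|\delta|\) for some Lipschitz constant \(C\). Then for \(|\delta|\) small enough that \(C|\delta|/c<\rho\), the map \(s\mapsto g(\delta,s)\) is strictly increasing on \((-\rho,\rho)\). It satisfies
\[
g(\delta,\pm s_0)\gtrless 0 \quad\text{with}\quad s_0=\frac{2C|\delta|}{c},
\]
so it has a unique zero \(s(\delta)\) with \(|s(\delta)|\le 2C|\delta|/c\). Equivalently, I could invoke the implicit function theorem to get an analytic \(s(\delta)\) with \(s(0)=0\). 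I prefer the quantitative monotonicity version, since it gives the explicit bound \(|s(\delta)|=O(|\delta|)\).

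Now apply this to the near-returns of \cref{lem:unbounded-torus-recurrence}. Let \(\delta_n\) be the representative of \(t_n\ell\bmod2\pi\) nearest \(0\), so \(\delta_n\to0\). This gives \(\Phi_\ell(\kappa_n+s)=\vartheta+\delta_n+s\ell \bmod 2\pi\). For \(n\) large, \(|\delta_n|\) lies below the threshold above, so \(k_n:=\kappa_n+s(\delta_n)\) satisfies \(f(k_n)=0\) and \(|k_n-\kappa_n|\le 2C|\delta_n|/c\to0\), which is \eqref{eq:exact-zero-correction}. Then \(k_n\to\infty\) because \(\kappa_n=1+t_n\to\infty\). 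Also \(k_n\ell\bmod2\pi=\vartheta+\delta_n+s(\delta_n)\ell\to\vartheta\), which gives \eqref{eq:exact-eigenphase-return}.

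Finally, I will note that a zero of \(f\) at a phase in the regular chart means the vertex matrix \(M(k_n\ell)\) is singular. So \(k_n\) is an exact eigenfrequency of the fixed metric, which is what the subsequent argument needs.

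The main obstacle is the uniformity step: making sure the local defining function \(F\) (the eigenvalue branch) is genuinely \(C^1\) on a full ambient neighborhood of \(\vartheta\) in \(\mathbb T^7\), and not only on the secular hypersurface. This is needed so that the lower bound on \(\partial_s g\) holds uniformly in \(\delta\). I would handle it by using simplicity and the spectral gap: the zero eigenvalue of the symmetric analytic family \(M(\theta)\) is isolated from the other eigenvalues near \(\vartheta\), with gap exceeding \(0.329\) at \(\theta^\star\), and this persists on \(U\) by continuity. Analytic perturbation theory then makes \(\lambda_{\rm sec}\) analytic on a neighborhood, and its derivative in direction \(\ell\) at \(\vartheta\) equals \(\phi^*\partial M\phi/\|\phi\|^2\) by Hellmann–Feynman. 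This is positive by \cref{lem:radial-derivative-identity}.
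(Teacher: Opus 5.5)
Your proposal is correct and follows essentially the paper's argument: a uniform lower bound $c$ on the directional derivative $\nabla F\cdot\ell$ on a phase neighborhood of $\vartheta$, strict monotonicity of $s\mapsto F(\widetilde\theta_n+s\ell)$ from the lifted near-return, and a unique zero with $|s_n|\le c^{-1}|g_n(0)|\to0$. Your Lipschitz bound just sharpens this to $O(|\delta_n|)$, and your appeal to analytic perturbation theory and Hellmann--Feynman makes explicit the ambient smoothness of $\lambda_{\rm sec}$ that the paper takes for granted; the one slip is that, with $s_0=2C|\delta|/c$, the smallness condition should be $2C|\delta|/c<\rho$ rather than $C|\delta|/c<\rho$.
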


\begin{proof}
Since \(f'(1)\ne0\), there exist \(\delta>0\), \(c>0\), and a phase
neighborhood \(V\) of \(\vartheta\) such that the lift of \(F\) to the
corresponding local chart satisfies
\[
      \bigl|\nabla F(\theta)\cdot\ell\bigr|\ge c
\]
whenever \(\theta\in V\).
For large \(n\), the near-return
\(\Phi_\ell(\kappa_n)\) lies in \(V\).  Choose the unique local lift
\(\widetilde\theta_n\) of
\(\Phi_\ell(\kappa_n)\) converging to the chosen lift of \(\vartheta\).
For \(|s|\le\delta\),
\[
      \widetilde\theta_n+s\ell
\]
remains in the same chart after reducing modulo \(2\pi\), provided \(n\) is
large enough.  Define
\[
      g_n(s)
      :=
      F(\widetilde\theta_n+s\ell).
\]
Then,
\[
      g_n(0)\to F(\vartheta)=0,
\]
while
\[
      |g_n'(s)|
      =
      |\nabla F(\widetilde\theta_n+s\ell)\cdot\ell|
      \ge c.
\]
The derivative also has constant sign on this interval after shrinking the
chart.  Hence \(g_n\) is strictly monotone there.  The inverse-function
theorem yields a unique zero \(s_n\) with
\[
      |s_n|
      \le c^{-1}|g_n(0)|
      \longrightarrow0.
\]
Set
\[
      k_n:=\kappa_n+s_n.
\]
Then, \(f(k_n)=0\), proving
\eqref{eq:exact-zero-correction}; since \(\kappa_n\to\infty\), also
\(k_n\to\infty\), and the phase convergence follows.
\end{proof}

\begin{remark}[Orbit closure, not ambient density]
\label{rem:orbit-closure-not-density}
The recurrence argument takes place entirely in the compact orbit closure
\(H_\ell\).  Ambient density in \(\mathbb T^7\) is unnecessary.  What
converts recurrent near-returns into exact eigenfrequencies is the radial
transversality
\[
      \nabla F(\vartheta)\cdot\ell\ne0,
\]
which was enforced exactly by the integer lattice lift in
\cref{lem:positive-lattice-lift}.
\end{remark}

\subsection{Transfer of the separated intensity}
\label{subsec:transfer-separated-intensity}

Recall that the scalar edge energy and homogenized intensity are related by
\begin{equation}
      r_e=|A_e|^2+|B_e|^2=2q_e.
      \label{eq:r-two-q-global-bridge}
\end{equation}
Thus every separator sign proved for physical edge energies is unchanged on
homogenized intensities:
\[
      \widehat d\cdot r<0
      \quad\Longleftrightarrow\quad
      \widehat d\cdot q<0.
\]
The factor two also cancels from every normalized Rayleigh quotient, so the
normalization convention of
\(\mathcal S_G^{\rm bc}(\ell)\) is consistent with the high-frequency simplex
\(P_\ell\).

Let \(u_n\) be normalized eigenfunctions corresponding to the exact
frequencies \(k_n\).  Since the secular kernel is simple near
\(\vartheta\), its normalized kernel vector depends continuously on phase.
Hence the associated homogenized intensities satisfy
\begin{equation}
      q(u_n)\longrightarrow q(\vartheta).
      \label{eq:intensity-return}
\end{equation}
In particular,
\begin{equation}
      \widehat d\cdot q(u_n)\longrightarrow \widehat d\cdot q(\vartheta)<0.
      \label{eq:negative-exact-eigensequence}
\end{equation}

We now turn the separating functional into an admissible observation vector.
By the pushdown identity
\eqref{eq:separator-pushdown-identity}, every nonzero physical singular
generator has strictly positive \(\widehat d\)-value.  Since there are only
finitely many singular sectors, the minimum generalized Rayleigh quotient of
\(D_{\widehat d}\) against \(D_\ell\) over their kernel spaces is a
strictly positive number \(\gamma\).  Hence
\begin{equation}
      \widehat d\cdot s\ge\gamma\,\ell\cdot s
      \qquad
      \forall s\in\mathcal M_G^{\rm bc}.
      \label{eq:uniform-singular-separation}
\end{equation}
Keeping the fixed physical separator \(\widehat d\),
choose \(\varepsilon>0\) so that
\begin{equation}
      \alpha_e
      :=
      \frac12\ell_e+\varepsilon \widehat d_e
      \label{eq:observation-vector-alpha}
\end{equation}
satisfies
\begin{equation}
      0<\alpha_e<\ell_e
      \qquad
      \forall e.
      \label{eq:admissible-alpha}
\end{equation}
Such \(\alpha\) is realized by taking on each edge a measurable observation
subset of length \(\alpha_e\).
For every normalized singular-completion edge-energy vector \(s\) with
\(\ell\cdot s=1\),
\begin{equation}
      \alpha\cdot s
      =
      \frac12+\varepsilon\,\widehat d\cdot s
      \ge
      \frac12+\varepsilon\gamma.
      \label{eq:singular-above-half}
\end{equation}
Therefore,
\begin{equation}
      C_{\rm sing}(\alpha;\ell)
      >
      \frac12.
      \label{eq:Csing-above-half}
\end{equation}
On the exact high-frequency sequence,
\begin{equation}
 \frac{\alpha\cdot q(u_n)}{\ell\cdot q(u_n)}
 =
 \frac12
 +
 \varepsilon
 \frac{\widehat d\cdot q(u_n)}{\ell\cdot q(u_n)}.
 \label{eq:regular-ratio-alpha}
\end{equation}
After normalization \(\ell\cdot q(u_n)=1\), and
\eqref{eq:negative-exact-eigensequence} gives
\begin{equation}
      \lim_{n\to\infty}\alpha\cdot q(u_n)
      <
      \frac12.
      \label{eq:regular-below-half}
\end{equation}

\begin{theorem}[Fixed-metric high-frequency breakdown of singular completion]
\label{thm:fixed-metric-breakdown}
There exist
\begin{enumerate}
 \item a compact cycle-rank-two metric graph with standard Kirchhoff
       conditions at its internal vertices and Dirichlet conditions at its
       two exterior endpoints,
 \item a fixed metric \(\ell\in(0,\infty)^7\), and
 \item a measurable observation set \(\omega\subset G\),
\end{enumerate}
such that
\begin{equation}
      C_\infty(\omega;\ell)
      <
      C_{\rm sing}(\omega;\ell).
      \label{eq:main-fixed-metric-strict-gap}
\end{equation}
More precisely, the metric is obtained by a positive integer \(2\pi\)-lattice lift
of a separated regular secular phase point, for which the radial derivative identity
\eqref{eq:radial-derivative-energy-identity} makes the fixed radial phase
direction automatically transverse to the simple secular zero.  There is then an
exact eigensequence \(k_n\to\infty\) whose regular intensities converge to the
separated regular secular intensity of
Section~\ref{sec:certified-separation}.
\end{theorem}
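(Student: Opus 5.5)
The plan is to assemble the ingredients of Section~\ref{sec:fixed-metric-counterexample} in order. Take $G=G_\times$, the seven-edge crossed graph with Dirichlet stubs at $v_2,v_4$. Take $\theta^\star$ from \cref{thm:validated-scalar-secular-separation}. Shrinking to the separated regular patch $U$ gives \eqref{eq:uniform-negative-regular-patch}. Fix $\vartheta=\theta^\star$ (any point of $U$ works) and apply \cref{lem:positive-lattice-lift} to obtain $\ell=\vartheta+2\pi N\in(0,\infty)^7$. Then $k=1$ is a simple regular eigenfrequency, and by \cref{lem:radial-derivative-identity} the radial derivative of the simple secular branch equals $\sum_e\ell_er_e>0$.

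Next, \cref{lem:unbounded-torus-recurrence} gives near-returns $\kappa_n=1+t_n\to\infty$ with $\Phi_\ell(\kappa_n)\to\vartheta$. \Cref{lem:return-to-zero-correction} upgrades these to exact zeros $k_n$ of $f$, with $k_n\ell\bmod2\pi\to\vartheta$. Since the regular phase $k_n\ell\bmod 2\pi$ has a one-dimensional secular kernel, each $k_n$ is an eigenfrequency of $-\Delta_{G,\ell}$ whose eigenfunction $u_n$ is reconstructed from the kernel vector. Normalizing $\|u_n\|_{L^2}=1$ and using continuity of the kernel vector and the edge transfer relations gives $q(u_n)\to q(\vartheta)$, normalized so that $\ell\cdot q(\vartheta)=1$. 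Here \cref{lem:uniform-coefficient-bounds} ensures $\ell\cdot q(u_n)\to1$, so the normalizations match. Hence $q(\vartheta)\in I(\ell)$, and \eqref{eq:negative-exact-eigensequence} holds.

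For the observation set, use \eqref{eq:uniform-singular-separation}. By \cref{lem:exact-sectorwise-positivity} and the pushdown identity \eqref{eq:separator-pushdown-identity}, $\widehat d$ is strictly positive on every nonzero singular generator. Each sector's Rayleigh quotient $A^*D_{\widehat d}A/A^*D_\ell A$ is then minimized on a compact ellipsoid, which gives $\gamma>0$. Choose $\varepsilon>0$ small enough that $\alpha=\tfrac12\ell+\varepsilon\widehat d$ satisfies $0<\alpha_e<\ell_e$, and let $\omega\cap e$ be a subinterval of length $\alpha_e$. \Cref{prop:Csing-cone} together with \eqref{eq:singular-above-half} gives $C_{\rm sing}\ge\tfrac12+\varepsilon\gamma>\tfrac12$. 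On the other side, \cref{thm:exact-variational-principle} with $q(\vartheta)\in I(\ell)$ gives
\[
C_\infty(\omega;\ell)\le\alpha\cdot q(\vartheta)=\tfrac12+\varepsilon\,\widehat d\cdot q(\vartheta)<\tfrac12 .
\]
This yields \eqref{eq:main-fixed-metric-strict-gap} and the refined form \eqref{eq:intro-half-gap}.

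The main obstacle is the transfer step $q(u_n)\to q(\vartheta)$. It requires that the eigenspace at $k_n$ be spanned by the secular kernel vector, and that $(A_e,B_e)$ depend continuously on the phase. In the nonsingular vertex chart, $u_e$ on edge $e$ is determined by $\phi_v,\phi_w$ through $\csc\theta_e$ and $\cot\theta_e$, which are continuous near $\vartheta$ by the validated sine bounds. The eigenspace is exactly $\ker M(k_n\ell)$, which is one-dimensional for large $n$ because nullity is upper semicontinuous and nonzero there. I would also verify that the Dirichlet stubs contribute no additional eigenfunctions vanishing at all vertices. That holds because such a function would require $\sin\theta_e=0$, which is excluded on $U$.
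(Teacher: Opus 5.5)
Your proposal is correct and follows the paper's proof essentially step for step: the lattice lift $\ell=\vartheta+2\pi N$, radial transversality from the derivative identity, orbit-closure recurrence followed by the transverse zero correction, and then $\alpha=\tfrac12\ell+\varepsilon\widehat d$ with a uniform singular margin $\gamma>0$. Your additional checks fill in details the paper leaves implicit, namely that the eigenspace at $k_n$ is exactly the one-dimensional secular kernel (no vertex-vanishing eigenfunctions, since every $\sin\theta_e\neq0$) and that $\ell\cdot q(u_n)\to1$ reconciles the $L^2$ and $P_\ell$ normalizations.
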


\begin{proof}
Choose the metric by
\cref{lem:positive-lattice-lift} and construct the exact recurrent
eigensequence by
\cref{lem:unbounded-torus-recurrence,lem:return-to-zero-correction}.  The exact eigensequence then satisfies
\eqref{eq:intensity-return}.  Choose the observation vector by
\eqref{eq:observation-vector-alpha}.  Equations
\eqref{eq:Csing-above-half} and \eqref{eq:regular-below-half} yield
\[
      C_\infty(\omega;\ell)
      \le
      \lim_{n\to\infty}
      \alpha\cdot q(u_n)
      <
      \frac12
      <
      C_{\rm sing}(\omega;\ell),
\]
which proves \eqref{eq:main-fixed-metric-strict-gap}.
\end{proof}

\begin{corollary}[Scope of the singular-completion principle]
\label{cor:sharp-scope-singular-completion}
The boundary-aware singular-completion principle provides a valid lower bound
for regular secular observation ratios in each of the low-complexity classes
treated in Section~\ref{sec:low-complexity-sufficiency}: trees, unicyclic
graphs, and closed graphs of cycle rank two. It is not, however, a universal
description of the regular secular geometry of compact quantum graphs. The
counterexample constructed below has cycle rank two and two exterior
Dirichlet leaves and satisfies
\[
    \mathcal R_G^{\rm reg}
    \not\subseteq
    \mathcal M_G^{\rm bc}.
\]
No minimality assertion is made concerning the number of effective terminals,
exterior leaves, or exterior flux modes.
\end{corollary}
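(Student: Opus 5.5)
The corollary has two parts: a positive statement for the low-complexity classes, and the failure of the inclusion for the crossed graph. I will prove them separately, assembling results already established rather than doing new analysis.

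\emph{Positive part.} Here I invoke \cref{cor:low-complexity-singular-sufficiency}. It gives \(\mathcal R_G^{\rm reg}\subseteq\mathcal M_G^{\rm bc}\) for trees, unicyclic graphs and closed graphs with \(\beta_1(G)=2\). Take a nonzero regular edge-energy vector \(r\) and normalize it to \(r/(\ell\cdot r)\). This normalized vector lies in the slice \(\mathcal S_G^{\rm bc}(\ell)\), so \cref{prop:singular-domination} together with \cref{prop:Csing-cone} yields
\[
\frac{\alpha\cdot r}{\ell\cdot r}\ge C_{\rm sing}(\alpha;\ell)
\]
for every \(\alpha\ge0\). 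This is exactly the claimed lower bound for regular secular observation ratios. I would also record that the factor \(2\) relating \(r\) and \(q\) cancels in the quotient, so the bound applies equally to homogenized intensities.

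\emph{Negative part.} For the seven-edge crossed graph \(G_\times\) of \cref{fig:crossed-graph-cut-tree}, I invoke \cref{cor:actual-regular-singular-separation}. That result gives a regular secular point \(\theta^\star\) whose energy \(r(\theta^\star)\) lies in \(\mathcal R_G^{\rm reg}\) but not in \(\mathcal M_G^{\rm bc}\), so the inclusion fails. The regularity of \(\theta^\star\) is certified by the Krawczyk inclusion \cref{lem:strict-krawczyk-inclusion}, the sine bounds, and \cref{lem:validated-rank-three-secular}. The separation uses \(\widehat d\cdot r(\theta^\star)<0\) from \eqref{eq:validated-negative-separator} and \eqref{eq:separator-pushdown-identity}, against strict positivity of \(\widehat d\) on every singular generator via \cref{lem:exact-sectorwise-positivity}. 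I would then check the hypotheses in the statement: \(G_\times\) has \(\beta_1=7-6+1=2\) and two Dirichlet leaves, at \(v_2\) and \(v_4\). Because it has exterior vertices, it lies outside the closed rank-two class, which is consistent with \cref{rem:ranktwo-closedness-matters}.

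I would add the remark that \cref{thm:fixed-metric-breakdown} makes this failure visible at the level of \(C_\infty\) and not only in secular geometry. The minimality disclaimer needs no proof, since it asserts nothing.

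The one step that needs care is justifying that every generator of \(\mathcal M_G^{\rm bc}\) on \(G_\times\), after the two chords are cut, yields a terminal vector lying in one of the four parity subspaces \(\mathcal P_{\varepsilon_1,\varepsilon_2}\cap{\bf1}^\perp\). The subtlety is the Dirichlet stubs. They do not add Kirchhoff rows (\cref{rem:where-bc-enters}), so their amplitudes are exactly the free terminal coordinates \(x_{0_1},x_{0_2}\). Terminal conservation then comes from summing the internal rows of \(B_{\sigma,\mathrm{bc}}\). After switching on the spanning tree, as in \cref{lem:ranktwo-joint-closure-sectors}, each chord contributes a relation \(x_{i^+}=\pm x_{i^-}\), and \(\mathfrak q_d\) reproduces \(\widehat d\cdot A^{\circ2}\) through the tree cut sums. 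I would verify this directly, using the parametrization \eqref{eq:parity-basis-parameterization} and the pushdown \(\mathcal P\).
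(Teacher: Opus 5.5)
Your proposal is correct and follows the paper's own route: the corollary has no separate proof in the paper and is obtained by combining \cref{cor:low-complexity-singular-sufficiency} (with \cref{prop:singular-domination}) for the positive part and \cref{cor:actual-regular-singular-separation} for the failure of \(\mathcal R_G^{\rm reg}\subseteq\mathcal M_G^{\rm bc}\) on the seven-edge crossed graph. Your additional check is sound. It shows that every nonzero singular generator, after cutting the two chords and switching along the spanning tree, gives a nonzero terminal vector \(x\in\mathcal P_{\varepsilon_1,\varepsilon_2}\cap\mathbf{1}^\perp\) with \(\mathfrak q_d(x)=\widehat d\cdot A^{\circ2}\); the switching must come before the row summation so that the tree columns cancel. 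This check spells out the one-line assertion in the paper's proof of \cref{cor:actual-regular-singular-separation}.
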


\begin{remark}[Logical structure of the counterexample]
\label{rem:logical-endpoint}
The construction separates three logically distinct ingredients. First, the
six-terminal covariance relaxation admits an exact finite-dimensional
separator between the projected regular and parity covariance cones. Second,
this separated region contains the edge-energy vector of a genuine regular
scalar secular state. Third, a fixed metric is chosen for which recurrence
of the spectral orbit, together with radial transversality, produces an exact
high-frequency eigensequence approaching that regular secular state.

These three ingredients are established successively in
Sections~\ref{sec:six-terminal-obstruction}--%
\ref{sec:fixed-metric-counterexample}. Neither the covariance-level
separation alone nor the existence of a single regular secular point is
sufficient for the fixed-metric high-frequency conclusion.
\end{remark}

\section*{Declarations}

\paragraph{Funding.}
No funds, grants, or other support was received.

\paragraph{Competing interests.}
The author declares no competing interests that are relevant to the content
of this article.

\paragraph{Data availability.}
No empirical datasets were generated or analyzed in this study.  All
mathematical data used in the validated finite-dimensional certificate are
reported in the manuscript.

\paragraph{Code availability.}
An exact-arithmetic verification script for the Krawczyk certificate is
provided as supplementary material.  It reproduces the rational interval
bounds used in the proof from the exact data reported in the manuscript. Relevant scripts can be found at the Github repository \href{https://github.com/aisia-research-lab/quantum-graph-krawczyk-certificate}{https://github.com/aisia-research-lab/quantum-graph-krawczyk-certificate}.

\paragraph{Author contributions.}
Binh T. Nguyen is the sole author and is responsible for the conception,
mathematical analysis, validation, and preparation of the manuscript.

\begingroup
\small

\endgroup

\appendix

\section{Exact data for the Krawczyk validation}
\label{app:krawczyk-certificate}

This appendix records the exact rational data used in the validated
Krawczyk argument of Section~\ref{subsec:krawczyk-certification}. All
terminating decimals appearing there are interpreted as exact rational
numbers.
The center of the validation box is
\[
\resizebox{0.63\textwidth}{!}{$
\displaystyle
\bar y=\left(
\begin{array}{rrr}
\frac{15903993709}{500000000000},&
-\frac{15903993709}{500000000000},&
-\frac{31808123711}{10^{12}},\\[6pt]
-\frac{152448918917}{10^{12}},&
\frac{36855658619}{200000000000},&
\frac{18716493}{10^8}
\end{array}
\right)
$}.
\]
and
\[
X=\bar y+[-10^{-7},10^{-7}]^6.
\]
The exact rational preconditioner \(A=(a_{ij})\) used in the Krawczyk
operator is specified row by row by
\begin{equation}
\resizebox{0.98\textwidth}{!}{$
\begin{aligned}
A_{1,\bullet}&=\left(
0,0,-\frac{15719322113321}{10^{12}},
-\frac{1058088015467}{250000000000},
0,\frac{53849040453}{200000000000}
\right),\\
A_{2,\bullet}&=\left(
0,0,\frac{15719322113321}{10^{12}},
\frac{1058088015467}{250000000000},
0,\frac{146150959547}{200000000000}
\right),\\
A_{3,\bullet}&=\left(
\frac{14385276191607}{500000000000},
\frac{11900586523901}{500000000000},
-\frac{36852470672613}{10^{12}},
-\frac{992235092021}{10^{11}},
-\frac{8081918289399}{10^{12}},
-\frac{1713185820499}{10^{12}}
\right),\\
A_{4,\bullet}&=\left(
\frac{655957423053}{200000000000},
0,
-\frac{655957423053}{200000000000},
-\frac{176613389047}{200000000000},
0,
-\frac{76234845013}{500000000000}
\right),\\
A_{5,\bullet}&=\left(
\frac{4966061243943}{10^{12}},
\frac{348753307367}{250000000000},
-\frac{1590268618353}{250000000000},
-\frac{107043048951}{62500000000},
-\frac{348753307367}{250000000000},
-\frac{36963948387}{125000000000}
\right),\\
A_{6,\bullet}&=\left(
0,0,0,
\frac{359636287451}{500000000000},
0,
-\frac{1830289041}{40000000000}
\right).
\end{aligned}
$}
\label{eq:krawczyk-exact-A}
\end{equation}
The polynomial system \(F\) is given in
Section~\ref{subsec:krawczyk-certification}. Its Jacobian is affine, so the
interval matrix \(J(X)\) is obtained by exact rational endpoint arithmetic.
With the preconditioner \eqref{eq:krawczyk-exact-A}, exact evaluation gives
\[
    \|AF(\bar y)\|_\infty
    <
    4.71\times10^{-13},
\]
and the Krawczyk image is strictly contained in the interior of \(X\), with
the coordinate-radius bounds recorded in
\eqref{eq:krawczyk-radius-ratios}. The same rational box yields the
separator, phase-nonsingularity, and secular-minor enclosures in
\eqref{eq:validated-negative-separator},
\eqref{eq:validated-internal-sines},
\eqref{eq:validated-exterior-sines}, and
\eqref{eq:validated-secular-minor}.
The resulting uniqueness statement concerns only the six-dimensional sliced
system inside \(X\); no global uniqueness assertion for the scalar secular
state is made.


\begin{thebibliography}{21}

\bibitem{AmmariDucaJolyLeBalch2025}
Ka{\"i}s Ammari, Alessandro Duca, Romain Joly, and K{\'e}vin Le~Balc'h.
\newblock The graph geometric control condition.
\newblock arXiv:2503.18864, 2025.

\bibitem{AnantharamanIngremeauSabriWinn2021}
Nalini Anantharaman, Maxime Ingremeau, Mostafa Sabri, and Brian Winn.
\newblock Quantum ergodicity for expanding quantum graphs in the regime of spectral delocalization.
\newblock {\em Journal de Math{\'e}matiques Pures et Appliqu{\'e}es}, 151:28--98, 2021.
\newblock doi:10.1016/j.matpur.2021.04.012.

\bibitem{BerkolaikoKuchment2013}
Gregory Berkolaiko and Peter Kuchment.
\newblock {\em Introduction to Quantum Graphs}.
\newblock Mathematical Surveys and Monographs, vol.~186. American Mathematical Society, Providence, RI, 2013.

\bibitem{BerkolaikoLiu2017}
Gregory Berkolaiko and Wen Liu.
\newblock Simplicity of eigenvalues and non-vanishing of eigenfunctions of a quantum graph.
\newblock {\em Journal of Mathematical Analysis and Applications}, 445(1):803--818, 2017.
\newblock doi:10.1016/j.jmaa.2016.07.026.

\bibitem{BerkolaikoWinn2018}
Gregory Berkolaiko and Brian Winn.
\newblock Maximal scarring for eigenfunctions of quantum graphs.
\newblock {\em Nonlinearity}, 31(10):4812--4850, 2018.
\newblock doi:10.1088/1361-6544/aad3fe.

\bibitem{CdV2015}
Yves Colin de~Verdi{\`e}re.
\newblock Semi-classical measures on quantum graphs and the Gau{\ss} map of the determinant manifold.
\newblock {\em Annales Henri Poincar{\'e}}, 16(2):347--364, 2015.
\newblock doi:10.1007/s00023-014-0326-4.

\bibitem{Duca2020}
Alessandro Duca.
\newblock Global exact controllability of bilinear quantum systems on compact graphs and energetic controllability.
\newblock {\em SIAM Journal on Control and Optimization}, 58(6):3092--3129, 2020.
\newblock doi:10.1137/18M1212768.

\bibitem{EgidiMugnoloSeelmann2024}
Michela Egidi, Delio Mugnolo, and Albrecht Seelmann.
\newblock Sturm--Liouville problems and global bounds by small control sets and applications to quantum graphs.
\newblock {\em Journal of Mathematical Analysis and Applications}, 535(1):128101, 2024.
\newblock doi:10.1016/j.jmaa.2024.128101.

\bibitem{GnutzmannKeatingPiotet2008}
Sven Gnutzmann, Jonathan P. Keating, and Fabien Piotet.
\newblock Quantum ergodicity on graphs.
\newblock {\em Physical Review Letters}, 101:264102, 2008.
\newblock doi:10.1103/PhysRevLett.101.264102.

\bibitem{GnutzmannKeatingPiotet2010}
Sven Gnutzmann, Jonathan P. Keating, and Fabien Piotet.
\newblock Eigenfunction statistics on quantum graphs.
\newblock {\em Annals of Physics}, 325(12):2595--2640, 2010.
\newblock doi:10.1016/j.aop.2010.07.001.

\bibitem{GnutzmannSmilansky2006}
Sven Gnutzmann and Uzy Smilansky.
\newblock Quantum graphs: Applications to quantum chaos and universal spectral statistics.
\newblock {\em Advances in Physics}, 55(5--6):527--625, 2006.
\newblock doi:10.1080/00018730600908042.

\bibitem{HarrellMaltsev2018}
Evans M. Harrell II and Anna V. Maltsev.
\newblock On Agmon metrics and exponential localization for quantum graphs.
\newblock {\em Communications in Mathematical Physics}, 359(2):429--448, 2018.
\newblock doi:10.1007/s00220-018-3124-x.

\bibitem{HarrellMaltsev2020}
Evans M. Harrell II and Anna V. Maltsev.
\newblock Localization and landscape functions on quantum graphs.
\newblock {\em Transactions of the American Mathematical Society}, 373(3):1701--1729, 2020.
\newblock doi:10.1090/tran/7908.

\bibitem{HarrellMaltsev2024}
Evans M. Harrell II and Anna V. Maltsev.
\newblock On topological bound states and secular equations for quantum-graph eigenvalues.
\newblock {\em Journal of Spectral Theory}, 14(2):619--639, 2024.
\newblock doi:10.4171/JST/507.

\bibitem{IngremeauSabriWinn2020}
Maxime Ingremeau, Mostafa Sabri, and Brian Winn.
\newblock Quantum ergodicity for large equilateral quantum graphs.
\newblock {\em Journal of the London Mathematical Society}, 101(1):82--109, 2020.
\newblock doi:10.1112/jlms.12259.

\bibitem{KostrykinSchrader1999}
Vadim Kostrykin and Robert Schrader.
\newblock Kirchhoff's rule for quantum wires.
\newblock {\em Journal of Physics A: Mathematical and General}, 32(4):595--630, 1999.
\newblock doi:10.1088/0305-4470/32/4/006.

\bibitem{KottosSmilansky1999}
Tsampikos Kottos and Uzy Smilansky.
\newblock Periodic orbit theory and spectral statistics for quantum graphs.
\newblock {\em Annals of Physics}, 274(1):76--124, 1999.
\newblock doi:10.1006/aphy.1999.5904.

\bibitem{KravitzBrioCaputo2023}
Hannah Kravitz, Moysey Brio, and Jean-Guy Caputo.
\newblock Localized eigenvectors on metric graphs.
\newblock {\em Mathematics and Computers in Simulation}, 214:352--372, 2023.
\newblock doi:10.1016/j.matcom.2023.07.011.

\bibitem{Krawczyk1969}
Rudolf Krawczyk.
\newblock Newton-Algorithmen zur Bestimmung von Nullstellen mit Fehlerschranken.
\newblock {\em Computing}, 4:187--201, 1969.
\newblock doi:10.1007/BF02234767.

\bibitem{Kuchment2004}
Peter Kuchment.
\newblock Quantum graphs: I. Some basic structures.
\newblock {\em Waves in Random Media}, 14(1):S107--S128, 2004.
\newblock doi:10.1088/0959-7174/14/1/014.

\bibitem{PlumerTaufer2021}
Marvin Pl{\"u}mer and Matthias T{\"a}ufer.
\newblock On fully supported eigenfunctions of quantum graphs.
\newblock {\em Letters in Mathematical Physics}, 111:153, 2021.
\newblock doi:10.1007/s11005-021-01489-9.

\end{thebibliography}
\end{document}